\documentclass[11pt]{article}
\usepackage[margin=1in]{geometry}
\usepackage{amsmath,amssymb,amsthm,mathtools}
\usepackage{microtype}
\usepackage{enumitem}
\usepackage{hyperref}
\hypersetup{hidelinks}

\newtheorem{theorem}{Theorem}[section]
\newtheorem{lemma}[theorem]{Lemma}
\newtheorem{claim}[theorem]{Claim}
\newtheorem{remark}[theorem]{Remark}
\newtheorem{corollary}[theorem]{Corollary}
\theoremstyle{definition}
\newtheorem{definition}[theorem]{Definition}
\usepackage{tikz}
\usepackage{xcolor}
\usepackage{graphicx}
\usetikzlibrary{calc,decorations.pathreplacing,fit,positioning}
\newcommand{\ol}[1]{\overline{#1}}
\usetikzlibrary{arrows.meta,calc,decorations.pathreplacing,decorations.markings,backgrounds}

\begin{document}

\title{An Optimal Bound for Ramsey Goodness of Cycles}
\author{Peiru Kuang\footnote{School of Mathematical Sciences, Shanghai Jiao Tong University, Shanghai 200240, China. Email: peiru\_k@sjtu.edu.cn}
\and
Yan Wang\footnote{School of Mathematical Sciences, Shanghai Jiao Tong University, Shanghai 200240, China. Supported by National Key R\&D Program of China under Grant No. 2022YFA1006400 and National Natural Science Foundation of China under Grant No. 12571376. Email: yan.w@sjtu.edu.cn (corresponding author).}}
\date{}
\maketitle
\begin{abstract}
For graphs $F$ and $H$, the Ramsey number $R(F,H)$ is the minimum integer $N$ such that every $N$-vertex graph contains $F$ or its complement contains $H$. Let $\sigma(H)$ denote the minimum order of a color class in a proper $\chi(H)$-coloring of $H$. If $F$ is connected and $|F|\ge\sigma(H)$, a construction of Burr gives $R(F,H)\ge(\chi(H)-1)(|F|-1)+\sigma(H)$. Burr proved that this bound is attained for $F=C_n$ when $n$ is sufficiently large. Allen, Brightwell and Skokan conjectured that equality holds whenever $n\geq |H| \chi(H)$, while Haslegrave, Hyde, Kim and Liu subsequently proved it whenever $n\ge C|H|\log^4\chi(H)$.

Pokrovskiy and Sudakov conjectured that the optimal linear condition $n\geq C|H|$ suffices; this conjecture was also highlighted by Montgomery in his 2026 ICM survey (see Conjecture 9.2). In this paper, we resolve this conjecture by proving that there is an absolute constant $C>0$ such that $R(C_n,H)=(\chi(H)-1)(n-1)+\sigma(H)$ for every nonempty graph $H$ and every $n\ge C|H|$. This gives the first bound linear in $|H|$, and is best possible up to a constant factor. Our proof builds on the framework of Haslegrave, Hyde, Kim and Liu, and combines some new ideas in expansion and switching cycle lengths.
\end{abstract}

\section{Introduction}
For graphs \(F\) and \(H\), a celebrated result of Ramsey~\cite{Ramsey} from 1930 says that there exists an integer \(R(F,H)\) such that every graph \(G\) on \(R(F,H)\) vertices either contains a copy of \(F\), or its complement \(\ol G\) contains a copy of \(H\). Ramsey theory is a central topic of research in combinatorics, with rich applications in number theory, information theory and theoretical computer science, etc. Determining the exact value of \(R(F,H)\) is notoriously difficult even for very simple graphs; indeed, \(R(K_5,K_5)\) is still unknown; see the survey of Conlon, Fox and Sudakov~\cite{CFS} for more examples.

Ramsey theory has witnessed several major breakthroughs in recent years,
even for the classical problem of determining clique Ramsey numbers.
Campos, Griffiths, Morris and Sahasrabudhe~\cite{CGMS} obtained the first
exponential improvement over the Erd\H{o}s--Szekeres upper bound for the
diagonal Ramsey numbers, proving that
\(R(K_k,K_k)\le(4-\varepsilon)^k\) for some \(\varepsilon>0\).
Balister, Bollob\'as, Campos, Griffiths, Hurley, Morris, Sahasrabudhe and
Tiba~\cite{BBCGHMST} subsequently obtained an exponential improvement in
the multicolor setting. On the lower-bound side, Ma, Shen and
Xie~\cite{MSX} obtained the first exponential improvement over the
classical probabilistic lower bound of Erd\H{o}s for
\(R(K_\ell,K_{C\ell})\) for every fixed \(C>1\).
More recently, Brada\v{c}~\cite{Bradac} proved that
\(R(K_s,K_k)\ge k^{s-2+o(1)}\) for every fixed \(s\ge4\), and more
generally determined the exponent asymptotically in a broad off-diagonal
range. See also the recent survey of Morris in his 2026 ICM lecture~\cite{MorrisICM}.

Let \(\chi(H)\) be the chromatic number of \(H\), and let
\(\sigma(H)\) be the minimum order of a color class in a proper
\(\chi(H)\)-coloring of \(H\). Building on observations of
Erd\H{o}s~\cite{Erdos1947} and Chv\'atal and Harary~\cite{CH},
Burr~\cite{Burr} proved that, whenever \(F\) is connected and
\(|F|\ge\sigma(H)\),
\begin{equation}\label{eq:burr-intro}
R(F,H)\ge(\chi(H)-1)(|F|-1)+\sigma(H).
\end{equation}
His construction consists of \(\chi(H)-1\) disjoint cliques of order
\(|F|-1\) and one disjoint clique of order \(\sigma(H)-1\). 
Following Burr and Erd\H{o}s~\cite{BurrErdos}, a graph \(F\) is called
\emph{\(H\)-good} if equality holds in \eqref{eq:burr-intro}.

Several classical exact Ramsey results may be viewed as goodness
statements. Erd\H{o}s~\cite{Erdos1947} proved
that paths are good versus complete graphs, while
Gerencs\'er and Gy\'arf\'as~\cite{GG} determined the Ramsey numbers of
two paths. Chv\'atal~\cite{Chvatal} then extended this to all trees, proving that every tree is good versus every complete graph. These results suggested that \eqref{eq:burr-intro} is often attained when the first graph is sparse. Later, Burr and Erd\H{o}s~\cite{BurrErdos} initiated the systematic study of Ramsey goodness. 
A general structural result was
obtained by Allen, Brightwell and Skokan~\cite{ABS}: For every fixed
graph \(H\) and integer \(\Delta\), every sufficiently large connected
graph \(F\) with maximum degree at most \(\Delta\) and sufficiently
small bandwidth is \(H\)-good. 
Moreover, Chv\'atal, R\"odl, Szemer\'edi and Trotter~\cite{CRST} established linear Ramsey bounds for graphs of bounded maximum degree. Lee~\cite{Lee} later extended this result to
graphs of bounded degeneracy, thereby resolving a longstanding conjecture of Burr and Erd\H{o}s~\cite{BurrErdosMagnitude}.

Besides these general structural results, sharper quantitative and
exact results were obtained for some families of sparse and
structured graphs. Erd\H{o}s, Faudree, Rousseau and
Schelp~\cite{EFRS85} obtained results on Ramsey numbers involving
complete multipartite graphs versus sparse graphs. Pokrovskiy and
Sudakov~\cite{PSPaths} proved that \(P_n\) is \(H\)-good whenever
\(n\ge4|H|\). Balla, Pokrovskiy and Sudakov~\cite{BPS} extended this to bounded-degree trees. More precisely, for every
fixed \(\Delta\) and \(k\), they proved that there exists a constant
\(C_{\Delta,k}\) such that every \(n\)-vertex tree \(T\) with
\(\Delta(T)\le\Delta\) is \(H\)-good for every graph \(H\) with
\(\chi(H)=k\), provided
$
 n\ge C_{\Delta,k}|H|\log^4|H|.
$
For fixed \(\Delta\) and \(\chi(H)\), Montgomery, Pavez-Sign\'e and
Yan~\cite{MPY} subsequently removed the logarithmic factor and obtained
a linear bound. Further results on Ramsey numbers for sparse and
structured graphs may be found in \cite{CFLS,FPGMSS,FHW,NikiforovRousseau}.

The case in which the first graph is a cycle has a particularly rich
history. Bondy and Erd\H{o}s~\cite{BondyErdos}, Faudree and
Schelp~\cite{FaudreeSchelp}, and Rosta~\cite{RostaI,RostaII} determined the
Ramsey numbers of two cycles. In particular,
$
R(C_n,C_k)=2n-1
$
whenever \(n\ge k\ge3\), \(k\) is odd, and
\((n,k)\ne(3,3)\). Since \(\chi(C_k)=3\) and \(\sigma(C_k)=1\) for
odd \(k\), this is precisely the statement that \(C_n\) is
\(C_k\)-good.
The corresponding problem for cycles versus complete graphs was initiated by Bondy
and Erd\H{o}s~\cite{BondyErdos}, who proved that \(C_n\) is
\(K_k\)-good whenever \(n\ge k^2-2\). More exact values were
subsequently determined for small complete graphs. Erd\H{o}s,
Faudree, Rousseau and Schelp~\cite{EFRS} conjectured that the natural
condition \(n\ge k\ge3\) already suffices. Nikiforov~\cite{Nikiforov}
proved that \(R(C_n,K_k)=(n-1)(k-1)+1\) whenever \(n\ge4k+2\). Keevash, Long and
Skokan~\cite{KLS} later showed that, for large \(k\), it holds under
the substantially weaker condition
\(n\ge C\log k/\log\log k\), where \(C\) is an absolute constant.
They also showed that this range is best possible up to the value of
\(C\).

These results lead naturally to the problem of determining when
\(C_n\) is $H$-good for an arbitrary graph \(H\).
Burr~\cite{Burr} showed that, for every fixed graph \(H\), the cycle \(C_n\) is \(H\)-good for all sufficiently large \(n\). Allen, Brightwell and Skokan~\cite{ABS} conjectured the explicit bound
\(n\ge\chi(H)|H|\). Pokrovskiy and Sudakov~\cite{PS} obtained a linear
bound under the additional assumption
\(\sigma(H)\ge\chi(H)^{22}\), and conjectured that there is an absolute
constant \(C\) such that \(C_n\) is \(H\)-good whenever
\(n\ge C|H|\). This conjecture was also highlighted by Montgomery in
his 2026 ICM survey (see Conjecture 9.2 in~\cite{MontgomeryICM}). Haslegrave, Hyde, Kim and Liu~\cite{HHKL} subsequently proved that \(C_n\) is \(H\)-good for
every graph \(H\) whenever
$
n\ge C|H|\log^4\chi(H).
$
They further asked whether the logarithmic
factor could be removed.
%They also mentioned that it would be interesting to at least get rid of the logarithmic factor and obtain a bound linear in $|H|$.

In this paper, we prove the conjecture of Pokrovskiy and Sudakov.
\begin{theorem}\label{thm:main}
There exists an absolute constant $C>0$ such that, for every nonempty graph
$H$ and every integer $n\ge C|H|$, the cycle $C_n$ is $H$-good; that is,
$
 R(C_n,H)=(\chi(H)-1)(n-1)+\sigma(H).
$
\end{theorem}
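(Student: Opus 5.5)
The lower bound is Burr's construction \eqref{eq:burr-intro}, since $C_n$ is connected and $n\ge C|H|\ge\sigma(H)$. It remains to prove the upper bound. Write $k=\chi(H)$, $h=|H|$ and $N=(k-1)(n-1)+\sigma(H)$. Let $G$ be a graph on $N$ vertices with $C_n\not\subseteq G$. We must find $H$ in $\ol G$. The plan follows the framework of Haslegrave, Hyde, Kim and Liu and argues by induction on $k$. The case $k=1$ is trivial. For $k=2$ we use the known results for bipartite $H$; these also follow from the path-goodness of Pokrovskiy and Sudakov together with a rotation argument.

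\textbf{Step 1 (expanders contain all long cycles).} Call a vertex set $U$ an \emph{$(m,d)$-expander} if every $X\subseteq U$ with $|X|\le m$ satisfies $|N_G(X)\cap U|\ge d|X|$. We prove a lemma: if $G[U]$ is such an expander with $m=\Theta(h)$ and $|U|\ge n$, then $G[U]$ contains cycles of every length in an interval $[\ell_0,|U|-O(h)]$ with $\ell_0=O(h)$. This uses Pósa rotation--extension to get a long cycle. We then perform ``length switching'' with short absorbing gadgets, namely pairs of paths whose lengths differ by one, which are found inside the expander. This lets the cycle length be adjusted one step at a time down to exactly $n$. This is where the $\log^4 k$ loss must be eliminated. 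Previously the expansion factor needed to depend on $k$ because the sets $X$ of size up to $h$ had to be handled across $k$ parts. We aim instead for constant expansion $d$ on sets of size $O(h)$, and we pay for the length-switching gadgets with a linear number $O(h)$ of vertices.

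\textbf{Step 2 (decomposition of $G$).} Greedily remove from $G$ vertex sets $X$ with $|X|\le h$ and small neighbourhood. The union of the removed sets induces in $\ol G$ a structure rich in large complete bipartite pieces. If the removed part reaches size $\Omega(h)$, we directly find large independent sets in $G$, hence cliques in $\ol G$, which can host colour classes of $H$. Otherwise the remainder splits into components $U_1,\dots,U_t$, each an $(\Theta(h),d)$-expander, with few edges between different components. By Step 1 each component of order at least $n+O(h)$ would contain $C_n$. So every component has order less than $n+O(h)$. We also show that a component of order in $[n-O(h)$, $n+O(h)]$ must, by Step 1, have order at most $n-1$ or else contain $C_n$ exactly.

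\textbf{Step 3 (embedding $H$ into $\ol G$).} Now $G$ is essentially a disjoint union of pieces, each of order at most $n-1$, with only $O(h)$ vertices in junk. The total order is $N=(k-1)(n-1)+\sigma(H)$. Partition the pieces into $k$ groups so that each group is almost complete to the others in $\ol G$. This is possible because the pieces have few edges between them: each has at most $d|X|$ neighbours outside itself for small $X$. Take a proper colouring of $H$ with classes $V_1,\dots,V_k$ where $|V_k|=\sigma(H)$. Embed the classes of $H$ greedily into distinct groups, using the non-edges between groups. Counting shows that some $k-1$ groups have size at least $h$ and the last has at least $\sigma(H)$ vertices, in the pigeonhole manner of Burr's argument. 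The few vertices of degree $\Omega(h)$ across groups are avoided when embedding. Because $n\ge Ch$, each class needs only $h\ll n$ vertices. Within a group, the vertices used for one colour class must be independent in $G$. We get these from the complement being dense inside a piece, since a piece of order less than $n$ with no $C_n$ and bounded expansion has a large independent set by a degree argument. If that fails, we recurse using the $k-1$ case on a smaller host.

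\textbf{Main obstacle.} The crux is Step 1 with expansion $d$ independent of $k$ together with exact length control. We need to show that sublinear-in-$k$ expansion still yields a cycle of length exactly $n$, using only $O(h)$ vertices of slack. We would first try Montgomery-style sorting networks or absorbers built from many disjoint short paths of two consecutive lengths. These would be found via the small-set expansion, and the extension procedure would be bounded by a linear budget rather than a polylogarithmic one.
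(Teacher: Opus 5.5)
\textbf{There is a genuine gap: Step~1 is false as stated, Step~2 depends on it, and the exact-length step is left open.}

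\textbf{Step~1 fails, and with it Step~2.} For $k\ge3$, let $G$ consist of cliques $Q_1,\dots,Q_{k-1}$ of order $n-1$, with a single edge between $Q_i$ and $Q_{i+1}$.
\begin{itemize}
\item Every $X$ with $|X|\le m=\Theta(h)$ meets some $Q_i$, so $|N_G(X)|\ge n-1-m\ge d|X|$.
\item $G$ is connected of order $(k-1)(n-1)\ge n+O(h)$.
\item $\ol G$ is a subgraph of the complete $(k-1)$-partite graph with parts $Q_i$, so $\ol G$ is $H$-free.
\item Every cycle lies inside some $Q_i$, so $G$ has no $C_n$.
\end{itemize}
Bipartite expanders also show that small-set expansion does not even give odd cycles.

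So the property that must drive the argument is expansion of sets up to half the order. This cannot be assumed; it must be extracted from $\ol G$ being $H$-free. The paper does this by noting that a set $S$ with few neighbours is anticomplete to $V(G)\setminus(S\cup N_G(S))$. The forbidden multipartite graph therefore splits across the cut, and one passes to a side whose complement avoids a part-subgraph with fewer parts (Lemma~\ref{lem:expanding-subgraph}; the expansion rate depends on $|S|$ and on the number of remaining parts).

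Globally, the paper needs the stability dichotomy of Lemma~\ref{lem:stability}. Either there is an induced subgraph of order $(k-2)(n-1)+m_1$ whose complement is $\widehat H$-free; the part of order $m_2$ is deleted so that $\sigma$ stays $m_1$ and induction applies. Or there are $k-1$ pairwise anticomplete, internally highly connected sets of order between $0.95n$ and $n+m_1$; the upper bound comes from the exact result $R(C_n,K_{a,b})=n+a-1$. In the second case one must either build $C_n$ \emph{across} two of these sets, when their expanding cores are joined by two disjoint paths, or use a cutvertex separation and Lemma~\ref{lem:PS-path}. Your Step~2 never confronts either situation, and the graph above is exactly the cutvertex case.

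Step~2 also has further errors:
\begin{itemize}
\item A set with few neighbours yields a complete bipartite graph in $\ol G$, not an independent set in $G$; disjoint cliques are the extremal graphs.
\item Colour classes of $H$ need not be independent in $G$.
\item The claim that components of order in $[n,n+O(h)]$ contain $C_n$ does not follow even from your Step~1, whose interval stops at $|U|-O(h)$.
\item Since the target bound is exact, an $O(h)$ surplus per piece breaks the counting in Step~3.
\end{itemize}

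\textbf{The exact-length control is left as an aspiration, and unit gadgets cannot provide it at linear scale.} You correctly flag this as the crux. Free shortening (induced paths in $G$ have fewer than $h+k-1$ vertices, Lemma~\ref{lem:shortcut}) only brings the length to within about $h+k$ of $n$, in uncontrolled steps. You therefore need a structure with $\Theta(h)$ flexibility.

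Gadgets made of two paths whose lengths differ by one, such as short odd cycles with almost antipodal pivots, each need a cycle and a connecting path. Their lengths can only be guaranteed to be logarithmic, here $\Theta(\log h+\log k\log\log k)$, even under constant small-set expansion. So $\Theta(h)$ such gadgets use superlinearly many vertices; this is one source of the logarithmic loss in Haslegrave, Hyde, Kim and Liu.

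The paper's remedy has three parts:
\begin{itemize}
\item Use weight-one switches only until the block weight $S$ is a large multiple of the connecting-path length bound.
\item Then add switches of weight between $3S/8$ and $S/2+1$, built from paths whose length is prescribed up to an additive error $4E(r,q,\beta)$ (Lemma~\ref{lem:coarse-path}). Each addition keeps the block complete, and the weight grows geometrically, giving $\Theta(h)$ flexibility on $O(h)$ vertices.
\item Group the switches into protected blocks. Then splicing in each of the $O(1)$ long cycles through $10^{18}$ disjoint paths costs only a $10^{-15}$ fraction of the flexibility plus at most four blocks (Lemma~\ref{lem:protected-splice}).
\end{itemize}
Without this or an equivalent device, your Step~1 does not produce a cycle of length exactly $n$ when $n=\Theta(h)$.
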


%It is enough to prove the upper bound when \(H\) is complete multipartite. Indeed, take a proper \(\chi(H)\)-coloring whose smallest class has order \(\sigma(H)\), and add all missing edges between distinct color classes. This preserves both \(|H|\) and \(\sigma(H)\).

The linear bound on $|H|$ in Theorem~\ref{thm:main} is necessary.
Haslegrave, Hyde, Kim and Liu~\cite{HHKL} constructed complete
multipartite graphs \(H\) for which \(C_n\) is not \(H\)-good when
\(n=(1-o(1))|H|\). The following example gives a stronger 
lower bound for constant \(C \ge 2-o(1)\). Let \(t\ge2\), set \(H:=K_{1,t}\) and
\(n:=2t-1\), and consider \(G:=K_{t,t}\). Since \(G\) is bipartite and
\(n\) is odd, it is \(C_n\)-free. Moreover,
\(\ol G=K_t\mathbin{\dot\cup}K_t\), which is \(H\)-free. Consequently,
$
 R(C_n,H)\ge2t+1>
 (\chi(H)-1)(n-1)+\sigma(H)=n.
$
Thus \(C_n\) is not \(H\)-good, while
\(n/|H|=(2t-1)/(t+1)\) tends to \(2\) when $t$ tends to infinity.

\subsection{Overview}

It is enough to consider a complete multipartite graph
\(H=K_{m_1,\ldots,m_k}\), and we argue by induction on \(k\), namely the number of its parts. Write \(h:=|H|\). Finding a cycle longer than \(n\) is relatively easy; the main difficulty is to obtain a cycle of exactly the prescribed length \(n\).

Our proof follows the framework of Haslegrave, Hyde, Kim and
Liu~\cite{HHKL,LMOdd}, and Pokrovskiy and Sudakov~\cite{PS}. The main idea is to control the length in two ways. First, we
construct a structure whose length can be varied over an interval of \(\Theta(h)\) consecutive integers; we refer to this as \emph{fine switching}. Secondly, we shorten a long common subpath of the structure without affecting the range of attainable lengths; we refer to this operation as \emph{free-path shortening}. We use the second operation to bring the length close to \(n\), and then the first operation to obtain length exactly \(n\).

\medskip
\noindent\textbf{Fine switching.}
A \emph{protected switcher} is, informally, a cycle-like structure with several local choices: At each switch, we may use either of two paths with the same endvertices but different lengths. A particular choice at every such place gives a \emph{route} through the switcher. The \emph{flexibility} of the switcher measures how much the length of a route can be changed by making these choices. We construct the switcher so that every integer length in the resulting interval is attained; see Lemma~\ref{lem:interval-routes}. The switches are grouped into protected blocks so that, when the switcher is later joined to other cycles, only a few blocks are affected and most of the flexibility is preserved.

The main difficulty is to construct such a switcher with
\(\Theta(h)\) flexibility using only \(O(h)\) vertices. We build on the sublinear-expansion approach of Haslegrave, Hyde, Kim and Liu~\cite{HHKL}, a method originating in the work of Koml\'os and Szemer\'edi~\cite{KS1,KS2} and subsequently developed and applied in many extremal problems; see, for example, \cite{FriedmanKrivelevich,KrivelevichLocallySparse,KrivelevichExp,KrivelevichLongCycles,Letzter,LMMader,MontgomeryMinors,MontgomeryICM}. We use a variable-rate form of expansion, where the required rate depends on the
size of the set and the number of parts remaining in the forbidden multipartite graph.

The key point is that, when expansion fails, the resulting sparse cut can be used to reduce the problem to fewer parts. Indeed, if a set \(S\) has few neighbors, then \(S\) is complete to \(V(G)\setminus(S\cup N_G(S))\) in the complement of $G$. Hence, if some parts of the forbidden multipartite graph occur on one side, the other side must avoid the multipartite graph formed by the remaining parts. Consequently, either we reduce the problem to fewer parts, or we obtain an expanding induced subgraph; this dichotomy is formalized in Lemma~\ref{lem:expanding-subgraph}.

After obtaining an expanding subgraph, we use it to construct the protected switcher for fine switching. Short odd cycles provide the basic switches: Two almost opposite vertices determine two paths whose lengths differ by one, while expansion remains after deleting the cycle; see Lemma~\ref{lem:compact-odd}. To connect the switches disjointly, we use rooted sets, in which every vertex is joined to a prescribed vertex by a short internal path. We choose these rooted sets pairwise disjoint using Lemma~\ref{lem:multi-root}, and connect them by short paths
avoiding previously used vertices using Lemma~\ref{lem:short-path}. Iterating this procedure and grouping the switches into protected blocks yields a protected switcher on \(O(h)\) vertices with \(\Theta(h)\) flexibility, as established in Lemma~\ref{lem:flexible-switcher}.

\medskip
\noindent\textbf{Free-path shortening.}
We use a \emph{free subpath}, meaning a subpath which is contained in every route and is disjoint from all protected blocks. Thus shortening a free subpath shortens every route in exactly the same way, while leaving all switches and hence the flexibility unchanged.

Starting from the protected switcher, we repeatedly join it to long cycles obtained from Lemma~\ref{lem:long-cycle}. Each joining preserves most of the flexibility and makes almost all of the new cycle free; see Lemma~\ref{lem:protected-splice}. After a bounded number of joinings, the longest route exceeds \(n\) and contains enough free length. Lemma~\ref{lem:shortcut} then allows us to shorten free subpaths without changing any switch, until the longest route is within \(O(h)\) of \(n\). Since \(\Theta(h)\) flexibility remains, Lemma~\ref{lem:interval-routes} gives a route of length exactly \(n\).

\medskip
The above argument also explains the main difference from the proof of Haslegrave, Hyde, Kim and Liu~\cite{HHKL}. Their adjusters, which may be viewed as a special case of our switcher in which every switch has
weight one, provide only \(\Omega(h/\log^2 k)\) flexibility, so \(O(\log^2 k)\) of them must be combined to obtain linear flexibility. Moreover, with \(Q\) vertex-disjoint connecting paths, their joining procedure loses an
\(O(Q^{-1/2})\) proportion of the flexibility at each step, leading to the requirement \(Q=\Omega(\log^4 k)\). In contrast, our variable-rate expansion produces a single switcher with \(\Theta(h)\) flexibility, while the protected blocks reduce the proportional loss in each
joining step to \(O(Q^{-1})\), apart from the loss of flexibility from only a bounded number of protected blocks. Since only a bounded number of joinings are needed, a constant number of vertex-disjoint connecting paths suffices, removing the \(\log^4 k\) factor.

The final part of the proof is a stability argument describing the global structure of \(G\). Either the problem reduces to a large induced subgraph whose complement avoids a multipartite graph with one
fewer part, or \(G\) contains \(k-1\) large pairwise anticomplete sets with strong internal connectivity; see Lemma~\ref{lem:stability}. The first case is handled by induction or Corollary~\ref{cor:PS-large}. In
the second case, we extract large \(2\)-connected expanding subgraphs from these sets. If two can be linked by two vertex-disjoint paths, the protected-switcher argument above gives \(C_n\); otherwise, the block structure yields a cutvertex separation, and Lemma~\ref{lem:PS-path} gives \(C_n\).

\medskip
\noindent\textbf{Organization.}
Section~2 contains notation and some results used later. 
The two main ingredients, the variable-rate expander and the switcher structure with related lemmas, are given in Sections 3 and 4, respectively. Section~5 constructs protected switchers with \(\Theta(h)\) flexibility
and establishes the local embedding lemma used in the stability argument. Section~6 establishes
the stability result and proves the main theorem.
\section{Preliminaries}

All graphs are finite and simple. For a graph \(G\), its complement
is denoted by \(\ol G\), and we write \(|G|:=|V(G)|\). For $A\subseteq V(G)$, let
$
 N_G(A)=\{v\in V(G)\setminus A: uv\in E(G)\text{ for some }u\in A\}
$
be the external neighborhood of $A$ in $G$.
We write $G[A]$ for the subgraph induced by $A$ and $G-A$ for
$G[V(G)\setminus A]$. The length of a path or cycle is its number of edges. All logarithms are natural. Write $[t]=\{1,\ldots,t\}$. A complete multipartite graph with part sizes $a_1,\ldots,a_q$ is denoted by $K_{a_1,\ldots,a_q}$, and a graph induced by a subcollection of its parts is a \emph{part-subgraph}. 
For disjoint sets \(A,B\subseteq V(G)\), let \(e_G(A,B)\) denote
the number of edges between \(A\) and \(B\) in $G$. Given graphs \(G\) and \(H\), we call \(G\) \emph{\(H\)-free} if
\(G\) does not contain \(H\) as a subgraph.
%This is the notion denoted by $H'\preceq H$ in Section~3 of~\cite{HHKL}.

We will use the following results. The first is Theorem~1.2 in \cite{HHKL} of Haslegrave, Hyde, Kim and Liu.

\begin{theorem}[Haslegrave--Hyde--Kim--Liu~\cite{HHKL}]
\label{thm:HHKL}
There exists an absolute constant $C>0$ such that $C_n$ is $H$-good for
every graph $H$ and every $n\ge C|H|\log^4\chi(H)$.
\end{theorem}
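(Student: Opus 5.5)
The statement just above is Theorem~\ref{thm:HHKL}, which is quoted verbatim from Haslegrave, Hyde, Kim and Liu~\cite{HHKL} (their Theorem~1.2). It is imported as a black box, so I would not reprove it here; the plan is to cite it and indicate its role in the argument. Still, it is worth recording how the proof in~\cite{HHKL} goes, since our argument follows the same architecture.

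First reduce to complete multipartite $H=K_{m_1,\ldots,m_k}$ with $m_1\le\cdots\le m_k$. To do this, add all missing edges between distinct colour classes of an optimal colouring. This changes neither $|H|$ nor $\chi(H)$ nor $\sigma(H)$, and it only makes $\ol G\supseteq H$ harder. Then induct on $k$. The base case $k=2$, and more generally the regime where $\sigma(H)$ is huge compared with $\chi(H)$, is covered by Pokrovskiy--Sudakov~\cite{PS}.

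For the inductive step, take $G$ on $N=(k-1)(n-1)+m_1$ vertices with $\ol G$ being $H$-free, and apply a sublinear-expansion dichotomy. There are two outcomes.
\begin{enumerate}[label=(\roman*)]
\item A set $S$ with small boundary. Then $S$ is complete in $\ol G$ to the rest, so one side avoids a part-subgraph with fewer parts, and induction applies to a large induced subgraph.
\item A robust expander. Here one builds an \emph{adjuster}, which is a chain of short odd cycles connected by short paths. It yields cycles whose lengths form an interval of size $\Omega(h/\log^2 k)$. One combines $O(\log^2 k)$ of them and splices them with long cycles, using $Q$ disjoint connecting paths.
\end{enumerate}

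The step that forces the logarithmic loss is this splicing. Each join destroys an $O(Q^{-1/2})$ fraction of the flexibility, so $Q=\Omega(\log^4 k)$ connecting paths are needed. The expansion guarantees this only when $n\ge C|H|\log^4 k$.

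The final stability step goes as follows. Either induction applies, or $G$ splits into $k-1$ nearly anticomplete pieces of order about $n$. In the second case one finds $C_n$ inside a piece, using a cut-vertex/2-connectivity analysis.

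Since the result is stated as a citation, the only work here is verifying that the hypotheses match: any graph $H$, and $n\ge C|H|\log^4\chi(H)$. The main obstacle in the original proof, the splicing loss just described, is exactly what the present paper's protected switchers are designed to remove.
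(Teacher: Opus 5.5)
Your proposal is correct and matches the paper: Theorem~\ref{thm:HHKL} is imported as Theorem~1.2 of~\cite{HHKL} with no proof given, so citing it is all that is needed. Your accompanying sketch agrees with the paper's own overview of the HHKL argument: adjusters have flexibility $\Omega(h/\log^2 k)$, and the $O(Q^{-1/2})$ loss per splice forces $Q=\Omega(\log^4 k)$. The sketch is not load-bearing here.
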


The next corollary is Corollary~3.2 in \cite{HHKL}, which is deduced from Pokrovskiy and Sudakov~\cite{PS}. It gives the desired exact Ramsey number when the
largest part of \(H\) has order at least \(k^{22}\).
\begin{corollary}[Haslegrave--Hyde--Kim--Liu~\cite{HHKL}]
\label{cor:PS-large}
Let $H=K_{m_1,\ldots,m_k}$, where $m_1\le\cdots\le m_k$ and
$m_k\ge k^{22}$.  If
$
 n\ge 10^{60}m_k,
$
then $R(C_n,H)=(k-1)(n-1)+m_1$.
\end{corollary}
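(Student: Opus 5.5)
The plan is to obtain the two inequalities separately, with all the real work delegated to the theorem of Pokrovskiy and Sudakov~\cite{PS}.

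\emph{Lower bound.} Here $\chi(H)=k$ and $\sigma(H)=m_1$: a proper $k$-colouring of $K_{m_1,\ldots,m_k}$ must use exactly the $k$ parts as colour classes. Since $n\ge 10^{60}m_k\ge m_1$ and $C_n$ is connected, Burr's construction~\eqref{eq:burr-intro} gives $R(C_n,H)\ge (k-1)(n-1)+m_1$. This step needs nothing beyond the introduction.

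\emph{Upper bound: reduction.} I would first use monotonicity to replace $H$ by the larger graph
\[
H':=K_{m_1,m_k,\ldots,m_k},
\]
which has one part of order $m_1$ and $k-1$ parts of order $m_k$. Then $H\subseteq H'$, $\chi(H')=k$, $\sigma(H')=m_1$ and $|H'|\le k m_k$. Any graph $G$ whose complement is $H$-free also has $H'$-free complement. So it suffices to show that every graph $G$ on $(k-1)(n-1)+m_1$ vertices contains $C_n$ or has $\ol G\supseteq H'$. The point of the reduction is that all parts except one now have order $m_k\ge k^{22}$, and only the smallest part, which governs the Burr bound, may be small.

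\emph{Upper bound: invoking~\cite{PS}.} For $H'$ I would invoke the main theorem of~\cite{PS} in its quantitative form. As I recall it, that form says the following: if $n$ is at least a large absolute constant times the largest part, and the relevant parts have order at least $k^{22}$, then every $G$ on $(k-1)(n-1)+m_1$ vertices contains $C_n$ or $\ol G$ contains $K_{m_1,m_k,\ldots,m_k}$. The hypothesis $n\ge 10^{60}m_k$ is chosen to match the constant there; note that it is linear in $m_k$, not in $|H|$.

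\emph{Main obstacle.} The delicate point is matching the hypotheses of~\cite{PS} exactly. If their theorem is literally stated only under $\sigma\ge\chi^{22}$, so that all parts must be large, I would handle the single small part by hand. Apply their result with the first part enlarged to $m_k$, inside an auxiliary setting. If $\ol G$ contains no copy of $H'$, run their stability analysis: either $G$ has $k-1$ disjoint near-cliques of order about $n-1$ whose union misses at least $m_1$ vertices, or a $C_n$ appears. In the first case, the leftover $m_1$ vertices, together with one vertex from each near-clique, give $H'$ in $\ol G$ unless some leftover vertex has many neighbours in a near-clique; such a vertex can be absorbed into a cycle of length exactly $n$ using the Hamiltonian-connectedness of that near-clique. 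I expect this absorption and bookkeeping, rather than any new idea, to be the main work.
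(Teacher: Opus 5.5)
Your plan is essentially the paper's. The paper does not reprove this statement; it imports it as Corollary~3.2 of~\cite{HHKL}, which is derived from the multipartite theorem of Pokrovskiy and Sudakov~\cite{PS}, and Burr's lower bound together with the monotonicity step $H\subseteq K_{m_1,m_k,\ldots,m_k}$ is exactly the deduction needed. The contingency in your last paragraph should be unnecessary (and as sketched it would not amount to a proof): since~\cite{HHKL} obtains this corollary from~\cite{PS}, that theorem must already allow an arbitrary smallest part and $n$ linear in the largest part, so it applies directly to $K_{m_1,m_k,\ldots,m_k}$.
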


The following bipartite statement is an extension of Corollary~3.8 in~\cite{PS} for all $b$.
\begin{corollary}
\label{cor:PS-bip}
Let $b\ge a\ge1$ and let $b_0=\max\{b,8\}$.  If
$
 n\ge 2\times10^{49}b_0,
$
then $R(C_n,K_{a,b})=n+a-1$.
\end{corollary}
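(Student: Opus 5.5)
The lower bound $R(C_n,K_{a,b})\ge n+a-1$ is Burr's bound~\eqref{eq:burr-intro}: we have $\chi(K_{a,b})=2$, $\sigma(K_{a,b})=a$, and $n\ge a$. So only the upper bound needs proof, and the plan is to reduce it to Corollary~3.8 of~\cite{PS}. As I recall it, that corollary says $R(C_n,K_{a,b})=n+a-1$ for $b\ge a$ whenever $n$ exceeds a large absolute constant times $b$, and it carries a lower restriction on $b$, presumably of the form $b\ge 8$ or similar. The constant $2\times 10^{49}$ is meant to match the constant there. I would split into two cases: $b\ge 8$, and $b<8$.

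\emph{Case $b\ge 8$.} Here $b_0=b$ and $n\ge 2\times10^{49}b$, which is exactly the hypothesis of Corollary~3.8 in~\cite{PS}. That result gives the claim directly.

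\emph{Case $b<8$.} Here $b_0=8$ and $n\ge 16\times 10^{49}$. Let $G$ have $N=n+a-1$ vertices, and suppose $\ol G$ is $K_{a,b}$-free. The tempting move is monotonicity: $K_{a,b}\subseteq K_{a,8}$, so $\ol G$ is also $K_{a,8}$-free, and we would like to apply the previous case to $K_{a,8}$ with the same $a$. However, Corollary~3.8 applied to $K_{a,8}$ yields $R(C_n,K_{a,8})=n+a-1$, since $\sigma(K_{a,8})=a$ when $a\le 8$. This uses the same number of vertices $N=n+a-1$. Hence every $G$ on $N$ vertices contains $C_n$ or has $K_{a,8}\subseteq\ol G$. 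Since $K_{a,b}\subseteq K_{a,8}$, the second alternative contains $K_{a,b}$, so
\[
R(C_n,K_{a,b})\le R(C_n,K_{a,8})=n+a-1 .
\]
The condition $a\le b<8$ ensures $a\le 8$, so the smaller part of $K_{a,8}$ is still $a$ and the Ramsey value does not grow.

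The main obstacle is checking the exact hypotheses of Corollary~3.8 in~\cite{PS}: the threshold constant, and whether it really requires only $b\ge 8$ (or some other small bound) together with $n\ge 2\times10^{49}b$. If the restriction is on $a$ instead, for instance $a\ge$ some constant, the padding trick has to be adjusted. I would first try to enlarge both parts to $K_{8,8}$, but that changes $\sigma$ to $8$ and gives only the upper bound $n+7$, which is not sharp. In that event I would fall back on a direct argument for small $a$. Using $n\ge 10^{50}$, a graph on $n+a-1$ vertices with $K_{a,b}$-free complement has minimum degree close to $N$, apart from fewer than $a+b$ exceptional vertices. One then finds a Hamiltonian-type cycle of length exactly $n$ through the high-degree vertices using standard Dirac/Pósa-type arguments.
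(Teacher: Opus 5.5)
Your proposal is correct and matches the paper's proof: for $b\ge 8$ you cite Corollary~3.8 of~\cite{PS} directly, and for $b<8$ you apply it to $K_{a,8}\supseteq K_{a,b}$, noting that $a\le 8$ keeps the value at $n+a-1$, with the lower bound from Burr's construction. The contingency plan you sketch (the Dirac/P\'osa-type direct argument) is not needed.
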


\begin{proof}
For $b\ge8$, this is exactly Corollary~3.8 in~\cite{PS}. For $b<8$,
apply that corollary to $K_{a,8}$.
\end{proof}

The path lemma is Lemma 3.7 in \cite{PS} with the extension to $m<8$. It shows that, if the complement is
\(K_{m,m}\)-free and \(|A\cup N_G(A)|\ge n\) for every set \(A\) of
at least \(m\) vertices, then any sufficiently long \(x\)--\(y\) path
can be replaced by an \(x\)--\(y\) path on exactly \(n\) vertices.
\begin{lemma}
\label{lem:PS-path}
Let $m\ge1$, write $\bar m=\max\{m,8\}$, and suppose that
$
 n\ge 2\times10^{49}\bar m.
$
Let $G$ be a graph such that $\ol G$ is $K_{m,m}$-free and
$
 |A\cup N_G(A)|\ge n
$
for every $A\subseteq V(G)$ with $|A|\ge m$. If $x,y\in V(G)$ are joined by
an $x$--$y$ path on at least $8\bar m$ vertices, then they are joined by an $x$--$y$ path on exactly $n$ vertices.
\end{lemma}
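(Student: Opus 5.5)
This is Lemma~3.7 of~\cite{PS} when $m\ge8$, so the plan is to reduce the case $m<8$ to that one by monotonicity, exactly as in the proof of Corollary~\ref{cor:PS-bip}. The reduction sets $m':=\bar m=\max\{m,8\}$ and checks that the hypotheses of the lemma with parameter $m$ imply the hypotheses of Lemma~3.7 of~\cite{PS} with parameter $m'$. The lower bound on $n$ is literally the same, since $n\ge 2\times10^{49}\bar m=2\times10^{49}m'$.

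The two structural hypotheses carry over as follows.
\begin{enumerate}[label=(\roman*)]
\item $\ol G$ is $K_{m',m'}$-free. Since $m\le m'$, the graph $K_{m,m}$ is a subgraph of $K_{m',m'}$. Hence any copy of $K_{m',m'}$ in $\ol G$ would contain a copy of $K_{m,m}$, which is excluded.
\item The expansion condition: every $A\subseteq V(G)$ with $|A|\ge m'$ also satisfies $|A|\ge m$. By hypothesis it therefore has $|A\cup N_G(A)|\ge n$.
\end{enumerate}

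The path condition is unchanged. The given $x$--$y$ path has at least $8\bar m=8m'$ vertices, which is exactly what Lemma~3.7 of~\cite{PS} requires for parameter $m'\ge8$. Applying that lemma then produces an $x$--$y$ path on exactly $n$ vertices.

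The only point needing care is matching the precise form of Lemma~3.7 in~\cite{PS}. The questions are whether its expansion hypothesis is stated with $|A\cup N(A)|$ or with $|N(A)|$, whether it quantifies over sets of size exactly $m$ or at least $m$, and what its precise constants are. If it uses sets of size exactly $m'$, the argument still works: any $A$ with $|A|=m'\ge m$ is covered by our hypothesis. If its threshold on $n$ is stated differently, I would verify that $2\times10^{49}\bar m$ dominates it, which is the choice made in Corollary~\ref{cor:PS-bip}. Beyond this bookkeeping there is no real obstacle.
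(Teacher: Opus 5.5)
Your proposal is correct and is essentially the paper's own argument. For $m\ge 8$ the statement is Lemma~3.7 of \cite{PS}. For $m<8$ the paper applies that lemma with parameter $8$, noting that its hypotheses follow by the same monotonicity checks you carry out.
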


\begin{proof}
For $m\ge8$, this is Lemma 3.7 in \cite{PS}. If $m<8$, apply that lemma with parameter \(8\), since its assumptions are immediate from those above.
\end{proof}

\section{Variable-rate expansion}

Given a graph whose complement does not contain a prescribed complete multipartite graph, we shall pass to an induced subgraph which is not too large and has suitable expansion properties. The expansion rate depends on both the size of the set and the number of parts in the current forbidden part-subgraph.

Throughout this section, \(r\le q\) denotes the number of parts in the current forbidden part-subgraph, while \(q\) remains unchanged when \(r\) decreases.
Fix \(\eta:=10^{-4}\) and \(K:=10^{4}\). For integers \(d\ge1\), real \(\beta\ge1\), and \(q\ge2\), let \(p_{1,q}(d,\beta):=2d\), and, for
\(2\le r\le q\), let
\[
 p_{r,q}(d,\beta):=
 2d+\left\lceil
 K\beta\left(
 2r-3+\frac{(r-1)\log r}{\log(2q)}
 \right)
 \right\rceil.
\]
When \(d\), \(\beta\), and \(q\) are fixed, we write \(p_r\) for
\(p_{r,q}(d,\beta)\). When \(\beta\ge qd\), directly from the definition,
$
\frac12K\beta u\le p_{u,q}(d,\beta)\le4K\beta u
$
for every $2\le u\le q$.
We will use this inequality many times in the following proof.

For \(2\le r\le q\) and \(x\ge0\), let \(s\) be the largest integer in \([r-1]\) such that either \(s=1\) or \(p_{s,q}(d,\beta)\le x\), and define
\[
 \phi_{r,q,\beta}(x):=
 \frac{ \eta\left(1+\log(r/s)\right)}{\log(2q)}.
\]
We use a variable-rate extension of the expansion condition in Definition~4.1
\cite{HHKL}, which builds on the sublinear-expansion
methods of Liu and Montgomery~\cite{LMOdd}. The expansion rate for
large vertex sets is allowed to depend on the size of the set and on
the number of parts in the current forbidden part-subgraph.
\begin{definition}\label{def:compact-expansion}
A graph \(F\) \textit{multi-\((d,r,q,\beta,c)\)-expands into \(W\subseteq V(F)\)} if the following hold for every $S\subseteq V(F)$:
\begin{enumerate}[label=\textup{(\roman*)}]
\item \(|N_F(S)\cap W|\ge c|S|\) whenever \(|S|\le\beta\);
\item \(|N_F(S)|\ge
\phi_{r,q,\beta}(|S|)|S|+10\beta\) whenever
\(\beta\le|S|\le|F|/2\).
\end{enumerate}
%It \emph{strongly multi-\((d,r,q,\beta)\)-expands} if \(16\) in \textup{(i)} is replaced by \(20\). 
When \(W=V(F)\), we omit the words ``into \(W\)''.
\end{definition}

The next lemma helps us find an expander subgraph while preserving a complete multipartite forbidden graph with at least two parts.

\begin{lemma}\label{lem:expanding-subgraph}
Let \(\mathcal J\) be a complete \(q\)-partite graph whose parts have size in \([d,2d]\), and let \(J\subseteq\mathcal J\) be induced by \(r\ge2\) of its parts. Suppose that \(\beta\ge qd\),
\(\ol G\) is \(J\)-free, and \(|G|\ge p_{r,q}(d,\beta)\). Then there are
a part-subgraph \(J'\subseteq J\) with \(r'\ge2\) parts and an induced subgraph \(F\subseteq G\) such that \(\ol F\) is \(J'\)-free, \(p_{r',q}(d,\beta)-2d\le|F|\le p_{r',q}(d,\beta)\), and \(F\) multi-\((d,r',q,\beta,20)\)-expands.
\end{lemma}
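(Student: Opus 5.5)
We argue by induction on \(r\), combined with an iterative deletion procedure for the current part-subgraph \(J\). Fix \(r\ge2\) and \(J\) with \(r\) parts, and first pass to an induced subgraph \(G_0\subseteq G\) with \(|G_0|=p_r\) (possible since \(|G|\ge p_r\)). Then \(\ol{G_0}\) is still \(J\)-free. If \(G_0\) multi-\((d,r,q,\beta,20)\)-expands, we stop with \(F=G_0\), \(J'=J\). Otherwise we look for a violating set and either remove it or use it to reduce the number of parts. The procedure deletes vertices one violating set at a time, keeping the current graph \(F_i\) of order in \([p_r-2d,p_r]\). Since \(p_r\) exceeds \(p_{r-1}\) by roughly \(K\beta\), we can afford to delete up to about \(2d\) vertices in total. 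This budget is tight, so deletion is reserved for small sets, and large violating sets are handled by the sparse-cut argument.

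\emph{Small sets (condition (i)).} Suppose \(S\) with \(|S|\le\beta\) has \(|N_F(S)|<20|S|\). Consider the union \(S^*\) of a maximal sequence of such violators, or equivalently a maximal bad set of size at most \(\beta\), so that \(|N(S^*)|<20|S^*|\). If \(|S^*|\) is tiny compared with \(d\), we delete it together with nothing else. Expansion into \(W\) is not needed here since \(W=V(F)\). If instead \(|S^*|\ge d\) (and at most roughly \(\beta\)), then \(S^*\) contains \(\ge d\) vertices whose neighbourhood \(N(S^*)\) has size \(<20\beta\). Hence \(S^*\) is complete in \(\ol F\) to \(V(F)\setminus(S^*\cup N(S^*))\), which has size at least \(p_r-2d-21\beta\ge p_{r-1}\) when \(K\) is large. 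Because \(S^*\) has at least \(d\) vertices, it contains an independent set of size \(d\) in \(\ol F\), or more generally a copy of one part of \(J\) in \(\ol F\). Otherwise \(\ol F[S^*]\) would be complete, which is fine as well. Either way \(\ol F[S^*]\) contains an empty graph on \(2d\ge\) (some part size) vertices, or it is dense. We therefore remove the part of \(J\) embedded in \(S^*\) and apply induction to the rest, \(G[V\setminus(S^*\cup N(S^*))]\), which is \(J^-\)-free with \(r-1\ge2\) parts. If \(r-1=1\), then \(S^*\) itself must contain no independent \(d\)-set in \(\ol F\)... This case needs separate care via Ramsey-type counting, since both sides of the cut must carry parts.

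\emph{Large sets (condition (ii)).} Suppose \(\beta\le|S|\le|F|/2\) with \(|N(S)|<\phi(|S|)|S|+10\beta\). Take \(s\) as in the definition of \(\phi\), so \(p_s\le|S|\). The complement \(T=V(F)\setminus(S\cup N(S))\) has size at least \(|F|/2-\phi|S|-10\beta\). We split \(J\) into \(s\) parts for \(S\) and \(r-s\) parts for \(T\). Since \(\ol F\) is \(J\)-free and \(S\), \(T\) are complete to each other in \(\ol F\), either \(\ol F[S]\) is free of the \(s\)-part subgraph or \(\ol F[T]\) is free of the \((r-s)\)-part subgraph. We recurse on whichever side avoids its part-subgraph, provided that side has size at least \(p_{s}\) resp. \(p_{r-s}\). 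If the recursion reaches one part, we use the bipartite-type case by choosing the split with both sides containing at least two parts where possible.

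The main obstacle is the size bookkeeping for \(T\). We need \(|T|\ge p_{r-s}\) while \(|F|\approx p_r\) and \(|S|\) may be as large as \(p_r/2\). This is exactly where the \((r-1)\log r/\log(2q)\) term and \(\phi=\eta(1+\log(r/s))/\log(2q)\) are designed to interlock. I expect to verify, by a convexity estimate on \(x\log x\), that \(p_r-|S|(1+\phi)-10\beta\ge p_{r-s}\) whenever \(p_s\le|S|<p_{s+1}\). One subtlety is that the optimal choice of split may be to put \(r-s\) parts on \(S\) instead. In that case I would choose which side receives how many parts according to \(|S|\), taking the largest \(s\) with \(p_s\le|S|\). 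The result is a termination argument in which every branch either decreases \(r\) or deletes few vertices, ending in a multi-expander of size in \([p_{r'}-2d,p_{r'}]\).
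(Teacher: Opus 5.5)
Your strategy is the same as the paper's. A sparse cut $(S,T)$, with $T=V(F)\setminus(S\cup N_F(S))$, lets you split the parts of $J$ between $S$ and $T$ and induct on $r$; your choice of $s$ parts for $S$ and $r-s$ for $T$ is the paper's $a+t=r$. Otherwise you delete fewer than $2d$ vertices. However, the two steps that carry the proof are left open.

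First, your worry that $r=2$ and one-part sides need ``Ramsey-type counting'' comes from treating containment in $\ol F$ as induced. Copies of $J$ are ordinary subgraphs and every part of $J$ is edgeless, so any set of at least $2d$ vertices hosts any part.
\begin{itemize}
\item A side that receives a single part therefore always contains it, which forces the other side to be free.
\item For $r=2$, a sparse cut with $|S|,|T|\ge 2d$ gives $J\subseteq\ol G$ outright, so at $r=2$ only the deletion branch can occur.
\item This is also why your cut threshold must be $2d$, not $d$: a set of order in $[d,2d)$ need not host any part.
\end{itemize}
With threshold $2d$ your small-set branch works cleanly. Delete violators of (i) one after another; their union $U$ satisfies $|N_{G_0}(U)|<20|U|$. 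Either $|U|$ reaches $2d$, in which case $|U|<2d+\beta$, $T$ is still of order at least $p_{r-1}$, and you cut; or fewer than $2d$ vertices are deleted in total. The paper does this in one step with a maximal $X$ of order at most $4d$, and builds $200\beta$ of slack into bad sets so that (ii) survives the deletion. Your alternative of re-checking (ii) afterwards also works.

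Second, the bound $|T|\ge p_{r-s}$ is the real content of the lemma, and you only say you expect it. As you formulate it, assuming only $p_s\le|S|<p_{s+1}$, it can fail. Take $q=r$ large and $s=r-2$: the loss $\phi|S|$ is of order $\eta K\beta r/\log r$, whereas $p_r-p_{r-1}-p_2=O(K\beta)$, so no convexity estimate on $x\log x$ alone closes the gap. You must first use $|S|\le|F|/2\le p_r/2$ to force $s\le r/2+O(r/\log r)$; the paper's claim $|T|\ge p_{\lceil r/2\rceil}$ plays this role. Only then compare
\[
 p_r-p_{s+1}-p_{r-s}\ \ge\ K\beta\Bigl(1+\frac{s\log\frac{r}{s+1}+(r-s-1)\log\frac{r}{r-s}}{\log(2q)}\Bigr)-2d-2
\]
with $\phi p_{s+1}+10\beta+2d\le 8\eta K\beta\, s(1+\log(r/s))/\log(2q)+10\beta+2d$. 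In that range one checks that the numerator is at least a constant fraction of $s(1+\log(r/s))$; since $\eta=10^{-4}$, the inequality follows. The case $s=r-1$ is trivial, since $T$ then needs only $p_1=2d$ vertices.

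Without these two pieces, the proposal is a correct outline rather than a proof.
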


\begin{proof}
Fix \(d,\beta\), and \(q\). We proceed by induction on \(r\). 
Call a set \(S\subseteq V(G)\) \textit{bad} if \(2d\le|S|\le p_r/2\) and
$
 |N_G(S)|\le
 \phi_{r,q,\beta}(|S|)|S|+200\beta.
$
The argument below also covers the case when \(r=2\): In this case, a bad set gives a copy of \(J\) in \(\ol G\), while if no bad set exists, the required subgraph is obtained directly.
By removing vertices, we may assume that \(|G|=p_{r,q}(d,\beta)\).

\medskip
\noindent \textbf{Case 1.}
A bad set \(S\) exists.  
\medskip

Let \(x:=|S|\) and \(T:=V(G)\setminus(S\cup N_G(S))\). Let \(s\) be the integer used in the definition of \(\phi_{r,q,\beta}(x)\). 
%Note that in this case, we have $r \ge 3$.
\begin{claim}
\(|T|\ge p_{\lceil r/2\rceil}\).
\end{claim}
Note that \(\phi_{r,q,\beta}\leq 3\eta\). If \(x\le p_r/3\), then
$
|T|\ge\left(\frac23-\eta\right)p_r-200\beta
 \ge p_{\lceil r/2\rceil}
$.
Now suppose that \(x>p_r/3\). 
Thus,
$
p_{s+1}>x>\frac{p_r}3.
$
Hence
$
4K\beta(s+1)>\frac{K\beta r}{6}
$
and \(r/s\le48\).
It follows that
$
 \phi_{r,q,\beta}(x)p_r
 \le
 200\eta K\beta
 \left(
 1+\frac{r}{\log(2q)}
 \right).
$
Since \(x\le p_r/2\), we obtain
$
 |T|\ge
 \frac{p_r}{2}
 -200\eta K\beta
 \left(
 1+\frac{r}{\log(2q)}
 \right)-200\beta>\frac{p_r}{2}
 -\frac{K\beta}{10}
 \left(
 1+\frac{r}{\log(2q)}
 \right)+2d+2>p_{\lceil r/2\rceil}.
$
\hfill $\blacksquare$

If \(r=2\), then \(|T|\ge p_1=2d\), while
\(|S|\ge2d\). The two parts of \(J\) have orders at most \(2d\).
Since \(e_G(S,T)=0\), we may choose one part of \(J\) in \(S\) and the other in \(T\), obtaining a copy of \(J\) in \(\ol G\), a contradiction. Thus, when \(r=2\), no bad set exists, and Case 1 cannot happen.
%Case~2 gives the required subgraph. We may therefore assume that \(r\ge3\).
Let \(t\) be maximal such that \(p_t\le|T|\), and let
\(a\le r-t\) be maximal such that \(p_a\le x\).  Then
\(t\ge\lceil r/2\rceil\) and \(a\ge1\).  
\begin{claim}
\(a+t=r\).
\end{claim}

Suppose for contradiction that $a+t<r$. By the maximality of $a$ and $t$, we have $x<p_{a+1}$ and $|T|<p_{t+1}\le p_{r-a}$. Since $S$ is bad, it follows that $p_r<p_{a+1}+p_{r-a}+\phi_{r,q,\beta}(x)p_{a+1}+200\beta$.
Since \(p_a\le x\), we have \(s\ge a\). 
Moreover, the function
$
  z\longmapsto 1+\log(r/z)
$
is decreasing on \((0,\infty)\). Hence
$
  \phi_{r,q,\beta}(x)
  \le
  \eta\frac{1+\log(r/a)}{\log(2q)}.
$
Moreover,
$
  p_{a+1,q}(d,\beta)
  \le 4K\beta(a+1)
  \le 8K\beta a,
$
and hence
$
  \phi_{r,q,\beta}(x)p_{a+1,q}(d,\beta)
  \le
  8\eta K\beta
  \frac{a(1+\log(r/a))}{\log(2q)}
  \le
  8\eta K\beta
  \left(
    1+\frac{a(1+\log(r/a))}{\log(2q)}
  \right).
$
Moreover, $p_{a+1}\le4K\beta(a+1)\le8K\beta a$ and hence
$\phi_{r,q,\beta}(x)p_{a+1}\le
8\eta K\beta\bigl(1+a(1+\log(r/a))/\log(2q)\bigr)$.
On the other hand, 
$p_r-p_{r-a}-p_{a+1}\ge
(K\beta/10)\bigl(1+a(1+\log(r/a))/\log(2q)\bigr)-2d-2>
8\eta K\beta\bigl(1+a(1+\log(r/a))/\log(2q)\bigr)+200\beta$,
a contradiction. Hence $a+t=r$. \hfill $\blacksquare$

\medskip

Partition the parts of $J$ into sets of sizes $a$ and $t$, and let $J_S$ and $J_T$ be the corresponding part-subgraphs. Since $e_G(S,T)=0$, copies of $J_S$ in $\ol{G[S]}$ and $J_T$ in $\ol{G[T]}$ would together form a copy of $J$ in $\ol G$. Hence either \(\ol{G[S]}\) does not contain \(J_S\), or \(\ol{G[T]}\) does not contain \(J_T\).
Suppose first that \(a=1\). Since \(p_1=2d\le|S|\) and the unique part of \(J_S\) has order at most \(2d\), the graph \(\ol{G[S]}\) contains a copy of \(J_S\). Hence \(\ol{G[T]}\) is \(J_T\)-free. Moreover, \(t=r-1\), so \(2\le t<r\), and \(|T|\ge p_t\). The induction hypothesis applied to \(G[T]\) and \(J_T\) gives the required subgraph. 
Suppose that \(a\ge2\). Then \(2\le a,t<r\), and
\(|S|\ge p_a\) and \(|T|\ge p_t\). If \(\ol{G[S]}\) does not contain \(J_S\), apply the induction hypothesis to \(G[S]\) and \(J_S\).
Otherwise \(\ol{G[T]}\) does not contain \(J_T\), and the induction hypothesis applies to \(G[T]\) and \(J_T\). In both cases, \(d,\beta\), and \(q\) remain unchanged.

\medskip
\noindent \textbf{Case 2.}
No bad set exists.  
\medskip

If there is a nonempty set \(X\subseteq V(G)\) with \(|X|\le4d\) and
\(|N_G(X)|<20|X|\), choose one of maximum size; otherwise let
\(X:=\varnothing\).  
We have \(4d< p_r/2\). Thus $|X|<2d$, since otherwise $2d\le |X|\le p_r/2$ and $|N_G(X)|<20|X|\le200\beta\le
\phi_{r,q,\beta}(|X|)|X|+200\beta$, so $X$ would be bad.
Let \(F:=G-X\). Since \(r\ge2\),
$
 |F|\ge p_r-2d\ge K\beta>2\beta,
$
and hence \(\beta<|F|/2\).
Let \(Y\subseteq V(F)\) be nonempty with \(|Y|\le|F|/2\). 
Suppose that \(0<|Y|\le2d\) and \(|N_F(Y)|<20|Y|\). Then
\(|X\cup Y|<4d\) and
$
 |N_G(X\cup Y)|
 \le |N_G(X)|+|N_F(Y)|
 <20|X\cup Y|,
$
a contradiction to the choice of \(X\).  
If $2d\le |Y|\le\beta$, then $\beta< p_r/2$, so $|N_F(Y)|\ge |N_G(Y)|-|X|>200\beta-2d\ge20|Y|$.
Finally, if \(\beta\le|Y|\le|F|/2\), then
$
 |N_F(Y)|>
 \phi_{r,q,\beta}(|Y|)|Y|+200\beta-2d
 \ge
 \phi_{r,q,\beta}(|Y|)|Y|+10\beta.
$
Thus \(F\) multi-\((d,r,q,\beta,20)\)-expands.  Since
\(|X|<2d\), we also have \(p_r-2d\le|F|\le p_r\),
as required.
\end{proof}

\subsection{Expansion properties}
The expansion properties established above give us several tools needed to construct protected switchers.
Define
\begin{equation}\label{eq:path-bound}
 E(r,q,\beta):=
 \left\lceil
 10^6\bigl(
  \log(2\beta)+\log(2q)\log\log(4r)
 \bigr)
 \right\rceil.
\end{equation}
The first lemma allows us to join two
prescribed sets, while avoiding a smaller set, by a path which is not too long.
\begin{lemma}\label{lem:short-path}
Let $\beta\ge qd$. Suppose that $F$ multi-$(d,r,q,\beta,16)$-expands into $W$ and that $|F|\le p_{r,q}(d,2\beta)$. Let $A,B,Z$ be pairwise disjoint subsets of $V(F)$ with $A,B\ne\varnothing$, and suppose that
$
 |Z\cap W|\le14\min\{|A|,|B|\},
$
and
$
|Z|\le5\beta.
$
Then $F-Z$ contains an $A$--$B$ path of length at most
$E(r,q,\beta)$.
\end{lemma}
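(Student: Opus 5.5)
The plan is the standard ball-growing argument for sublinear expanders. Let $F':=F-Z$. Set $B_0^A:=A$ and $B_{i+1}^A:=B_i^A\cup N_{F'}(B_i^A)$, so $B_i^A$ is the ball of radius $i$ around $A$ in $F'$, and define $B_i^B$ in the same way. We will show that both balls exceed $|F'|/2$ within roughly $E/2$ steps. Two balls of size more than $|F'|/2$ must intersect, and a common vertex gives an $A$--$B$ path in $F-Z$ of length at most about $E$. The argument has three phases, according to the size of the current ball $X=B_i^A$.

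\emph{Small phase} ($|X|\le\beta$). By (i), $|N_F(X)\cap W|\ge16|X|$. The set $Z$ can remove at most $|Z\cap W|\le14|A|\le14|X|$ of these neighbours, because $A\subseteq X$. Hence $|N_{F'}(X)|\ge2|X|$, and the ball at least triples at each step. After $O(\log\beta)$ steps its size exceeds $\beta$. This is exactly why the hypothesis $|Z\cap W|\le14\min\{|A|,|B|\}$ is stated against $|A|,|B|$ rather than against $|X|$.

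\emph{Large phase} ($\beta\le|X|\le|F|/2$). Here we use (ii): $|N_F(X)|\ge\phi_{r,q,\beta}(|X|)|X|+10\beta$. Since $|Z|\le5\beta$, the additive term $10\beta$ absorbs the loss caused by $Z$, so $|N_{F'}(X)|\ge\phi(|X|)|X|+5\beta\ge\phi(|X|)|X|$. The sizes from $\beta$ up to $|F|/2\le p_{r,q}(d,2\beta)/2\le 4K\beta r$ are split into dyadic-type ranges according to the index $s$ in the definition of $\phi$. If $p_{s}\le|X|<p_{s+1}$, the growth factor is $1+\eta(1+\log(r/s))/\log(2q)$. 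Hence the number of steps spent in that range is at most about
\[
\frac{\log(p_{s+1}/p_s)\,\log(2q)}{\eta(1+\log(r/s))}\lesssim \frac{\log(2q)}{\eta\,s\,(1+\log(r/s))},
\]
using $p_{s+1}/p_s\le 1+O(1/s)$, which follows from $\tfrac12K\beta u\le p_u\le 4K\beta u$ and the linear form of $p_u$. Sizes between $\beta$ and $p_1$, and at the low end, take $O(\log(2q)/\eta)$ steps, because there the growth factor is at least $1+\eta/\log(2q)$ and the size ratio is bounded. Summing over $s$ gives
\[
\sum_{s=1}^{r}\frac{1}{s(1+\log(r/s))}=O(\log\log(4r)),
\]
which comes from comparing with $\int_{1}^{r}\frac{dx}{x(1+\log(r/x))}=\log(1+\log r)$. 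The total for this phase is therefore $O(\eta^{-1}\log(2q)\log\log(4r))$.

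\emph{Crossing half.} The ball has to pass $|F'|/2$ rather than $|F|/2$. Since $|F'|\ge|F|-5\beta$, any $X$ with $|X|\le|F'|/2\le|F|/2$ still satisfies (ii). So the ball grows until it exceeds $|F'|/2$. The same holds from $B$, and the two balls meet. Adding the phases, with $\eta=10^{-4}$, gives a path length of at most $2\bigl(O(\log(2\beta))+O(10^4\log(2q)\log\log(4r))\bigr)\le E(r,q,\beta)$ with room to spare in the constant $10^6$.

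The main obstacle is the large-phase summation. One has to verify that $p_{s+1}/p_s$ is close enough to $1$ (the $2d$ offset and the ceiling are harmless because $\beta\ge qd$). One must also handle sizes above $p_{r-1}$ up to $|F|/2$: there $s=r-1$ and the factor is $\log(r/(r-1))$-sized, but the range is only a constant ratio, since $|F|\le p_{r}(d,2\beta)\le 8K\beta r$. This costs $O(\log(2q)/\eta)$ steps, which is also absorbed.
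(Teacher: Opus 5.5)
Your argument is the same as the paper's: two-sided breadth-first growth in $F-Z$, tripling below $\beta$ because $|Z\cap W|\le 14|A|\le14|X|$, and absorbing $Z$ into the additive $10\beta$ above $\beta$. As in the paper, you treat the $s=1$ stage separately (you wrote $p_1$, but $p_1=2d\le\beta$, so you mean $p_2$) and the top range $s=r-1$, and a harmonic-type summation gives $\eta^{-1}\log(2q)\log\log(4r)$. The structural difference is in how the middle ranges are counted. The paper groups the values of $s$ into dyadic blocks $r/2^{j+1}<s\le r/2^j$. Inside each block $|A_i|$ changes by a factor at most $16$ while the rate is at least $\frac{\eta}{2}(j+1)/\log(2q)$. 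You instead sum over individual $s$ and compare with $\int_1^r dx/(x(1+\log(r/x)))$.

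That difference creates one step that does not work as written. Rounds are integers, so every value of $s$ whose range the ball enters costs at least one round. This holds even when your budget $\log(p_{s+1}/p_s)\log(2q)/(\eta(1+\log(r/s)))$ is far below $1$, which is the case for every $s$ beyond a constant multiple of $\log(2q)/\eta$. Bounding each range by the ceiling of its budget therefore adds one round per visited range, i.e.\ up to $r-2$ extra rounds. Since $r$ can be much larger than $E(r,q,\beta)$ (for example $r$ close to $q$ with $\beta$ polynomial in $q$, as in the paper's applications), the claimed bound fails. You need one more observation to justify the ceiling-free sum. The function $\phi_{r,q,\beta}(x)$ is non-increasing in $x$, because $s$ is non-decreasing in $x$. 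Hence $\Psi(x):=\int_\beta^x\frac{dy}{y\log(1+\phi_{r,q,\beta}(y))}$ increases by at least $1$ in every round with $\beta\le|X|\le|F|/2$. So the number of such rounds is at most $1+\Psi(|F|/2)$, which is exactly your sum. Alternatively, group the values of $s$ dyadically as the paper does, which costs only $O(\log r)$ rounding terms. With either fix, your constants fit inside $E(r,q,\beta)$.
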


\begin{proof}
Set \(A_0:=A\) and
\(A_{i+1}:=(A_i\cup N_F(A_i))\setminus Z\). For \(|A_i|\le\beta\), we have
$
 |A_{i+1}|
 \ge |A_i|+16|A_i|-|Z\cap W|
 \ge 3|A_i|.
$
Thus,
\(|A_a|\ge\beta\), where \(a:=\lceil\log_3\beta\rceil\).
Now suppose that \(\beta\le|A_i|\le|F|/2\). Since \(|Z|\le5\beta\), we have
\begin{equation}\label{eq:BFS-medium}
 |A_{i+1}|
 \ge
 \bigl(1+\phi_{r,q,\beta}(|A_i|)\bigr)|A_i|.
\end{equation}
Let \(s\) be the integer used in the definition of
\(\phi_{r,q,\beta}(|A_i|)\). We first consider the case when \(s=1\).
Then \(\phi_{r,q,\beta}(|A_i|)\ge
\varepsilon_0:=\eta(1+\log 2)/\log(2q)\). If \(r\ge3\), the maximality
of \(s\) implies that \(|A_i|<p_{2,q}(d,\beta)\). When \(r=2\), it
suffices to show that \(|A_a|>|F|/2\), and
\(|F|/2\le p_{2,q}(d,2\beta)/2\le p_{2,q}(d,\beta)\). Moreover,
\(p_{2,q}(d,\beta)\le2K\beta\), since \(\beta\ge qd\). Thus, either
\(|A_j|>|F|/2\) for some \(j\le i+b\), or
\(|A_{i+b}|\ge p_{2,q}(d,\beta)\), where
\(b:=\lceil\log_{1+\varepsilon_0}(2K)\rceil\). Since
\(\log(1+x)\ge x/2\) for \(0\le x\le1\) and \(\beta\ge q\), we have \(a+b\le150000\log(2\beta)\).

We may now assume that \(s\ge2\).  For each \(j\ge0\), consider those values of \(i\) for which
$
 \frac{r}{2^{j+1}}<s\le\frac{r}{2^j}.
$
Then \(1+\log(r/s)\ge(j+1)/2\), and consequently
\begin{equation}\label{eq:range-expansion}
 \phi_{r,q,\beta}(|A_i|)
\ge
\frac{\eta}{2}\frac{j+1}{\log(2q)}.
\end{equation}

Fix a nonempty dyadic range \(\frac{r}{2^{j+1}}<s\le
\frac{r}{2^j}\). Since \(s\ge2\),  \(\lfloor r/2^{j+1}\rfloor+1\le s\le \min\{r-1,\lfloor r/2^j\rfloor\}\). Also,
\(\frac12K\beta w\le p_{w,q}(d,\beta)\le4K\beta w\) for every
\(2\le w\le q\). Since
\(\min\{r,\lfloor r/2^j\rfloor+1\}\le
2(\lfloor r/2^{j+1}\rfloor+1)\), it follows that
\(p_{\min\{r,\lfloor r/2^j\rfloor+1\},q}(d,\beta)
\le16p_{\lfloor r/2^{j+1}\rfloor+1,q}(d,\beta)\).
Whenever \(s\) belongs to this range, we have
\(p_{\lfloor r/2^{j+1}\rfloor+1,q}(d,\beta)
\le p_{s,q}(d,\beta)\le |A_i|\). If \(s\le r-2\), then the maximality
of \(s\) gives
\(|A_i|<p_{s+1,q}(d,\beta)\le
p_{\min\{r,\lfloor r/2^j\rfloor+1\},q}(d,\beta)\). If \(s=r-1\),
then, since the argument is needed only while \(|A_i|\le |F|/2\), we
have \(|A_i|\le |F|/2\le p_{r,q}(d,2\beta)/2
\le p_{r,q}(d,\beta)
=p_{\min\{r,\lfloor r/2^j\rfloor+1\},q}(d,\beta)\). Here the
penultimate inequality follows from
\(\lceil2x\rceil\le2\lceil x\rceil\). Hence, throughout this dyadic
range,
\(p_{\lfloor r/2^{j+1}\rfloor+1,q}(d,\beta)\le |A_i|
\le16p_{\lfloor r/2^{j+1}\rfloor+1,q}(d,\beta)\).
When \(r=2\), the definition forces \(s=1\) at every stage. Hence the
preceding argument applies throughout and, within \(b\) further rounds,
either \(|A_j|>|F|/2\) for some \(j\), or
\(|A_{i+b}|\ge p_{2,q}(d,\beta)\ge |F|/2\). In the latter case, one
further round, if necessary, makes the set larger than \(|F|/2\).

For the \(j\)th dyadic range, set
\(\varepsilon_j:=\frac{\eta}{2}(j+1)/\log(2q)\).
By \eqref{eq:BFS-medium} and \eqref{eq:range-expansion}, the reachable
set increases in each round by a factor of at least
\(1+\varepsilon_j\). Since its size needs to increase by a factor of
at most \(16\) within this range, the number \(N_j\) of corresponding
rounds satisfies
$
N_j\le\left\lceil\frac{\log16}{\log(1+\varepsilon_j)}\right\rceil.
$
As \(\varepsilon_j<1\), we have
\(\log(1+\varepsilon_j)\ge\varepsilon_j/2\). Thus, using \(\eta=10^{-4}\),
\(N_j\le\frac{4\log16}{\eta}\frac{\log(2q)}{j+1}+1
\le\frac{111000\log(2q)}{j+1}+1\).
Set \(J:=\lceil\log_2r\rceil\). Since
$
\sum_{j=0}^{J}\frac1{j+1}\le4\log\log(4r)
$
and
$
J+1\le4\log(2q)\log\log(4r),
$
the total number of rounds corresponding to all dyadic ranges is at
most
$
\sum_{j=0}^{J}N_j
\le450000\log(2q)\log\log(4r).
$

The initial small-set stage and the case \(s=1\) require at most
\(150000\log(2\beta)\) rounds. Hence each of the two sequences reaches
more than half of the vertices of \(F\) within
$
150000\log(2\beta)
+450000\log(2q)\log\log(4r)
\le\frac{E(r,q,\beta)}2
$
rounds. 
Define \(B_0:=B\) and
\(B_{i+1}:=(B_i\cup N_F(B_i))\setminus Z\). The same argument applies
to the sequence \(B_i\). Hence there are \(a',b'\le E(r,q,\beta)/2\)
such that \(|A_{a'}|>|F|/2\) and \(|B_{b'}|>|F|/2\). These two sets
intersect. Since \(A_i\) and \(B_i\) are the sets reachable from
\(A\) and \(B\), respectively, by paths of length at most \(i\) in
\(F-Z\), their intersection gives an \(A\)--\(B\) path in \(F-Z\) of
length at most \(a'+b'\le E(r,q,\beta)\).
\end{proof}

The next lemma is a variable-rate analogue of Lemma~4.5 in~\cite{HHKL}.
It gives a short odd cycle while preserving expansion into the remaining vertices.
\begin{lemma}\label{lem:compact-odd}
Let $\beta\ge qd$. Suppose that $F$ is non-bipartite, that $F$
multi-$(d,r,q,\beta,20)$-expands into $W$ and $|F|\le p_{r,q}(d,2\beta)$. Then $F$ contains an odd cycle $C$ with
$
 |C|\le2E(r,q,\beta)+1
$
such that $F$ multi-$(d,r,q,\beta,16)$-expands into $W\setminus V(C)$.
\end{lemma}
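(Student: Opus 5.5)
To prove Lemma~\ref{lem:compact-odd}, I will first find a short odd cycle, and then check that removing it costs little expansion. The cycle comes from a BFS argument as in Lemma~\ref{lem:short-path}. The expansion check compares what $V(C)$ removes from $W$ against the slack between the constants $20$ and $16$ in condition (i); condition (ii) is untouched.

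\textbf{Step 1: the odd cycle.} Fix any vertex $v$ and grow balls $B_i:=\{u:\mathrm{dist}_F(v,u)\le i\}$. The BFS estimates in the proof of Lemma~\ref{lem:short-path}, with $Z=\varnothing$ and $A=\{v\}$, show that $|B_i|>|F|/2$ once $i\ge E(r,q,\beta)/2$. Let $R:=\lceil E(r,q,\beta)/2\rceil$. I will argue that $F[B_R]$ is non-bipartite.

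The route is via BFS layers. If some edge joins two vertices at the same distance $i\le R$ from $v$, the two BFS paths back to $v$ together with that edge form a closed walk of length $2i+1$. This walk contains an odd cycle of length at most $2R+1\le 2E(r,q,\beta)+1$.

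If no such edge exists for any vertex $v$, then every ball of radius $R$ is bipartite with respect to BFS parity. Take two vertices $x,y$ joined by an edge of an odd cycle of $F$, which exists because $F$ is non-bipartite. Their radius-$R$ balls each have more than $|F|/2$ vertices, so they intersect. Combining the parities along paths through a common vertex with the odd cycle should produce a short parity conflict, giving an odd closed walk of length at most $2E(r,q,\beta)+1$.

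A cleaner alternative is the standard argument for a \emph{shortest} odd cycle $C$. Its vertices lie within distance $|C|/2$ of any one of them, and a shortest odd cycle longer than $2E+1$ would force the radius-$E/2$ ball around a vertex to be bipartite. Two such balls, one around each endpoint of an edge of $C$, meet in more than half of $F$. I expect to assemble a shorter odd closed walk from this intersection, giving $|C|\le 2E(r,q,\beta)+1$. I will use the shortest odd cycle.

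\textbf{Step 2: preserving expansion.} Set $W':=W\setminus V(C)$ and $L:=2E(r,q,\beta)+1$. Condition (ii) does not involve $W$, so it holds unchanged. For (i), take $|S|\le\beta$. Then
\[
|N_F(S)\cap W'|\ge 20|S|-|N_F(S)\cap V(C)|.
\]
It therefore suffices to show $|N_F(S)\cap V(C)|\le 4|S|$ for every such $S$. This is the main obstacle, because $C$ has size about $\log\beta$ while $S$ may be tiny.

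I will handle it using minimality of $C$. A vertex outside $C$ with neighbours $c_1,c_2$ on $C$ that are far apart along $C$ would create a shorter odd cycle. Hence every vertex of $S\setminus V(C)$ has at most $2$ or $3$ neighbours on $C$. Vertices of $S\cap V(C)$ have at most $2$ neighbours on $C$ besides themselves, since $C$ is induced. Together these give $|N_F(S)\cap V(C)|\le 3|S|<4|S|$.

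If the local bound is delicate in some case, I will instead take $C$ to minimise $|C|$ and, among those, a secondary potential. Alternatively I will follow Lemma~4.5 of \cite{HHKL}: pick the cycle via BFS from a vertex in a set that is robust against this loss, and verify that removing $V(C)$ destroys at most $4|S|$ of the neighbourhood of any small $S$.
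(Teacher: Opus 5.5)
Your overall plan is the paper's: take a shortest odd cycle $C$, bound $|C|$ via the diameter bound from Lemma~\ref{lem:short-path} with $Z=\varnothing$, show every vertex has at most three neighbours on $C$, and conclude $20|S|-3|S|\ge 16|S|$ with condition (ii) untouched. Step~2 is essentially the paper's argument and is fine. You should also exclude two \emph{consecutive} neighbours on $C$ via a triangle, since for $|C|=5$ no two vertices of $C$ are ``far apart''.

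The gap is in Step~1: the bound $|C|\le 2E(r,q,\beta)+1$ is never proved. You say you ``expect to assemble a shorter odd closed walk'' from $B_R(x)\cap B_R(y)$ for the endpoints $x,y$ of an edge of $C$, but this cannot work. For a common vertex $w$, the closed walk $x\to w\to y\to x$ is odd only when $\mathrm{dist}_F(x,w)=\mathrm{dist}_F(y,w)$. Under your standing assumption $|C|>2R+1$, every common vertex automatically has $|\mathrm{dist}_F(x,w)-\mathrm{dist}_F(y,w)|=1$, so the intersection only ever yields even closed walks and never a contradiction. Your first BFS-layer route fares no better. It produces \emph{some} odd cycle of length at most $2R+1$ only when a same-level edge exists, and that cycle need not be a shortest one, which Step~2 requires. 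The other case is left vague.

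The missing idea is to use almost-antipodal vertices of $C$ rather than adjacent ones.
\begin{itemize}
\item Pick $x,y\in V(C)$ whose two $x$--$y$ arcs on $C$ have lengths $(|C|-1)/2$ and $(|C|+1)/2$.
\item Let $P$ be an $x$--$y$ path of length at most $E$, which exists by Lemma~\ref{lem:short-path} with $A=\{x\}$, $B=\{y\}$, $Z=\varnothing$.
\item The two closed walks formed by $P$ and each arc have lengths of different parity. The odd one has length at most $E+(|C|+1)/2$, which is less than $|C|$ whenever $|C|>2E+1$.
\item An odd closed walk contains an odd cycle no longer than itself, contradicting the minimality of $C$.
\end{itemize}
Equivalently, as in the paper, a shortest odd cycle satisfies $\mathrm{dist}_F(u,v)=\mathrm{dist}_C(u,v)$ for all $u,v\in V(C)$, so $(|C|-1)/2=\mathrm{dist}_F(x,y)\le E$. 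With this step supplied, your proof goes through.
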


\begin{proof}
Choose an odd cycle \(C\subseteq F\) of minimum length in \(F\). We first note that
\(\text{dist}_F(x,y)=\text{dist}_C(x,y)\) for every \(x,y\in V(C)\), where \(\text{dist}_G(x,y)\) denotes the distance between \(x\) and \(y\) in \(G\).
Applying Lemma~\ref{lem:short-path} with \(Z=\varnothing\), we see that any two vertices of \(F\) are connected by a path of length at most \(E(r,q,\beta)\). Choose two almost-antipodal vertices \(x,y\in V(C)\). It follows that
$
 \frac{|C|-1}{2}
 =\text{dist}_C(x,y)
 =\text{dist}_F(x,y)
 \le E(r,q,\beta),
$
and hence \(|C|\le2E(r,q,\beta)+1\).

We next show that every vertex of \(F\) has at most three neighbors
on \(C\). This is immediate when \(C\) is a triangle. Suppose that
\(|C|\ge5\) and that some vertex \(v\) has at least three neighbors
on \(C\). If two of these neighbors are consecutive on \(C\), then
they form a triangle with \(v\), a contradiction to the choice of \(C\). Otherwise, two of them have distance at least three on \(C\), whereas they are connected by a path of length two through \(v\), a contradiction to \(\text{dist}_F(x,y)=\text{dist}_C(x,y)\).
Consequently, for every \(S\subseteq V(F)\), we have
$
 |N_F(S)\cap V(C)|\le3|S|.
$
If \(|S|\le\beta\), then 
$
 |N_F(S)\cap(W\setminus V(C))|
 \ge |N_F(S)\cap W|-|N_F(S)\cap V(C)|
 \ge20|S|-3|S|
 \ge16|S|.
$
For \(\beta\le|S|\le|F|/2\), the second expansion condition does not
depend on \(W\), and so it remains unchanged. Thus \(F\)
multi-\((d,r,q,\beta,16)\)-expands into \(W\setminus V(C)\).
\end{proof}

%{\color{red} Expansion in old references.}
\begin{definition}\label{def:rooted-set}
Let $F$ be a graph, let $x\in V(F)$, and let $R\ge1$ be an integer. A set
$A\subseteq V(F)$ is an \textit{$(x,R)$-rooted set} if $x\in A$, $|A|=R$, and every vertex of $A$ is joined to $x$ by a path in $F[A]$ of length at most $\lceil\log_8R\rceil$.
\end{definition}

The following lemma establishes the existence of pairwise disjoint rooted sets for up to four roots.
\begin{lemma}\label{lem:multi-root}
Suppose that $F$ multi-$(d,r,q,\beta,16)$-expands into $W$. Let
$1\le t\le4$, let $x_1,\ldots,x_t$ be distinct vertices, and let
$R\le\beta/t$. Then there are pairwise disjoint sets
$A_1,\ldots,A_t\subseteq W\cup\{x_1,\ldots,x_t\}$ such that $A_i$ is an $(x_i,R)$-rooted set.
\end{lemma}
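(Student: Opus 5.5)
The plan is to grow the $t$ rooted sets simultaneously, in breadth-first rounds. At each round every unfinished set grows by a factor of $8$, and the new vertices are distributed among the sets using Hall's theorem. Start with $A_i^{(0)}:=\{x_i\}$ for $i\in[t]$. After round $j$ we maintain the following invariants:
\begin{itemize}[label=--]
\item the sets $A_1^{(j)},\dots,A_t^{(j)}$ are pairwise disjoint and contained in $W\cup\{x_1,\ldots,x_t\}$;
\item $|A_i^{(j)}|=\min\{8^j,R\}$ for every $i$;
\item every vertex of $A_i^{(j)}$ is joined to $x_i$ by a path of length at most $j$ inside $F[A_i^{(j)}]$.
\end{itemize}
After $J:=\lceil\log_8R\rceil$ rounds every set has size exactly $R$, with radius at most $\lceil\log_8R\rceil$, so $A_i:=A_i^{(J)}$ is an $(x_i,R)$-rooted set. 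Note that the initial sets are disjoint because the $x_i$ are distinct, and all later vertices will be taken from $W$.

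For the round $j\to j+1$, let $U:=\bigcup_{i}A_i^{(j)}$ and let $I_0$ be the set of indices $i$ with $|A_i^{(j)}|=8^j<R$. Each $i\in I_0$ demands $\delta:=\min\{7\cdot 8^j,\,R-8^j\}$ new vertices. We form an auxiliary bipartite graph. On one side we place $\delta$ copies of each index $i\in I_0$. On the other side we place $W\setminus U$. A copy of $i$ is joined to $v$ whenever $v\in N_F(A_i^{(j)})$. A matching saturating all copies gives, for each $i\in I_0$, a set of $\delta$ new vertices, each adjacent to $A_i^{(j)}$. These new vertex sets are disjoint from one another and from $U$, and they lie in $W$. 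Adding them preserves all three invariants, since each new vertex is at distance at most $j+1$ from $x_i$ through $A_i^{(j)}$. Sets that are already full are left unchanged.

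The key step is verifying Hall's condition. Fix a nonempty $I\subseteq I_0$ with $|I|=m$, and put $S:=\bigcup_{i\in I}A_i^{(j)}$. Then $|S|=m8^j\le tR\le\beta$, so condition (i) of Definition~\ref{def:compact-expansion} with $c=16$ gives $|N_F(S)\cap W|\ge16m8^j$. Since $N_F(S)$ is disjoint from $S$, we only lose the vertices of $U\setminus S$. Every set has size at most $8^j$, because full sets have size $R\le 8^j$. Hence $|U\setminus S|\le(t-m)8^j\le 3\cdot 8^j$. Therefore
\[
|N_F(S)\cap (W\setminus U)|\ge 16m8^j-3\cdot8^j\ge 13m8^j\ge 7m8^j\ge m\delta,
\]
which is exactly Hall's condition for the copies of the indices in $I$. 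Any family of copies involves some set $I$ of indices and at most $|I|\delta$ copies, so this bound suffices.

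I expect the main obstacle to be exactly this accounting. Expansion is only guaranteed for sets of size at most $\beta$; this is why we need $R\le\beta/t$ and why all sets are grown in lockstep. The lockstep growth also ensures that the vertices occupied by the other sets are a bounded multiple (at most $t-1\le3$) of the sizes of the sets in $I$. The other point to check is that the root vertices may lie outside $W$. This is harmless, since only the roots are allowed outside $W$ and every later vertex is chosen from $W$.
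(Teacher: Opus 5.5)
Your proposal is correct and follows essentially the same argument as the paper. Both grow all $t$ sets in lockstep by a factor of $8$ per round, distribute the new vertices by Hall's theorem applied to copies of each index, and verify Hall's condition from the small-set expansion $|N_F(S)\cap W|\ge 16|S|$, valid since $|S|\le tR\le\beta$. The only differences are cosmetic: you request exactly $R-8^j$ vertices in the final round, whereas the paper matches $7u$ vertices and then truncates, and you subtract only $|U\setminus S|\le 3\cdot 8^j$ rather than all of $|U|=t\,8^j$.
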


\begin{proof}
If \(R=1\), take \(A_i=\{x_i\}\) for every \(i\).
Hence assume that \(R\ge2\).
We construct the sets $A_1, \dots, A_t$ iteratively. For each $i \in [t]$, set $A_i^{(0)} = \{x_i\}$. Suppose inductively that after step $j \ge 0$, the sets $A_1^{(j)}, \dots, A_t^{(j)}$ are pairwise disjoint, each of order $u = 8^j < R$, and that every vertex in $A_i^{(j)}$ has distance at most $j$ from $x_i$ in $F[A_i^{(j)}]$.

Set $U = \bigcup_{i=1}^t A_i^{(j)}$. Since $|U| = tu < tR \le \beta$, for every index subset $I \subseteq [t]$, the union $\bigcup_{i \in I} A_i^{(j)}$ has order $|I|u < \beta$. Thus, $|N_F(\bigcup_{i \in I} A_i^{(j)}) \cap W| \ge 16 |I| u$ and  $|\bigcup_{i \in I} ((N_F(A_i^{(j)}) \cap W) \setminus U)| \ge 16 |I| u - |U| = 16 |I| u - t u \ge 12 |I| u > 7 |I| u$, where the third inequality follows from $t \le 4 \le 4|I|$.

Let \(\Gamma\) be the bipartite graph with vertex classes
\([t]\times[7u]\) and \(W\setminus U\), where
\(N_\Gamma(i,h)=(N_F(A_i^{(j)})\cap W)\setminus U\) for every
\((i,h)\in[t]\times[7u]\). For any
\(\mathcal X\subseteq[t]\times[7u]\), let
\(I:=\{i\in[t]:\mathcal X\cap(\{i\}\times[7u])\ne\varnothing\}\).
Then
$
 N_\Gamma(\mathcal X)
 =
 \bigcup_{i\in I}\bigl((N_F(A_i^{(j)})\cap W)\setminus U\bigr),
$
and hence \(|N_\Gamma(\mathcal X)|\ge7u|I|\ge|\mathcal X|\). By Hall's theorem, there exists a matching in \(\Gamma\) covering \([t]\times[7u]\). For each \(i\in[t]\), let \(N_i^{(j)}\) be the set of vertices matched to \((i,1),\ldots,(i,7u)\). Then the sets \(N_1^{(j)},\ldots,N_t^{(j)}\)
are pairwise disjoint,
\(N_i^{(j)}\subseteq(N_F(A_i^{(j)})\cap W)\setminus U\), and
\(|N_i^{(j)}|=7u\) for every \(i\in[t]\).

Set \(A_i^{(j+1)}:=A_i^{(j)}\cup N_i^{(j)}\) for each \(i\in[t]\).
These sets are pairwise disjoint and each has order \(8u\). Moreover, every vertex of \(A_i^{(j+1)}\) is joined to \(x_i\) by a path in \(F[A_i^{(j+1)}]\) of length at most \(j+1\).
Continue in this way until \(u<R\le8u\). For each \(i\in[t]\), choose \(M_i^{(j)}\subseteq N_i^{(j)}\) with \(|M_i^{(j)}|=R-u\), and set \(A_i:=A_i^{(j)}\cup M_i^{(j)}\). Then the sets
\(A_1,\ldots,A_t\) are pairwise disjoint and each has order \(R\).
The construction takes \(\lceil\log_8R\rceil\) steps, and every vertex of \(A_i\) is joined to \(x_i\) by a path in \(F[A_i]\) of length at most \(\lceil\log_8R\rceil\). Hence \(A_i\) is an
\((x_i,R)\)-rooted set for every \(i\in[t]\).
\end{proof}

The next lemma iteratively applies Lemma~\ref{lem:short-path} to build a path of length within a given range.

\begin{lemma}\label{lem:coarse-path}
Let \(\mathcal{J}\) be a complete \(q\)-partite graph with each part of size in
\([d,2d]\), let \(J\subseteq\mathcal{J}\) have \(r\ge2\) parts, and let
\(F_0\) satisfy
\begin{enumerate}[label=\textup{(\roman*)}]
\item \(\ol{F_0}\) is \(J\)-free and
$
 p_{r,q}(d,2\beta)-2d\le |F_0|\le p_{r,q}(d,2\beta);
$
\item \(F_0\) multi-\((d,r,q,\beta,16)\)-expands into \(W\);
\item \(\beta\ge qd\) and
$
 4E(r,q,\beta)\le R:=\lfloor\beta/1000\rfloor.
$
\end{enumerate}
Let \(A,B\) be disjoint \((x,R)\)-rooted set and \((y,R)\)-rooted set.  Let
\(Z\subseteq V(F_0)\setminus(A\cup B)\), and let \(T\ge0\) be an integer
satisfying
\begin{align}
 |Z|+T+4E(r,q,\beta)+2R&\le5\beta,
 \label{eq:coarse-size}\\
 |Z\cap W|+T+4E(r,q,\beta)&\le12R.
 \label{eq:coarse-W}
\end{align}
Then \(F_0-Z\) contains an \(x\)--\(y\) path \(P\) with
$
 T\le\ell(P)\le T+4E(r,q,\beta).
$
\end{lemma}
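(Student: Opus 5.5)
We build the path greedily, growing a path from $x$ in short steps until its length first reaches $T$, and then close it to $y$ through the rooted set $B$ with one more short path. Concretely, maintain an $x$--$v$ path $P_i$ in $F_0-Z$ that avoids $B$ and has length $\ell_i$. Start with $P_0=\{x\}$. While $\ell_i<T$, do one extension step. Set $Z_i:=Z\cup V(P_i)\setminus\{v\}\cup B$, and apply Lemma~\ref{lem:short-path} in $F_0$ with $A=\{v\}$, some target set, and forbidden set $Z_i$. This gives a path of length at most $E:=E(r,q,\beta)$ that extends $P_i$ by at least one edge.

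There is a problem with taking $A=\{v\}$: the hypothesis $|Z\cap W|\le 14\min\{|A|,|B|\}$ then fails. The fix is to extend from a fresh rooted set rather than a single vertex. At the current endpoint $v$, use Lemma~\ref{lem:multi-root} to take a $(v,R)$-rooted set $A_v$ inside $W\setminus(\text{used})\cup\{v\}$. Then apply Lemma~\ref{lem:short-path} between $A_v$ and a second fresh rooted set (or between $A_v$ and $B$ at the final step). This gives a path of length at most $E+2\lceil\log_8R\rceil$, which is at most $2E$ or so, and it extends the current path. Here $|Z_i\cap W|\le |Z\cap W|+T+O(E)\le 12R\le 14|A_v|$ by \eqref{eq:coarse-W}, and $|Z_i|\le 5\beta$ by \eqref{eq:coarse-size}. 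The rooted-set existence requires expansion into $W$ minus the used vertices. This follows the way Lemma~\ref{lem:compact-odd} loses expansion: the used set has size $O(T+E)\le O(R)$, which is small compared with $16|S|$ only when $|S|$ is large. I would therefore rather grow the rooted sets directly by the BFS/Hall argument of Lemma~\ref{lem:multi-root}, removing the used set from the available neighbours at each layer. The bound $12R$ leaves enough room, since the growth needed at layer size $u$ is $7u$ and $16u-12R$ is too small for small $u$. This counting is the main obstacle.

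To avoid it, I would use a cleaner alternative. Only the two given rooted sets $A$ and $B$ enter Lemma~\ref{lem:short-path}, and the length is built up by detours that are joined to $A$ itself. Keep the endpoint in $A$: repeatedly choose a short $A$--$A$ detour? That does not produce a path. Instead, take $t=2$ extra roots at each step via Lemma~\ref{lem:multi-root}, applied before anything is used, and assemble the path as follows. Choose a sequence of vertices and connect consecutive rooted sets $A_{i}$ to $A_{i+1}$ by paths from Lemma~\ref{lem:short-path} that avoid everything used so far. Each connecting segment from root to root has length between $1$ and $E+2\lceil\log_8R\rceil\le 2E$, because $\lceil\log_8R\rceil\le E/2$. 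Stop as soon as the accumulated length is at least $T-4E$. At that point the length lies in $[T-4E,T-2E)$.

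Finally we close up to $y$. For this step, pad to reach at least $T$ using the observation that an overshoot of at most $2E$ per step allows us to land the length in $[T,T+2E]$ before closing. The last connection to $B$ and the walk inside $B$ to $y$ cost at most $2E$ more. This gives total length in $[T,T+4E]$, as required. Throughout the construction, the used vertices number at most $|Z|+T+4E+2R$, which is within the budget of \eqref{eq:coarse-size}. The used vertices inside $W$ number at most $|Z\cap W|+T+4E\le 12R\le 14R$, which is within the budget of \eqref{eq:coarse-W}. These are exactly the hypotheses of Lemma~\ref{lem:short-path} with source set a rooted set of size $R$. The hardest step will be keeping fresh rooted sets available at each new endpoint while respecting these budgets.
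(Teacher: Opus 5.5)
Your proposal has a genuine gap, and it is the one you flag yourself as ``the main obstacle'': you never produce a fresh $R$-rooted set at each new endpoint. Growing one at the current endpoint $v$ inside $W$ minus the used vertices cannot work in general. As you observe, $16u-12R$ can be negative for small $u$, and in fact every neighbour of $v$ in $W$ may already be used. Your ``cleaner alternative'' of taking the roots via Lemma~\ref{lem:multi-root} ``before anything is used'' is not available either. That lemma gives at most four rooted sets, of total order at most $\beta$, while the number of extension steps is not bounded by a constant, since a step may add only a few edges.

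The missing idea uses the $J$-freeness of $\overline{F_0}$ and the lower bound on $|F_0|$ in hypothesis~(i), which your argument never uses. At each step the paper sets $U:=Z\cup V(Q)\cup D\cup B$, where $Q$ is the current path and $D$ is the current rooted set at its endpoint $z$. Then $|U|<|Z|+T+2R\le5\beta$ by \eqref{eq:coarse-size}, so $|F_0-U|\ge p_{r,q}(d,2\beta)-2d-5\beta\ge p_{r,q}(d,\beta)$, and $\overline{F_0-U}$ is still $J$-free. Lemma~\ref{lem:expanding-subgraph} therefore gives a fresh expander $F'\subseteq F_0-U$. Lemma~\ref{lem:multi-root} is then applied inside $F'$, using expansion into $V(F')$ rather than into $W$, so previously used vertices never interfere with growing rooted sets.

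The second missing ingredient is how the segments fit together into a path. If you join $A_i$ to $A_{i+1}$ and then $A_{i+1}$ to $A_{i+2}$, the walk into the root of $A_{i+1}$ and the walk out of it both lie in $A_{i+1}$ and can intersect, so the union need not be a path. The paper handles this as follows.
\begin{itemize}
\item It uses $\delta(F')\ge20$ to choose an edge $uv$ of $F'$ and takes disjoint rooted sets $D_u,D_v$ at its ends.
\item Lemma~\ref{lem:short-path} joins $D$ to $D_u$ while avoiding $Z\cup(V(Q)\setminus\{z\})\cup B\cup D_v$.
\item The path is extended inside $D$ from $z$, inside $D_u$ to $u$, and across $uv$.
\item $D_v$ becomes the new current rooted set, meeting the path only in its root $v$.
\end{itemize}
Since the avoided set contains $B$ and $D_v$, the correct $W$-budget is $|Z\cap W|+\ell(Q)+2R<14R$, not the count $|Z\cap W|+T+4E$ you give.

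With these two ideas in place, the rest of your plan matches the paper: iterate while the length is below $T$, each step adding at most $2E$ edges, then close through $B$ at a further cost of at most $2E$. Your detour of stopping at $T-4E$ and then padding is unnecessary.
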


\begin{proof}
Write \(E:=E(r,q,\beta)\). Since \(R\le\beta\), we have
\(2\lceil\log_8R\rceil+2\le
2\lceil\log_8(\beta+1)\rceil+2\le7\log(2\beta)\le E\).
We construct a path \(Q\) starting at \(x\), together with an
\((z,R)\)-rooted set \(D\), where \(z\) is the other endvertex of
\(Q\). At every stage,
\(Q\cap D=\{z\}\), \(D\cap(Z\cup B)=\varnothing\), and
\(Q\cap(Z\cup B)=\varnothing\). Initially, \(Q\) is the trivial path
at \(x\), and \(D=A\).

Suppose that \(\ell(Q)<T\), and set
\(U:=Z\cup V(Q)\cup D\cup B\). Since \(\ell(Q)\le T-1\), we have
$
|U|\le|Z|+\ell(Q)+2R<|Z|+T+2R\le5\beta
$
by \eqref{eq:coarse-size}. Hence
$
|F_0-U|\ge p_{r,q}(d,2\beta)-2d-5\beta
\ge p_{r,q}(d,\beta)+15\beta\ge p_{r,q}(d,\beta).
$
Moreover, \(\ol{F_0-U}\) does not contain \(J\). By
Lemma~\ref{lem:expanding-subgraph}, there are \(2\le r'\le r\) and an
induced subgraph \(F'\subseteq F_0-U\) such that \(F'\) multi-\((d,r',q,\beta,20)\)-expands. In particular,
\(\delta(F')\ge20\), so choose an edge \(uv\in E(F')\). Since
\(R\le\beta/2\), Lemma~\ref{lem:multi-root}, applied in \(F'\) with
roots \(u,v\), gives disjoint \((u,R)\)-rooted set \(D_u\) and \((v,R)\)-rooted set
\(D_v\).
Apply Lemma~\ref{lem:short-path} in \(F_0\) from \(D\) to \(D_u\),
avoiding \(C:=Z\cup(V(Q)\setminus\{z\})\cup B\cup D_v\). Also, \(C\) is disjoint from \(D\cup D_u\).
Moreover,
$
|C|\le|Z|+\ell(Q)+2R<|Z|+T+2R\le5\beta
$
and
$
|C\cap W|\le|Z\cap W|+\ell(Q)+2R
<|Z\cap W|+T+2R\le14R,
$
where the last inequality follows from \eqref{eq:coarse-W}.

Orient the resulting path \(P_0\) from \(D\) to \(D_u\). Let \(a\) be
its last vertex in \(D\), let \(b\) be the first vertex of \(D_u\)
after \(a\), and let \(P_0'\) be the \(a\)--\(b\) subpath of \(P_0\).
Then \(V(P_0')\cap D=\{a\}\) and
\(V(P_0')\cap D_u=\{b\}\).
Choose a \(z\)--\(a\) path \(Q_D\) in \(F_0[D]\) and a \(b\)--\(u\)
path \(Q_u\) in \(F_0[D_u]\), each of length at most
\(\lceil\log_8R\rceil\). Then
\(Q\cup Q_D\cup P_0'\cup Q_u\cup uv\) is an \(x\)--\(v\) path, and
the number of edges added to \(Q\) is at most
\(E+2\lceil\log_8R\rceil+1\le2E\). Replace \(Q\) by this
path and \(D\) by \(D_v\).

Continue until \(\ell(Q)\ge T\) for the first time. Since the final
extension adds at most \(2E\) edges, we have
\(T\le\ell(Q)\le T+2E\).
Apply Lemma~\ref{lem:short-path} from \(D\) to \(B\), avoiding
\(C':=Z\cup(V(Q)\setminus\{z\})\). By the invariants, \(C'\) is
disjoint from \(D\cup B\). Moreover,
$
|C'|\le|Z|+\ell(Q)\le|Z|+T+2E\le5\beta
$
and
$
|C'\cap W|\le|Z\cap W|+\ell(Q)
\le|Z\cap W|+T+2E\le12R\le14R.
$
Hence there is a \(D\)--\(B\) path \(P_1\) of length at most \(E\)
avoiding \(C'\).
Orient \(P_1\) from \(D\) to \(B\). Let \(a_1\) be its last vertex in
\(D\), let \(b_1\) be the first vertex of \(B\) after \(a_1\), and let
\(P_1'\) be the \(a_1\)--\(b_1\) subpath of \(P_1\). Choose a
\(z\)--\(a_1\) path in \(F_0[D]\) and a \(b_1\)--\(y\) path in
\(F_0[B]\), each of length at most \(\lceil\log_8R\rceil\). Together
with \(Q\) and \(P_1'\), these paths form an \(x\)--\(y\) path \(P\)
in \(F_0-Z\), see Figure \ref{fig:coarse-path}. The final extension has length at most
\(E+2\lceil\log_8R\rceil\le2E\). Therefore
$
T\le\ell(P)\le T+4E=T+4E(r,q,\beta),
$
as required.
\end{proof}

\begin{figure}[t]
\centering
\resizebox{.96\linewidth}{!}{%
\begin{tikzpicture}[
  x=1cm,
  y=1cm,
  every node/.style={font=\scriptsize},
  vertex/.style={circle,fill=black,inner sep=1.1pt},
  rooted/.style={draw=black,rounded corners=3pt,line width=.7pt},
  mainpath/.style={draw=black,line width=.9pt},
  auxpath/.style={draw=black,dashed,line width=.8pt}
]

% initial path Q
\node[vertex,label=above:$x$] (x) at (0.4,1.55) {};
\draw[mainpath] (x)--(1.3,1.55)--(2.1,1.15)--(2.9,1.95)--(3.7,1.55);
\node at (2.1,2.2) {$Q$};

% first active rooted set D
\draw[rooted] (3.4,0.95) rectangle (4.9,2.15);
\node at (4.15,2.4) {$D$};
\node[vertex,label=above:$z$] (z) at (3.85,1.55) {};
\node[vertex,label=below:$a$] (a) at (4.55,1.28) {};
\draw[mainpath] (z)--(a);

% D_u
\draw[rooted] (5.9,0.95) rectangle (7.4,2.15);
\node at (6.65,2.4) {$D_u$};
\node[vertex,label=below:$b$] (b) at (6.15,1.28) {};
\node[vertex,label=above:$u$] (u) at (6.85,1.55) {};
\draw[mainpath] (b)--(u);

% short path P_0'
\draw[mainpath] (a)--(5.1,0.68)--(5.7,0.96)--(b);
\node at (5.4,0.33) {$P_0'$};

% edge uv and D_v
\node[vertex,label=above:$v$] (v) at (8.0,1.55) {};
\draw[mainpath] (u)--(v);
\draw[rooted] (7.65,0.95) rectangle (9.15,2.15);
\node at (8.40,2.4) {$D_v$};

% continuation
\draw[auxpath] (9.35,1.55)--(10.25,1.55);
\node at (10.7,1.55) {$\cdots$};
\draw[auxpath] (11.15,1.55)--(12.05,1.55);

% final active rooted set D
\draw[rooted] (12.3,0.95) rectangle (13.8,2.15);
\node at (13.05,2.4) {$D$};
\node[vertex,label=above:$z$] (z2) at (12.75,1.55) {};
\node[vertex,label=below:$a_1$] (a1) at (13.45,1.28) {};
\draw[mainpath] (z2)--(a1);

% B
\draw[rooted] (14.8,0.95) rectangle (16.3,2.15);
\node at (15.55,2.4) {$B$};
\node[vertex,label=below:$b_1$] (b1) at (15.05,1.28) {};
\node[vertex,label=above:$y$] (y) at (15.75,1.55) {};
\draw[mainpath] (b1)--(y);

% final short path P_1'
\draw[mainpath] (a1)--(13.95,0.68)--(14.55,0.96)--(b1);
\node at (14.25,0.33) {$P_1'$};

\end{tikzpicture}%
}
\caption{The desired path $P$. 
Starting from the current path \(Q\) ending at \(z\in D\), a short path \(P_0'\) joins \(D\) to a rooted set \(D_u\), and the edge \(uv\) moves the endpoint to the rooted set \(D_v\). Repeating this step produces a path of length at least \(T\). A final short path \(P_1'\) then joins the last rooted set \(D\) to \(B\), yielding the required \(x\)--\(y\) path.}
\label{fig:coarse-path}
\end{figure}

\section{Switchers}
To construct cycles covering a range of lengths, we shall need the following switching structure.

\begin{definition}\label{def:switch}
A \emph{switch} is a cycle $C$ with \textit{pivotal} vertices $v,w$.  Write $C^+$ and $C^-$ for its two $v$--$w$ paths, labeled so that
$\ell(C^+)>\ell(C^-)$.  The \emph{weight} of the switch is
$\delta(C)=\ell(C^+)-\ell(C^-)$.
\end{definition}
The following switcher structure is a generalization of an adjuster in \cite{HHKL}.
\begin{definition}\label{def:switcher}
A \emph{switcher} $F$ consists of pairwise vertex-disjoint switches $C_1,\ldots,C_r$, with pivotal vertices $v_i,w_i$, together with paths $P_1,\ldots,P_r$, where $P_i$ joins $w_i$ to $v_{i+1}$ and subscripts are taken cyclically. The paths are internally disjoint from one another and from the switches. A \emph{route} of \(F\) is the cycle obtained by using every \(P_i\) and,
for each \(i\), one of the two \(v_i\)--\(w_i\) paths in \(C_i\). The \emph{length} $\ell(F)$ of \(F\) is the length of the route using every long side $C_i^+$. A cycle is a switcher with no switches.

Orient the longest route according to the cyclic order of the switches. A \emph{block} is a nonempty consecutive set of switches. Its block subgraph is the union of these switches and the paths joining consecutive switches in the block. The \emph{order} of the block is the number of vertices in its block subgraph. Its \emph{support} is the corresponding subpath of the longest route, from the first pivotal vertex of its first switch to the second pivotal vertex of its last switch, see Figure \ref{fig:switcher-terminology}.

A block is \emph{complete} if the subset sums of its switch weights contain every integer between zero and the sum of its weights.

A \emph{protected switcher} is a switcher together with a partition of its switches into consecutive complete blocks whose supports are pairwise edge-disjoint. These blocks are called its \emph{protected blocks}. For a protected block \(B\), define
$
 \Omega(B):=\sum_{C\in B}\delta(C),
$
and define the \emph{flexibility} of \(F\) by
\[
 \Omega(F):=\sum_C\delta(C),
\]
where the sum is over all switches of \(F\). If \(F\) has no switches, the maximum protected-block order and flexibility are both taken to be zero.
\end{definition}

\begin{figure}[t]\label{f1}
\centering

\tikzset{
  every node/.style={font=\scriptsize},
  vertex/.style={circle,fill=black,inner sep=1.25pt},
  edge/.style={draw=black,line width=.45pt},
  route/.style={draw=blue!65!black,line width=.9pt},
  support/.style={draw=red!75!black,line width=1pt},
  blockbox/.style={draw=black!55,dashed,rounded corners=2pt,line width=.45pt},
  bleft/.style={draw=blue!55!black,fill=blue!5,rounded corners=3pt,line width=.45pt},
  bright/.style={draw=green!45!black,fill=green!5,rounded corners=3pt,line width=.45pt},
  pfive/.style={
    route,
    postaction={
      decorate,
      decoration={
        markings,
        mark=at position 0.84 with {\arrow{Stealth[length=1.5mm,width=1.2mm]}}
      }
    }
  }
}

%-------------------- (a) --------------------%
\begin{minipage}{\linewidth}
\centering
\begin{tikzpicture}[x=.92cm,y=.92cm,>=Stealth]
  \path[use as bounding box] (-.9,-2.05) rectangle (12.25,1.95);

  \node[font=\small] at (5.65,1.62) {switcher $F$};

  \foreach \i/\x in {1/0,2/2.35,3/4.70,4/7.05,5/9.40}{
    \coordinate (v\i) at (\x,0);
    \coordinate (w\i) at ({\x+1.45},0);
  }

  % switches
  \draw[route]   (v1) .. controls +(0,.72) and +(0,.72) .. (w1);
  \draw[edge]    (v1) .. controls +(0,-.62) and +(0,-.62) .. (w1);

  \draw[support] (v2) .. controls +(0,.72) and +(0,.72) .. (w2);
  \draw[edge]    (v2) .. controls +(0,-.62) and +(0,-.62) .. (w2);

  \draw[support] (v3) .. controls +(0,.72) and +(0,.72) .. (w3);
  \draw[edge]    (v3) .. controls +(0,-.62) and +(0,-.62) .. (w3);

  \draw[route]   (v4) .. controls +(0,.72) and +(0,.72) .. (w4);
  \draw[edge]    (v4) .. controls +(0,-.62) and +(0,-.62) .. (w4);

  \draw[route]   (v5) .. controls +(0,.72) and +(0,.72) .. (w5);
  \draw[edge]    (v5) .. controls +(0,-.62) and +(0,-.62) .. (w5);

  % joining paths
  \draw[route]   (w1)--(v2);
  \draw[support,-{Stealth[length=1.5mm,width=1.2mm]}] (w2)--(v3);
  \draw[route]   (w3)--(v4);
  \draw[route]   (w4)--(v5);

  % closing path P5
  \draw[pfive]
    (w5) .. controls +(0,-.55) and +(.55,0) .. (11.55,-1.75)
         -- (-.55,-1.75)
         .. controls +(-.55,0) and +(0,-.55) .. (v1);

  % block subgraph
  \draw[blockbox] (2.15,-.88) rectangle (6.35,.98);
  \node[fill=white,inner sep=1pt] at (4.25,1.00)
    {block subgraph of $B=\{C_2,C_3\}$};

  % support brace
  \draw[support,decorate,decoration={brace,mirror,amplitude=4pt}]
    (2.35,-1.15)--(6.15,-1.15)
    node[midway,yshift=-10pt,text=red!75!black] {$\operatorname{supp}(B)$};

  % labels
  \node at (.72,1.05) {switch $C_1$};

  \foreach \i in {1,...,5}{
    \node at ($(v\i)!.5!(w\i)+(0,.50)$) {$C_{\i}^{+}$};
    \node at ($(v\i)!.5!(w\i)+(0,-.48)$) {$C_{\i}^{-}$};
    \node[below=1pt] at (v\i) {$v_{\i}$};
    \node[below=1pt] at (w\i) {$w_{\i}$};
  }

  \node[above=1pt] at ($(w1)!.5!(v2)$) {$P_1$};
  \node[above=1pt] at ($(w2)!.5!(v3)$) {$P_2$};
  \node[above=1pt] at ($(w3)!.5!(v4)$) {$P_3$};
  \node[above=1pt] at ($(w4)!.5!(v5)$) {$P_4$};
  \node at (-.45,-1.43) {$P_5$};

  \foreach \i in {1,...,5}{
    \node[vertex] at (v\i) {};
    \node[vertex] at (w\i) {};
  }
\end{tikzpicture}

\smallskip
{\small (a)}
\end{minipage}

\vspace{0.8em}

%-------------------- (b) --------------------%
\begin{minipage}{\linewidth}
\centering
\begin{tikzpicture}[x=.92cm,y=.92cm,>=Stealth]
  \path[use as bounding box] (-.9,-2.05) rectangle (12.25,1.15);

  \foreach \i/\x in {1/0,2/2.35,3/4.70,4/7.05,5/9.40}{
    \coordinate (V\i) at (\x,0);
    \coordinate (W\i) at ({\x+1.45},0);
  }

  \begin{scope}[on background layer]
    \draw[bleft]  (-.30,-.83) rectangle (6.40,.81);
    \draw[bright] (6.82,-.83) rectangle (11.15,.81);
  \end{scope}

  \foreach \i in {1,...,5}{
    \draw[edge] (V\i) .. controls +(0,.58) and +(0,.58) .. (W\i);
    \draw[edge] (V\i) .. controls +(0,-.58) and +(0,-.58) .. (W\i);
    \node[vertex] at (V\i) {};
    \node[vertex] at (W\i) {};
    \node at ($(V\i)!.5!(W\i)+(0,.42)$) {$C_{\i}$};
  }

  \draw[edge] (W1)--(V2);
  \draw[edge] (W2)--(V3);
  \draw[edge] (W3)--(V4);
  \draw[edge] (W4)--(V5);

  \node[above=1pt] at ($(W1)!.5!(V2)$) {$P_1$};
  \node[above=1pt] at ($(W2)!.5!(V3)$) {$P_2$};
  \node[above=1pt] at ($(W3)!.5!(V4)$) {$P_3$};
  \node[above=1pt] at ($(W4)!.5!(V5)$) {$P_4$};

  \draw[blue!60!black,decorate,decoration={brace,mirror,amplitude=4pt}]
    (-.20,-1.05)--(6.30,-1.05)
    node[midway,yshift=-10pt,text=blue!60!black]
    {protected block $B_1=\{C_1,C_2,C_3\}$};

  \node[text=blue!60!black] at (3.05,-1.78)
    {weights $1,2,4$, \quad $\Omega(B_1)=7$};

  \draw[green!45!black,decorate,decoration={brace,mirror,amplitude=4pt}]
    (6.92,-1.05)--(11.05,-1.05)
    node[midway,yshift=-10pt,text=green!45!black]
    {protected block $B_2=\{C_4,C_5\}$};

  \node[text=green!45!black] at (8.98,-1.78)
    {weights $1,1$, \quad $\Omega(B_2)=2$};
\end{tikzpicture}

\smallskip
{\small (b)}
\end{minipage}

\caption{
(a) A switcher $F$, together with the block subgraph of the block $B=\{C_2,C_3\}$ and its support $\operatorname{supp}(B)$. The blue route is the longest route, and the red subpath indicates the support. 
(b) A protected switcher whose switches are partitioned into two protected blocks $B_1$ and $B_2$, together with their weights and flexibilities.}
\label{fig:switcher-terminology}
\end{figure}

If every switch is an odd cycle with almost-antipodal
vertices, then every switch has weight one. Subject to the bound on the cycle lengths in Definition~3.5 in \cite{HHKL}, this is the structure used in that paper.

The completeness of the protected blocks gives a full interval of route lengths.

\begin{lemma}\label{lem:interval-routes}
A protected switcher \(F\) contains a route of every integer length in
$
 [\ell(F)-\Omega(F),\ell(F)].
$
\end{lemma}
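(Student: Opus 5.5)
The plan is to compute the length of a route switch by switch, and then to combine the protected blocks one at a time. Fix a protected switcher \(F\) with switches \(C_1,\ldots,C_r\) and protected blocks \(B_1,\ldots,B_m\), which partition the switches. A route is determined by choosing, for each \(i\), either \(C_i^+\) or \(C_i^-\). Replacing \(C_i^+\) by \(C_i^-\) in the longest route decreases the length by exactly \(\delta(C_i)\). The switches are pairwise vertex-disjoint and the paths \(P_i\) are shared by every route, so these changes are independent and additive. Hence, if \(X\) is the set of switches where the short side is used, the corresponding route is a cycle of length
\[
\ell(F)-\sum_{C\in X}\delta(C).
\]
It remains to show that every integer \(t\in[0,\Omega(F)]\) equals \(\sum_{C\in X}\delta(C)\) for some set \(X\) of switches.

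We do this by a greedy decomposition over the blocks. Since the blocks partition the switches, \(\Omega(F)=\sum_{j=1}^m\Omega(B_j)\). Given \(t\), choose integers \(t_j\in[0,\Omega(B_j)]\) with \(\sum_j t_j=t\). For example, take
\[
t_j=\min\Bigl\{\Omega(B_j),\,t-\sum_{i<j}t_i\Bigr\}.
\]
By completeness of \(B_j\), there is a subset \(X_j\subseteq B_j\) with weight sum exactly \(t_j\). The sets \(X_j\) lie in disjoint blocks, so \(X:=\bigcup_j X_j\) is a set of switches with \(\sum_{C\in X}\delta(C)=t\). The route using the short sides exactly on \(X\) then has length \(\ell(F)-t\), as required.

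No step here is a real obstacle. The one point to confirm is that every such choice of sides actually gives a cycle, which holds because the switches are vertex-disjoint and the connecting paths are internally disjoint. The edge-disjointness of the supports is not needed for this lemma. It matters only later, when the switcher is spliced with other cycles.
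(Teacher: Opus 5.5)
Your proof is correct and takes the same approach as the paper. A route's length is $\ell(F)$ minus the total weight of the switches on which it uses the short side, completeness of each protected block gives every decrease in $[0,\Omega(B_j)]$, and your greedy choice of the $t_j$ makes explicit the paper's identity $[0,\Omega(B_1)]+\cdots+[0,\Omega(B_s)]=[0,\Omega(F)]$.
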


\begin{proof}
Let \(B_1,\ldots,B_s\) be the protected blocks. Since \(B_i\) is complete,
the switches in \(B_i\) can decrease the route length by any integer in
\([0,\Omega(B_i)]\). Hence the possible total decreases contain
$
 [0,\Omega(B_1)]+\cdots+[0,\Omega(B_s)]
 =
 \left[0,\sum_{i=1}^s\Omega(B_i)\right]
 =
 [0,\Omega(F)].
$
\end{proof}

Let \(\mathcal B(F)\) be the union of the protected block subgraphs.
A subpath \(Q\) of the longest route is \emph{free} if it is contained in every route and
$
 V(Q)\cap V(\mathcal B(F))=\varnothing.
$
A \emph{free family} is a collection \(\mathcal Q\) of pairwise
edge-disjoint free subpaths. Write
$
 W(\mathcal Q):=\sum_{Q\in\mathcal Q}\ell(Q),
$
and
$
 p(\mathcal Q):=|\mathcal Q|.
$
Since every replacement considered below uses only vertices of a free subpath, it cannot meet either side of a protected switch.

The following elementary observation allows us to shorten a free subpath without changing any switch.

\begin{lemma}\label{lem:shortcut}
Let $J$ be a complete $a$-partite graph of order $j$, where $a\ge2$, and
suppose that $\ol G$ is $J$-free. Then the following hold.
\begin{enumerate}[label=\textup{(\roman*)}]
\item Every induced path in $G$ has fewer than $j+a-1$ vertices.
\item Let \(F\) be a protected switcher in \(G\), and let \(Q\) be a
free subpath of \(F\). Suppose that
$
 \ell(Q)\ge j+a-2
$
and that every route of \(F\) has at least \(j+a\) edges. Then a subpath of \(Q\) can be replaced so that every route is shortened by the same
integer in
$
 [1,j+a-3],
$
while every switch remains unchanged.
\end{enumerate}
\end{lemma}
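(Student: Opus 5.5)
For part (i), I would let \(P=u_1u_2\cdots u_m\) be an induced path in \(G\) and assume \(m\ge j+a-1\). In \(\ol G\), two vertices \(u_i,u_{i'}\) with \(|i-i'|\ge2\) are adjacent, because \(P\) is induced. Let the parts of \(J\) have sizes \(m_1,\ldots,m_a\), so that \(\sum m_i=j\). I would embed \(J\) in \(\ol G\) by giving each part a consecutive block of \(P\), leaving one unused vertex between consecutive blocks. That uses \(m_1+1+m_2+1+\cdots+m_a=j+a-1\) vertices.

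Any two vertices in different blocks are at distance at least \(2\) along \(P\), so they are adjacent in \(\ol G\). Vertices inside the same block need no edges, since \(J\) has no edges inside a part. This gives a copy of \(J\) in \(\ol G\), a contradiction. Hence \(m<j+a-1\).

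For part (ii), I would take a subpath \(Q'\subseteq Q\) with exactly \(j+a-2\) edges, so \(j+a-1\) vertices, and write \(x,y\) for its endvertices. By (i), \(G[V(Q')]\) is not an induced path, so there is a chord \(u_iu_{i'}\) with \(i'\ge i+2\). Among all such chords I would choose one minimizing \(i'-i\); this choice is not actually needed.

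I would then replace the segment \(u_i\cdots u_{i'}\) of \(Q'\) by the single edge \(u_iu_{i'}\). This shortens \(Q\) by \(i'-i-1\), which lies in \([1,j+a-3]\) because \(i'-i\le j+a-2\). The new graph uses only vertices of \(V(Q)\), minus some deleted interior vertices.

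Since \(Q\) is free, it lies in every route and avoids \(V(\mathcal B(F))\). So the modification leaves every switch and every block untouched, and it shortens every route by the same amount. The switcher structure is preserved: the paths \(P_i\) stay internally disjoint from each other and from the switches, because only vertices are deleted and one edge between existing path vertices is added. The condition that every route has at least \(j+a\) edges is only used to guarantee that the shortened routes are still genuine cycles, of length at least \(3\).

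The only subtle point is that the chord really joins two vertices of \(Q\) and does not pass through switch vertices. That holds because both of its endvertices are on \(Q'\), so no obstacle is expected beyond this bookkeeping.
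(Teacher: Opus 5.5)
Your proof is correct and follows the paper's argument: part (i) is the same block-embedding of $J$ along the induced path. In part (ii) you likewise take a subpath $Q'$ with $j+a-2$ edges and use (i) to shortcut it within $V(Q')$; the only difference is that you splice in a single chord, whereas the paper replaces $Q'$ by a shortest $x$--$y$ path in $G[V(Q')]$. Both give a shortening in $[1,j+a-3]$, and in both the route-length hypothesis ensures that the complementary arc of each route has at least two edges.
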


\begin{proof}
Write the part sizes of \(J\) as \(b_1,\ldots,b_a\). If \(P\) is an
induced path on \(j+a-1\) vertices, then along \(P\) we may choose
consecutive sets \(B_1,\ldots,B_a\) with \(|B_i|=b_i\), leaving one
unused vertex between consecutive sets. Since \(P\) is induced, there are no edges between distinct \(B_i\), so these sets span a copy of \(J\) in \(\ol G\), a contradiction. This proves~(i).

For~(ii), let \(d:=j+a-2\), and choose a subpath \(Q'\subseteq Q\) with \(d\) edges and endvertices \(x,y\). Let \(P'\) be a shortest
\(x\)--\(y\) path in \(G[V(Q')]\). By~(i),
\(1\le \ell(P')\le d-1\). In every route, the complementary
\(x\)--\(y\) path has at least two edges and is internally disjoint from \(P'\). Hence \(Q'\) may be replaced by \(P'\), shortening every route by \(d-\ell(P')\in[1,j+a-3]\). Since \(Q\) is free, no switch is changed.
\end{proof}

The next lemma shows that, given many short vertex-disjoint paths from a protected switcher to a disjoint cycle, two can be chosen to incorporate almost all of the cycle while losing only a small fraction of the switcher's length, flexibility, and free length.
\begin{lemma}\label{lem:protected-splice}
Let $F$ be a protected switcher of length $\ell$
and flexibility $\Omega$, and suppose that every protected block has order
at most $b\ge1$ and flexibility at most $\omega$.  Let $C$ be a cycle of
length $c$, disjoint from $F$.  Suppose that $q\ge400$ pairwise
vertex-disjoint paths join $F$ to $C$, each of length at most $t$.  Then two
of these paths give a protected switcher $F'$ satisfying
\begin{align}
 \ell+c+2t\geq \ell(F')&\ge\left(1-\frac{1000}q\right)\ell
             +0.99c-4b,\label{eq:protected-join-low}\\
 \Omega(F')&\ge\left(1-\frac{1000}q\right)\Omega
             -4\omega.\label{eq:protected-join-flex}
\end{align}
If $F$ has a free family $\mathcal Q$ with $W(\mathcal Q)=W$ and
$p(\mathcal Q)=p$, the paths may be chosen so that $F'$ has a free family
$\mathcal Q'$ satisfying
\begin{align}
 W(\mathcal Q')&\ge\left(1-\frac{1000}q\right)W+0.99c-1,
 \label{eq:protected-join-free}\\
 p(\mathcal Q')&\le p+3.\label{eq:protected-join-p}
\end{align}
Every protected block of \(F'\) is an unchanged protected block of
\(F\), and hence has order at most \(b\) and flexibility at most
\(\omega\).
\end{lemma}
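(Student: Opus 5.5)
The plan is an averaging argument over the attachment points of the $q$ paths. Let $P_1,\dots,P_q$ be the paths, with endpoints $x_i\in V(F)$ and $y_i\in V(C)$. We may assume each $P_i$ meets $F$ only at $x_i$ and $C$ only at $y_i$: shortening a path to its first vertex on $C$ after its last vertex on $F$ keeps the paths disjoint and of length at most $t$.

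\textbf{Step 1: snapping endpoints to the longest route.} Orient the longest route $L$ of $F$; it has length $\ell$. For each $x_i$, define a point $\pi(x_i)$ on $L$.
\begin{itemize}
\item If $x_i$ lies on $L$ outside every protected block subgraph, set $\pi(x_i)=x_i$.
\item If $x_i$ lies in (or off $L$ inside) a switch of a protected block $B$, extend $P_i$ inside the block subgraph of $B$ to a pivotal vertex at the boundary of $\operatorname{supp}(B)$. This uses at most $b$ extra vertices.
\item If $x_i$ lies on a path $P_j$ not on the route, this cannot happen, since every $P_j$ is used by every route.
\end{itemize}
Because distinct blocks are vertex-disjoint, a single block $B$ may be used by several paths. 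To prevent the extensions from colliding, we discard, for each block, all but one of the paths entering it. We choose the final pair of paths so that each of the two touches at most one block.

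\textbf{Step 2: choosing the pair.} Similarly, the endpoints $y_i$ split $C$ into $q$ arcs of total length $c$. Cyclically order the paths by the position of $\pi(x_i)$ along $L$. The points split $L$ into $q$ arcs of total length $\ell$. For consecutive indices $i,i+1$ in this order, consider the new cycle
\[
L' = \bigl(L \text{ minus the arc } [\pi(x_i),\pi(x_{i+1})]\bigr) \cup P_i \cup (\text{the longer } y_i\text{--}y_{i+1} \text{ arc of } C) \cup P_{i+1}.
\]
The difficulty is that the $C$-order of the $y_i$ need not match the $L$-order. A pigeonhole (Erd\H{o}s--Szekeres-type) argument avoids this. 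Since $C$ splits into $q$ arcs, all but about $q/100$ pairs $\{i,j\}$ have the property that the longer $y_i$--$y_j$ arc of $C$ has length at least $0.99c$. More carefully, choose $i,j$ with $\operatorname{dist}_C(y_i,y_j)\le c/100$. For the $F$-side we need $\pi(x_i),\pi(x_j)$ to bound an arc of $L$ of length at most $1000\ell/q$, containing at most $1000\Omega/q$ flexibility and at most $1000W/q$ free length. We also need that arc to meet at most a few blocks fully.

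I would count as follows. Call an $L$-arc between consecutive points \emph{bad} if it carries more than $300/q$ of the length, flexibility, or free-length mass. Then at most $q/100$ arcs are bad. Among $q$ points on $C$, some $100$ consecutive ones, or a large set, are pairwise close. Combining these two counts with a pigeonhole gives indices $i,j$ that are adjacent along $L$ with a good arc and close on $C$. This is the main obstacle: we must simultaneously get adjacency along $L$ and proximity along $C$. If pure adjacency fails, the fallback is to allow $\pi(x_i),\pi(x_j)$ to be within $O(1)$ positions in the $L$-order, with $q\ge400$ giving room to spare.

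\textbf{Step 3: verification.} Form $F'$ by deleting the arc between $\pi(x_i)$ and $\pi(x_j)$, together with the switches and blocks inside it. Any block partially cut is also discarded; there are at most two endpoint blocks, plus two blocks from the snapping, which accounts for the $-4b$ and $-4\omega$ terms. Then add $P_i$, the long arc of $C$, and $P_j$. Remaining switches and protected blocks are untouched, so they stay complete with edge-disjoint supports, and $F'$ is a protected switcher. Lengths then follow directly:
\begin{itemize}
\item $\ell(F')\le \ell+c+2t$;
\item the lower bounds \eqref{eq:protected-join-low} and \eqref{eq:protected-join-flex} hold from the good-arc choice;
\item for the free family, we keep the members of $\mathcal Q$ not hit, cut the at most two members hit at the ends into at most two pieces each, and add the long $C$-arc minus its endpoints as one new free path. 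It is free because it avoids all blocks and lies in every route, which yields \eqref{eq:protected-join-free} and $p(\mathcal Q')\le p+3$.
\end{itemize}
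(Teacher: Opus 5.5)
\textbf{The gap is in Step 2, at the point you yourself flag as the main obstacle.} Your bad-arc count concerns the $q$ gaps of $L$ cut out by \emph{all} the points $\pi(x_i)$. You then assert that some $L$-consecutive pair with a good gap is also close on $C$. This is false in general, because the $C$-order can be an arbitrary permutation of the $L$-order. For example, take $q$ odd and place $y_i$ at position $\frac{c}{q}\bigl(i\cdot\frac{q-1}{2}\bmod q\bigr)$. Then every $L$-consecutive pair is at $C$-distance about $c/2$, so no such pair gives an arc of $C$ of length $0.99c$.

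Your fallback, allowing $\pi(x_i),\pi(x_j)$ to be $O(1)$ positions apart, is not carried out, and it does not combine with your count. A stretch of up to $100$ consecutive gaps, each good at threshold $300/q$, can still carry $100\cdot 300/q$ of the length or flexibility, far above $1000/q$.

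The missing idea is to pigeonhole on $C$ \emph{first}:
\begin{enumerate}[label=\textup{(\arabic*)}]
\item Split $C$ into $100$ arcs of length at most $c/100+1$.
\item Pass to a subfamily $\mathcal I$ of $m\ge q/200$ paths whose $C$-endpoints lie in one arc $I_C$. Any two indices in $\mathcal I$ are then automatically close on $C$.
\item Take adjacency along the longest route only \emph{within} $\mathcal I$. The $m$ endpoints of $\mathcal I$ cut the route into $m$ gaps, and by Markov some gap simultaneously carries at most $4/m\le 800/q$ of the length, of the flexibility of blocks supported in it, and of the free length.
\end{enumerate}
This is what the paper does. A windowing variant of your fallback also works. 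Cut the $L$-cyclic order into $\lfloor q/101\rfloor$ disjoint windows of $101$ consecutive points, find two points of each window in a common $C$-arc, and average over whole windows rather than over single gaps.

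\textbf{Step 1 also needs repair.} Discarding, for each block, all but one of the paths entering it can destroy the supply of paths, since all $q$ paths might end in the same block. Collisions must be handled only for the final pair. Moreover, $\pi$ cannot be fixed before the pair is chosen. The start of the deleted arc must be snapped backwards and its end forwards; otherwise the extension inside the block runs through vertices of the part of the route you keep.

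The paper avoids both issues with a labelling step. It labels each $x_i$ according to whether it lies internally on a long side, internally on a short side, or elsewhere, and keeps at least $q/2$ paths with a common label. Then any two of them lie on a route $R_{ij}$ that uses the long side in every other switch. The deleted $x_i$--$x_j$ subpath of $R_{ij}$ differs from the chosen gap only inside at most two blocks, and at most four blocks are discarded beyond those supported in the gap. This is what produces the $-4b$ and $-4\omega$ terms.
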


\begin{proof}
Shorten each joining path \(P_i\) to the subpath from its last vertex in \(F\) to its first vertex in \(C\), and write \(x_i\in V(F)\) and \(y_i\in V(C)\) for its endvertices. Label internal vertices on long and short switch sides by \(+\) and \(-\), respectively; label vertices on connecting paths and pivotal vertices arbitrarily. By the pigeonhole principle, there is a set \(I_0\subseteq[q]\) with
\(|I_0|\ge q/2\) such that the vertices \(x_i\), \(i\in I_0\), have the same label.

Let \(R\) be the longest route of \(F\). If \(x_i\) lies internally on a short side, let \(x_i^*\) be the vertex on the corresponding long side at the same distance from the first pivotal vertex; otherwise set \(x_i^*=x_i\). Partition \(C\) into $100$ consecutive arcs of length at most \(c/100+1\), assigning each common endvertex to one of the two arcs. Some arc \(I_C\) contains at least \(q/200\) vertices \(y_i\) with \(i\in I_0\). Let \(\mathcal I\) be the corresponding set of indices and set \(m:=|\mathcal I|\). Then \(m\ge q/200\ge2\).

Let \(I_1,\ldots,I_m\) be the subpaths of \(R\) between consecutive
vertices \(x_i^*\), \(i\in\mathcal I\), in cyclic order. For
\(s\in[m]\), set $w_s:=\left|E(I_s)\cap\bigcup_{Q\in\mathcal Q}E(Q)\right|$ and $\omega_s:=\sum_{\operatorname{supp}(B)\subseteq I_s}\Omega(B)$,
where the second sum is over the protected blocks of \(F\). Since
the edge sets of \(I_1,\ldots,I_m\) partition \(E(R)\), we have
\(\sum_s\ell(I_s)=\ell\), \(\sum_sw_s=W\), and
\(\sum_s\omega_s\le\Omega\). Hence one of these paths, denoted by
\(I_F\), has endvertices \(x_i^*,x_j^*\) for distinct
\(i,j\in\mathcal I\) and satisfies $\ell(I_F)\le\frac{4\ell}{m}$, $w(I_F)\le\frac{4W}{m}$ and $\omega(I_F)\le\frac{4\Omega}{m}$.
Since \(m\ge q/200\),
\begin{equation}\label{w}
\ell(I_F)\le\frac{1000\ell}{q},\qquad
w(I_F)\le\frac{1000W}{q},\qquad
\omega(I_F)\le\frac{1000\Omega}{q}.
\end{equation}

Since \(x_i\) and \(x_j\) have the same label, there is a route
\(R_{ij}\) containing both vertices and using the long side in every
other switch. Let \(D_F\) be the \(x_i\)--\(x_j\) subpath of
\(R_{ij}\) corresponding to \(I_F\). Then
\(\ell(D_F)\le\ell(I_F)+2b\) and
\(\ell(R_{ij})\ge\ell-2b\). Let \(D_C\) be the \(y_i\)--\(y_j\)
subpath of \(I_C\); then \(\ell(D_C)\le c/100+1\).

Let \(\overline D_F\) and \(\overline D_C\) be the complementary
\(x_i\)--\(x_j\) and \(y_i\)--\(y_j\) paths in \(R_{ij}\) and \(C\),
respectively. Then
\[
R':=P_i\cup\overline D_C\cup P_j\cup\overline D_F
\]
is a cycle. For each protected block \(B\) whose support is disjoint
from \(D_F\) and which contains neither \(x_i\) nor \(x_j\), add to
\(R'\) the unused switch sides in \(B\) and keep \(B\) as a protected
block. This gives a protected switcher \(F'\) with longest route
\(R'\).

Every protected block not kept in \(F'\) either has its support
contained in \(I_F\), has an endvertex of \(I_F\) in its support, or
contains \(x_i\) or \(x_j\). The first class has total flexibility at
most \(\omega(I_F)\), while the other two classes contain at most
four blocks. Hence
$
\Omega(F')
\ge\Omega-\omega(I_F)-4\omega
\ge\left(1-\frac{1000}{q}\right)\Omega-4\omega.
$

Since each joining path has at least one edge,
\[
\begin{aligned}
\ell(F')
&\ge\ell(R_{ij})-\ell(D_F)+c-\ell(D_C)+2\\
&\ge\ell-\frac{1000\ell}{q}-4b
+c-\frac{c}{100}+1\\
&>\left(1-\frac{1000}{q}\right)\ell+0.99c-4b.
\end{aligned}
\]
Also, \(\ell(F')\le\ell+c+2t\).

Let \(\mathcal Q_0\) be the collection of maximal nontrivial subpaths of members of \(\mathcal Q\) that are contained in \(\overline D_F\). Since the change from \(R\)
to \(R_{ij}\) occurs only inside protected blocks, which are disjoint
from the free paths,
$
\sum_{Q'\in\mathcal Q_0}\ell(Q')\ge W-w(I_F).
$
Let \(Q_C\) be the path formed by \(P_i\), \(\overline D_C\), and
\(P_j\), with its first and last edges removed. Then \(Q_C\) is free
and \(\ell(Q_C)\ge0.99c-1\). Thus, for
\(\mathcal Q':=\mathcal Q_0\cup\{Q_C\}\), equation \eqref{w} gives
$
W(\mathcal Q')
\ge W-w(I_F)+0.99c-1
\ge\left(1-\frac{1000}{q}\right)W+0.99c-1.
$
Removing \(D_F\) creates at most two additional free subpaths, so
\(p(\mathcal Q')\le p+3\). Every protected block of \(F'\) is an
unchanged protected block of \(F\).
\end{proof}

\begin{figure}[t]
\centering
\begin{tikzpicture}[
  x=1cm,
  y=.9cm,
  every node/.style={font=\scriptsize},
  vertex/.style={circle,fill=black,inner sep=1.35pt},
  edge/.style={draw=black,line width=.45pt},
  kept/.style={draw=blue!65!black,line width=.95pt},
  free/.style={draw=green!45!black,line width=1.05pt},
  omitted/.style={draw=red!75!black,dashed,line width=1pt},
  candidate/.style={draw=black!25,line width=.45pt},
  survivebox/.style={
    draw=blue!45!black,
    fill=blue!5,
    rounded corners=3pt,
    line width=.45pt
  },
  discardbox/.style={
    draw=red!55!black,
    fill=red!4,
    dashed,
    rounded corners=3pt,
    line width=.45pt
  }
]

\foreach \i/\x in {1/0,2/1.85,3/3.70,4/5.55,5/7.40,6/9.25}{
  \coordinate (v\i) at (\x,4.25);
  \coordinate (w\i) at ({\x+1.15},4.25);
}

\begin{scope}[on background layer]
  \draw[survivebox] (-.25,3.52) rectangle (3.25,5.02);
  \draw[discardbox] (3.45,3.52) rectangle (6.95,5.02);
  \draw[survivebox] (7.15,3.52) rectangle (10.65,5.02);
\end{scope}

\node[font=\small] at (5.20,6.05) {\(F\)};
\node[text=blue!55!black] at (1.50,4.86) {\(B_1\)};
\node[text=red!65!black] at (5.20,4.86) {\(B_2\)};
\node[text=blue!55!black] at (8.90,4.86) {\(B_3\)};

\foreach \i in {1,2,5,6}{
  \draw[kept] (v\i) .. controls +(0,.56) and +(0,.56) .. (w\i);
  \draw[edge] (v\i) .. controls +(0,-.50) and +(0,-.50) .. (w\i);
}

\foreach \i in {3,4}{
  \draw[omitted] (v\i) .. controls +(0,.56) and +(0,.56) .. (w\i);
  \draw[edge,dashed] (v\i) .. controls +(0,-.50) and +(0,-.50) .. (w\i);
}

\draw[kept] (w1)--(v2);
\draw[edge] (w2)--(v3);
\draw[omitted] (w3)--(v4);
\draw[edge] (w4)--(v5);
\draw[kept] (w5)--(v6);

\draw[kept]
  (w6) .. controls +(0,.55) and +(.65,0) .. (10.95,5.55)
       -- (-.55,5.55)
       .. controls +(-.65,0) and +(0,.55) .. (v1);

\node[
  text=blue!65!black,
  fill=white,
  inner sep=1pt
] at (5.20,5.6) {\(\overline D_F\)};

\coordinate (xi) at (v3);
\coordinate (xj) at (w4);
\node[vertex] (yi) at (4.02,1.88) {};
\node[vertex] (yj) at (6.38,1.88) {};

\begin{scope}[on background layer]
  \draw[candidate]
    (2.70,4.25) .. controls (3.25,3.25) and (4.18,2.65) .. (4.42,2.05);
  \draw[candidate]
    (4.55,4.25) .. controls (4.70,3.22) and (4.78,2.62) .. (4.88,2.14);
  \draw[candidate]
    (5.85,4.25) .. controls (5.72,3.22) and (5.64,2.62) .. (5.56,2.14);
  \draw[candidate]
    (7.55,4.25) .. controls (7.10,3.25) and (6.76,2.65) .. (6.60,2.05);
\end{scope}

\draw[free]
  (xi) .. controls (3.80,3.12) and (3.95,2.55) .. (yi)
  node[pos=.47,left,text=black] {\(P_i\)};

\draw[free]
  (xj) .. controls (6.72,3.12) and (6.50,2.55) .. (yj)
  node[pos=.47,right,text=black] {\(P_j\)};

\draw[omitted]
  (yi) .. controls (4.55,2.55) and (5.85,2.55) .. (yj);

\draw[free]
  (yj) .. controls (8.45,1.55) and (8.80,-.35) .. (5.20,-.58)
       .. controls (1.60,-.35) and (1.95,1.55) .. (yi);

\node[font=\small] at (5.20,.45) {cycle \(C\)};

\node[
  text=green!45!black,
  fill=white,
  inner sep=1pt
] at (5.20,-.48) {\(\overline D_C\)};

\node[
  text=red!75!black,
  fill=white,
  inner sep=1pt
] at (5.20,3.78) {\(D_F\)};

\node[
  text=red!75!black,
  fill=white,
  inner sep=1pt
] at (5.20,2.54) {\(D_C\)};

\draw[
  black!45,
  decorate,
  decoration={brace,amplitude=3.5pt}
]
  (3.72,2.78)--(6.68,2.78)
  node[midway,yshift=9pt,text=black!55] {\(I_C\)};

\node[below left=1pt] at (xi) {\(x_i\)};
\node[below right=1pt] at (xj) {\(x_j\)};
\node[below left=1pt] at (yi) {\(y_i\)};
\node[below right=1pt] at (yj) {\(y_j\)};

\foreach \p in {v1,w1,v2,w2,v3,w3,v4,w4,v5,w5,v6,w6}{
  \node[vertex] at (\p) {};
}

\end{tikzpicture}
\caption{The paths \(P_i\) and \(P_j\) are selected from the family of
vertex-disjoint joining paths. The subpaths \(D_F\) and \(D_C\) are
deleted, while
\(\overline D_F\cup P_i\cup\overline D_C\cup P_j\)
forms the longest route \(R'\) of \(F'\). The protected blocks
\(B_1\) and \(B_3\) are kept, whereas \(B_2\) is discarded.}
\label{fig:protected-joining}
\end{figure}

\section{Constructing protected switchers}
We now use the expansion results from the preceding section to construct switchers whose length and flexibility remain large under successive joining steps.  

Throughout this section, let $H=K_{m_1,\ldots,m_k}$, where
$m_1\le\cdots\le m_k$, and set $h:=|H|$ and $m:=h/k$. We assume that
$k\ge k_0$ and $m\le k^{22}$, where \(k_0\) is a sufficiently large
absolute constant. In particular, \(k\le h\le k^{23}\). The case when
\(k<k_0\) follows from Theorem~\ref{thm:HHKL}, while the case when
\(m>k^{22}\) follows from Corollary~\ref{cor:PS-large}.

We first show how the expansion properties can be used to construct switchers.
\begin{lemma}\label{lem:flexible-switcher}
There is an absolute constant \(C_0>0\) such that, for every integer
\(M\ge1\), there exists \(k_0=k_0(M)\) for which the following holds
whenever \(k\ge k_0\). Let \(L\) be a complete \(a\)-partite graph with \(2\le a\le k\) and \(|L|\le h\). Suppose that \(\ol G\) is \(L\)-free, \(|G|\ge(a-1)n/10\), and \(n\ge C_0h\). Then \(G\) contains a protected switcher \(F^*\) such that \(|F^*|\le C_0h\) and \(\Omega(F^*)\ge40h\). Moreover, \(F^*\) has exactly \(M\) protected blocks. If \(b_0\) and \(\omega_0\) denote the maximum order and maximum flexibility of a protected block, respectively, then \(b_0,\omega_0\le C_0h/M\).
\end{lemma}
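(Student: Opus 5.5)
We first pass to an expander. Choose \(d\) comparable to \(h/a\): pad the parts of \(L\) (or use a supergraph \(\mathcal J\) with \(q=a\) parts of sizes in \([d,2d]\)) so that \(L\subseteq J:=\mathcal J\). Then \(\ol G\) is still \(J\)-free. Take \(\beta\) to be a large constant multiple of \(h\), so that \(\beta\ge qd\) and \(p_{a,q}(d,2\beta)=\Theta(ha)\le|G|\); the last inequality holds because \(n\ge C_0h\). Lemma~\ref{lem:expanding-subgraph} then gives \(2\le r'\le a\) and an induced \(F_0\) with \(|F_0|\le p_{r',q}(d,2\beta)\) that multi-\((d,r',q,\beta,20)\)-expands. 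We have \(\beta=\Theta(h)\) while \(E(r',q,\beta)=O(\log h+\log k\log\log k)=O(\log k)\), using \(h\le k^{23}\). Hence \(4E\le R=\lfloor\beta/1000\rfloor\) for \(k\ge k_0\). Note that \(|F_0|\) could be as large as \(\Theta(ha)\), but we will only use \(O(h)\) vertices of it.

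Next we build the switches. We want \(\Theta(h)\) total weight using \(O(h)\) vertices, so switches of weight one are not enough: \(\Theta(h)\) odd cycles, each joined by paths of length \(\Theta(\log k)\), would cost \(\Theta(h\log k)\) vertices. Instead we produce switches of large weight using Lemma~\ref{lem:coarse-path}.

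For each switch, first take an edge \(uv\) (or a short odd cycle from Lemma~\ref{lem:compact-odd}) in the current expander avoiding the used set \(Z\). Then build two internally disjoint \(u\)--\(v\) paths whose lengths differ by a prescribed amount \(w\). To do this, grow rooted sets at \(u,v\) via Lemma~\ref{lem:multi-root} and apply Lemma~\ref{lem:coarse-path} twice: once with \(T=0\) (a short path) and once with \(T=w\) (a path of length in \([w,w+4E]\)). The weight is then known to within \(O(E)\).

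To make each block \emph{complete}, attach to each heavy switch a set of \(O(\log w)\) or \(O(E)\) weight-one odd-cycle switches from Lemma~\ref{lem:compact-odd}, or switches of weights \(1,2,4,\dots\) built in the same way. Any weight sequence \(\delta_1\le\delta_2\le\cdots\) with \(\delta_{i+1}\le1+\sum_{j\le i}\delta_j\) is complete. So we take each block of \(M\) to contain one power-of-two ladder up to about \(E\), followed by switches of weight roughly \(\Theta(E)\) or larger that fill up \(\Theta(h/M)\) flexibility. The cost of a switch of weight \(w\) is \(O(w+E)\) vertices, so the total order is \(O(h+ME\log E)=O(h)\) for \(k\ge k_0(M)\).

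Switches are chained by Lemma~\ref{lem:short-path} and rooted sets. At every step the used set \(Z\) has size \(O(h)\le5\beta\). We need to keep \(|Z\cap W|\le 12R\), which forces us to refresh the expander: whenever \(Z\) grows, reapply Lemma~\ref{lem:expanding-subgraph} in \(F_0-Z\) (as in the proof of Lemma~\ref{lem:coarse-path}) to obtain fresh vertices. Closing the cyclic order with one final path gives the switcher. Its blocks are consecutive with edge-disjoint supports, and each block has order and flexibility \(O(h/M)\). Summing gives \(\Omega\ge40h\) once \(\beta\) is a large multiple of \(h\).

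The main obstacle is the bookkeeping of \(Z\cap W\) in the conditions of Lemma~\ref{lem:short-path} and Lemma~\ref{lem:coarse-path}. Those conditions are stated relative to a single rooted set of size \(R=\Theta(h)\), while the total used set is also \(\Theta(h)\). The constants must be chosen so that \(|F^*|\le C_0h\) while \(|Z|\le 5\beta\) and \(|Z\cap W|\le 12R\). If this fails, my fallback is to build each block inside a fresh expander obtained by Lemma~\ref{lem:expanding-subgraph} applied to \(G\) minus everything used so far. This is possible since \(|G|\ge(a-1)n/10\gg h\). The blocks are then linked through the final expander, where only \(M\) connecting paths of length \(O(E)\) are needed.
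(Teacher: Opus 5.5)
Your overall plan matches the paper's. You normalize \(L\), extract a multi-expander \(F_0\) with \(\beta\) a large multiple of \(h\), and open each of the \(M\) blocks with weight-one odd-cycle switches. You then add coarse switches whose weight never exceeds \(1\) plus the current total, so that completeness is preserved. All switches are chained through rooted sets inside \(F_0\), the expander is refreshed on the unused part of \(F_0\), and the chain is closed up. The paper grows the heavy weights geometrically (\(3S/8\le\delta\le S/2+1\)), whereas you use weights \(\Theta(E)\) or more. That difference is harmless, since a switch of weight \(w\gtrsim E\) together with its joining path costs \(O(w)\) vertices either way.

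There is, however, a genuine gap: the step that builds a single heavy switch fails as written. Lemma~\ref{lem:coarse-path} requires \(Z\cap(A\cup B)=\varnothing\), and the path it returns runs through the rooted sets \(A,B\) at \(u\) and \(v\). So you cannot apply it a second time between the same rooted sets while forcing the new path to be internally disjoint from the first. Moreover, once the switch exists, nothing is left to attach the pivots \(u,v\) to the chain. A second rooted set at \(u\) cannot be disjoint from the first, and \(u\) may have lost its expansion. Using the edge \(uv\) itself as the short side does not help, since the long side and both joining paths would still compete for rooted sets at \(u\) and \(v\).

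The missing device is to separate the roots used for the long side from the roots used for chaining:
\begin{enumerate}[label=(\arabic*)]
\item take a path \(avwb\) in the fresh expander;
\item use Lemma~\ref{lem:multi-root} with \(t=4\) to get disjoint rooted sets \(A_a,A_v,A_w,A_b\);
\item apply Lemma~\ref{lem:coarse-path} from \(A_a\) to \(A_b\), with \(A_v\cup A_w\) added to \(Z\);
\item take the switch to be the cycle \(vaPbwv\) with pivots \(v,w\).
\end{enumerate}
Its short side is the edge \(vw\) and its weight is \(\ell(P)+1\). The sets \(A_v\) and \(A_w\) then remain available for the joining paths.

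Some smaller corrections:
\begin{enumerate}[label=(\alph*)]
\item The power-of-two ladder cannot be built ``in the same way'', because Lemma~\ref{lem:coarse-path} fixes lengths only up to \(4E\). The ladder must consist of weight-one odd cycles.
\item The ladder must reach a total weight that is a constant multiple of \(E\), exceeding the \(4E\) error, before any coarse switch is added; ``about \(E\)'' is not enough. The paper uses \(8\mu\) with \(\mu=10E(r_0,q,2\beta)\). This costs \(O(E^2)\) vertices per block rather than \(O(E\log E)\). That is still \(o(h/M)\), because \(E=O(\log k\log\log k)\) (not \(O(\log k)\)) and \(h\ge k\).
\item Padding \(L\) to \(q=a\) parts of size in \([d,2d]\) is impossible when \(L\) is unbalanced. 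You must split large parts, which gives \(a\le q\le 2a\) and \(qd\le 4h\).
\item The \(Z\cap W\) bookkeeping you flag is not a real obstacle. First fix an absolute \(D\) such that all switches and joining paths together use at most \(Dh\) vertices; this bound does not depend on \(M\). Then set \(\beta=1000Dh\) and \(R=Dh\). Every forbidden set is \(Dh\) plus a few rooted sets, which is within the \(5\beta\), \(12R\) and \(14R\) thresholds of Lemmas~\ref{lem:short-path} and~\ref{lem:coarse-path}.
\item Drop the fallback of building blocks in separate expanders of \(G\) minus the used vertices. Those expanders may lie in different components of \(G\), and then they cannot be linked. The paper makes every connection inside \(F_0\).
\end{enumerate}
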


\begin{proof}
We first replace \(L\) by a complete multipartite graph whose parts have suitable orders. Set \(d:=\lceil |L|/a\rceil\).  Enlarge each part of order less than \(d\), and divide every resulting part of order \(x\ge d\) as evenly as possible into \(\lceil x/(2d)\rceil\) parts. Adding all edges between distinct new parts gives a complete \(q\)-partite graph \(\mathcal J\supseteq L\). For convenience, we call this operation a \textit{normalization}.
%{\color{red} \textit{superlarge}.}
Every part of \(\mathcal J\) has order in \([d,2d]\), while \(a\le q\le2a\), \(qd\le4h\), and \(|\mathcal J|\le3|L|\). In particular, \(\ol G\) is \(\mathcal J\)-free.

Choose an absolute constant \(D\) sufficiently large. Set \(\beta:=1000Dh\) and \(R:=\lfloor\beta/1000\rfloor\). Thus
\(\beta\ge 1000Dh>qd\).
Since
\(p_{q,q}(d,2\beta)\le8K\beta q\le16000KDha\le32000KD(a-1)h\),
we have \(|G|\ge p_{q,q}(d,2\beta)\).
By Lemma~\ref{lem:expanding-subgraph}, there exists an induced subgraph \(F_0\subseteq G\) and a
part-subgraph \(J_0\subseteq\mathcal J\) with \(r_0\ge2\) parts such that 
\begin{itemize}
    \item[(a)]  \(\ol{F_0}\) is \(J_0\)-free,
    \item[(b)]  $p_{r_0,q}(d,2\beta)-2d \le |F_0| \le p_{r_0,q}(d,2\beta)$,
    \item[(c)]  \(F_0\) multi-\((d,r_0,q,2\beta,20)\)-expands.
\end{itemize}
The graph \(F_0\) is not bipartite. Indeed, otherwise one bipartition class
\(X\) has order at least \(|F_0|/2\). Since
\(p_{r_0,q}(d,2\beta)-2d
 =\left\lceil2K\beta\left(2r_0-3+
 \frac{(r_0-1)\log r_0}{\log(2q)}\right)\right\rceil
 \ge2K\beta(2r_0-3)
 \ge K\beta r_0\), we have
\(|X|\ge K\beta r_0/2>2dr_0\ge|J_0|\). The set \(X\) is a clique in
\(\ol{F_0}\), and hence contains \(J_0\), a contradiction.
By Lemma~\ref{lem:compact-odd}, we obtain an odd cycle \(C_1\) such that \(|C_1|\le2E(r_0,q,2\beta)+1\), and \(F_0\)
multi-\((d,r_0,q,2\beta,16)\)-expands into \(W:=V(F_0)\setminus V(C_1)\). Thus, \(F_0\) multi-\((d,r_0,q,\beta,16)\)-expands into \(W\). Indeed, the small-set condition is unchanged below \(\beta\), while for \(\beta\le|S|\le2\beta\) we have
$
 |N_{F_0}(S)|
 \ge16|S|
 \ge\phi_{r_0,q,\beta}(|S|)|S|+10\beta,
$
since \(\phi_{r_0,q,\beta}\le3\eta<1\). For \(|S|\ge2\beta\), since
\(p_{t,q}(d,2\beta)\ge p_{t,q}(d,\beta)\), we have
\(\phi_{r_0,q,2\beta}(|S|)\ge\phi_{r_0,q,\beta}(|S|)\). Thus 
\begin{itemize}
    \item[(d)]  \(F_0\) multi-\((d,r_0,q,\beta,16)\)-expands into \(W\).
\end{itemize}

Set \(\mu:=10E(r_0,q,2\beta)\) and write
\(E:=E(r_0,q,\beta)\). We construct an open chain whose switches are
divided into \(M\) consecutive complete blocks. Every short odd cycle used below has order at most \(\mu\), and every path joining consecutive switches has length at most \(\mu\).
Since \(q\le2k\), \(\beta=1000Dh\), and \(k\leq h\le k^{23}\), we have
\(\mu=O(\log(2k)\log\log(4k))=o(h)\). 

Choose almost-antipodal vertices \(v_1,w_1\) on \(C_1\). The two
\(v_1\)--\(w_1\) paths on \(C_1\) differ in length by one, so \(\delta(C_1)=1\). Moreover, \(|C_1|\le\mu\). 
Since \(R\le\beta/2\), Lemma~\ref{lem:multi-root} gives disjoint
\((v_1,R)\)-rooted set $A_1$ and \((w_1,R)\)-rooted set \(B_1\subseteq
W\cup\{v_1,w_1\}\). We keep \(A_1\) as the initial rooted set and use \(B_1\) as the active end set.
We extend this chain by adding switches \(C_2,C_3,\ldots\) and paths
joining \(w_i\) to \(v_{i+1}\). There are two ways to extend.
We first add weight-one switches until the total weight is at least \(8\mu\). Thereafter, we add switches whose weights are a fixed positive proportion of the current total weight. This preserves completeness and makes the flexibility grow geometrically. We stop when the flexibility first reaches
\(\nu:=\lceil40h/M\rceil\). 

We perform the following construction while the switches and joining
paths chosen so far have total order at most \(Dh\). We shall verify
below that this condition holds throughout the construction of all
\(M\) protected blocks.

\smallskip
\noindent
\textbf{Step 1. Unit extension.}

Suppose that \(C_i\) is the last switch constructed, with second pivotal vertex \(w_i\), and let \(B_i\) be the active
\((w_i,R)\)-rooted set. Let \(H_i\) be the subgraph of \(F_0\) induced by the vertices not used by the switches and joining paths already chosen, or by the initial and active rooted sets. The order estimate proved below gives \(|H_i|\ge p_{r_0,q}(d,\beta)\), while \(\ol{H_i}\) does not contain \(J_0\). Lemma~\ref{lem:expanding-subgraph}
gives an induced subgraph \(F_i\subseteq H_i\) and a part-subgraph
\(J_i\subseteq J_0\) with \(r_i\ge2\) parts such that
\(\ol{F_i}\) does not contain \(J_i\),
\(p_{r_i,q}(d,\beta)-2d\le|F_i|\le p_{r_i,q}(d,\beta)\), and \(F_i\)
multi-\((d,r_i,q,\beta,20)\)-expands.

Note that \(F_i\) is non-bipartite by the similar bipartition argument for \(F_0\). Since
\(|F_i|\le p_{r_i,q}(d,\beta)\le p_{r_i,q}(d,2\beta)\),
Lemma~\ref{lem:compact-odd} gives an odd cycle \(C_{i+1}\) such that
$
|C_{i+1}|\le2E(r_i,q,\beta)+1\le\mu
$
and \(F_i\) multi-\((d,r_i,q,\beta,16)\)-expands into
\(W_i:=V(F_i)\setminus V(C_{i+1})\). Choose almost-antipodal vertices \(v_{i+1},w_{i+1}\in V(C_{i+1})\). Then \(C_{i+1}\) is a weight-one switch. Since \(R\le\beta/2\), Lemma~\ref{lem:multi-root}, applied in \(F_i\) with target set \(W_i\), gives disjoint
\((v_{i+1},R)\)-rooted set $A_{i+1}$ and \((w_{i+1},R)\)-rooted set
\(B_{i+1}\).

Apply Lemma~\ref{lem:short-path} in \(F_0\) from \(B_i\) to
\(A_{i+1}\), avoiding all previously chosen switches and joining paths
except for \(w_i\), as well as the initial rooted set,
\(V(C_{i+1})\setminus\{v_{i+1}\}\), and \(B_{i+1}\). The previously
chosen switches and joining paths use at most \(Dh\) vertices, and
\(\mu\le R\). Hence the forbidden set has order and
\(W\)-intersection at most
$
Dh+2R+\mu\le Dh+3R\le10R<5\beta,
$
and \(10R<14R\). Thus Lemma~\ref{lem:short-path} applies.

Orient the resulting path from \(B_i\) to \(A_{i+1}\), take the
subpath from its last vertex in \(B_i\) to its first vertex in
\(A_{i+1}\), and extend its ends inside these rooted sets to \(w_i\)
and \(v_{i+1}\). This gives a \(w_i\)--\(v_{i+1}\) path of length at
most \(E+2\lceil\log_8R\rceil\le\mu\). This path joins
\(C_{i+1}\) to the preceding switches, and \(B_{i+1}\) becomes the
active rooted set.

\smallskip
Once the total weight \(S\) of the current block reaches \(8\mu\), we stop using unit extensions. Instead, we seek a new switch of weight \(\delta\) satisfying
$
  \frac{3S}{8}\le \delta\le \frac S2+1.
$
Since the current block is complete and \(\delta\le S+1\), adding such a switch preserves completeness. Moreover, the total weight increases from \(S\) to at least \(11S/8\). We call this operation a \emph{geometric extension}, and repeat it until the total weight first reaches \(\nu\).

\smallskip
\noindent
\textbf{Step 2. Geometric extension.}

After the unit extensions, let \(S\) be the total weight of the
current block. Then \(S\ge8\mu\), and the subset sums of its switch weights contain every integer in \([0,S]\). Applying the same argument as in Step~1 to the unused vertices of \(F_0\), we obtain an induced subgraph \(F'\) and an integer \(r'\ge2\) such that \(F'\) multi-\((d,r',q,\beta,20)\)-expands. Choose distinct vertices \(a,v,w,b\in V(F')\) such that \(av,vw,wb\) are edges. Since \(R\le\beta/4\), Lemma~\ref{lem:multi-root} gives pairwise disjoint
rooted sets \(A_a,A_v,A_w,A_b\) of order \(R\) at \(a,v,w,b\),
respectively.

Set \(T:=\lfloor S/2\rfloor-4E\). Since \(S\ge8\mu\) and
\(E\le\mu/10\), we have \(T\ge18\mu/5>0\). Let \(Z\) consist of all vertices in the switches and joining paths chosen so far, together with the initial rooted set, the active rooted set, and \(A_v\cup A_w\). We have \(|Z|\le Dh+4R\) and
\(|Z\cap W|\le Dh+4R\). Since \(S<\nu\),
\(T+4E=\lfloor S/2\rfloor\le\nu/2\le R\), and hence
$
|Z|+T+4E+2R\le Dh+7R\le5\beta,
$
and
$
|Z\cap W|+T+4E\le Dh+5R\le12R.
$
Lemma~\ref{lem:coarse-path} gives an \(a\)--\(b\) path \(P\)
avoiding \(Z\) with \(T\le\ell(P)\le T+4E\).

The cycle \(vaPbw v\), with pivotal vertices \(v,w\), is a
switch of weight \(\delta=\ell(P)+1\).
Since \(4E\le S/8\),
\begin{equation}\label{eq:geometric-weight}
 \frac{3S}{8}\le\delta\le\frac S2+1.
\end{equation}
The subset sums after adding this switch contain the integer intervals
\([0,S]\) and \([\delta,S+\delta]\). Since \(\delta\le S+1\), their
union contains every integer in \([0,S+\delta]\). Thus the resulting protected block is complete, and its total weight
satisfies
\[
\frac{11S}{8}\le S+\delta\le\frac{3S}{2}+1.
\]

Let \(z\) be the second pivotal vertex of the last switch
previously chosen, and let \(B_i\) be the active
\((z,R)\)-rooted set. The new switch has order
\(\ell(P)+3\le\nu/2+3\). Apply Lemma~\ref{lem:short-path} in \(F_0\)
from \(B_i\) to \(A_v\), avoiding all previously chosen switches and
joining paths except for \(z\), the initial rooted set, the new switch except for \(v\), and \(A_w\). The forbidden set has order and \(W\)-intersection at most
$
Dh+2R+\frac{\nu}{2}+3\le Dh+3R\le10R.
$
Thus there is a \(z\)--\(v\) path of length at most \(\mu\), internally disjoint from all previously chosen vertices and from the new switch except at \(v\). The rooted set \(A_w\) becomes the active rooted set.
\smallskip

Partition the switches into consecutive protected blocks as follows.
The first block begins with \(C_1\), and each subsequent block begins with a weight-one switch obtained by a unit extension. Since \(M\) is fixed and \(\mu^2=o(h)\), we may increase \(k_0\) so that
\(8\mu<\nu\) and \(\mu^2\le h/M\).
Within each block, use unit extensions until its total weight reaches \(\lceil8\mu\rceil\), and then use geometric extensions until its weight first reaches \(\nu\). Completeness is preserved at every step. Before the final geometric extension the total weight is less than \(\nu\), so \eqref{eq:geometric-weight} shows that the final flexibility lies in \([\nu,3\nu/2+1]\).

The unit extensions add \(O(\mu)\) switches, and each such switch
together with its joining path has order \(O(\mu)\). Their total order
is therefore \(O(\mu^2)\). During the geometric extensions, the total
weight increases by a factor of at least \(11/8\), while the order of
each new switch is \(O(S)\), where \(S\) is the weight before that
extension. Hence the new switches have total order \(O(\nu)\).

There are \(O(\log(\nu/\mu))=O(\log h)\) geometric extensions. Each
corresponding joining path has length \(O(E)=O(\mu)\), so these paths
have total order \(O(\mu\log h)=O(\mu^2)\). Consequently, each
protected block has order
\begin{equation}\label{eq:block-apriori}
 O(\nu+\mu^2)=O(h/M),
\end{equation}
and flexibility \(O(h/M)\). All implicit constants are absolute and
independent of \(M\).

After completing one protected block, begin the next with a unit
extension. The supports of the resulting blocks are consecutive and
pairwise edge-disjoint. By \eqref{eq:block-apriori}, after \(M\)
blocks the switches and joining paths chosen have total order
$
O(M\nu+M\mu^2)=O(h),
$
since \(M\nu\le40h+M\le41h\) and \(M\mu^2\le h\). Choose \(D\)
so that this order is at most \(Dh\).

At any stage, the only other vertices in use lie in the initial and
active rooted sets. Hence fewer than \(Dh+2R\le10R<\beta\) vertices
have been removed from \(F_0\). Moreover,
$
p_{r_0,q}(d,2\beta)-2d-p_{r_0,q}(d,\beta)
\ge K\beta-2d-2
\ge100\beta.
$
Thus the subgraph induced by the unused vertices has order at least
\(p_{r_0,q}(d,\beta)\), and its complement does not contain \(J_0\).
Lemma~\ref{lem:expanding-subgraph} therefore gives, at each step, an
induced subgraph which multi-\((d,r',q,\beta,20)\)-expands for some
\(r'\ge2\). It is non-bipartite by the
same argument used for \(F_0\). The size estimates in Steps~1 and~2
now show that all applications of Lemmas~\ref{lem:short-path} and
\ref{lem:coarse-path} are valid.

After completing the \(M\)th protected block, apply Lemma~\ref{lem:short-path} from the active rooted set to the initial
rooted set, avoiding every previously chosen switch and joining path
outside the active and initial rooted sets. The forbidden set has order at most
\(Dh\le5\beta\) and \(W\)-intersection at most \(Dh\le7R<14R\). The subpath from its last vertex in the active rooted set to its first vertex in the initial rooted set, extended inside these two sets, joins the last switch to the first and gives a protected switcher \(F^*\).

Each protected block has flexibility at least \(\nu\), and hence
$
\Omega(F^*)\ge M\nu\ge40h.
$
The preceding order estimate gives \(|F^*|=O(h)\), while
\eqref{eq:block-apriori} and the flexibility bound give
\(b_0,\omega_0=O(h/M)\). Increasing \(C_0\) yields
$
|F^*|\le C_0h,
$
and
$
b_0,\omega_0\le\frac{C_0h}{M}.
$
The constant \(C_0\) is absolute and independent of \(M\).
\end{proof}
\begin{remark}
The unit-extension step in the proof of Lemma \ref{lem:flexible-switcher} is essential and cannot be omitted. Indeed, Lemma~\ref{lem:coarse-path} controls the length of the path used in a geometric extension only up to an additive error of \(4E\), so the weight of the resulting switch cannot be prescribed exactly. When the current total weight \(S\) is small, this error may prevent the new weight from being at most \(S+1\), which is necessary to preserve completeness. 
\end{remark}
The next lemma provides the long cycles used in the extension step.
\begin{lemma}\label{lem:long-cycle}
There is an absolute constant \(C_1>0\) such that the following holds.
Let \(L\) be a complete \(a\)-partite graph with \(2\le a\le k\) and
\(|L|\le h\). Suppose that \(\ol G\) is \(L\)-free,
\(|G|\ge(a-1)n/10\), and \(n\ge C_1h\).
Then \(G\) contains a cycle \(C\) with
\begin{equation}\label{eq:long-cycle-range}
\frac{n}{10^{10}}\le|C|\le\frac{2n}{10^{10}}.
\end{equation}
\end{lemma}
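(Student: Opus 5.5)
The plan is to reuse the machinery of the proof of Lemma~\ref{lem:flexible-switcher}, but with a much larger expansion parameter, so that the coarse-path lemma produces a path of length roughly $n/(1.5\cdot 10^{10})$ that closes into a cycle. Normalize $L$ into a complete $q$-partite graph $\mathcal J\supseteq L$ with parts of order in $[d,2d]$, where $a\le q\le 2a$ and $qd\le 4h$. Set $\beta:=\lfloor c^* n\rfloor$ for a suitable small absolute constant $c^*$, so that $\beta\ge qd$ once $C_1$ is large. Put $R:=\lfloor \beta/1000\rfloor$.

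The first step checks that $|G|\ge p_{q,q}(d,2\beta)$. Since $p_{q,q}(d,2\beta)\le 8K\beta q\le 16K\beta a\le 32K\beta(a-1)$, it suffices that $32K c^*\le 1/10$; take $c^*:=10^{-7}$. Lemma~\ref{lem:expanding-subgraph}, applied with $2\beta$, then gives $F_0$ and $J_0$ with $r_0\ge 2$ parts satisfying the analogues of (a)--(c) in the proof of Lemma~\ref{lem:flexible-switcher}. As there, we pass to multi-$(d,r_0,q,\beta,16)$-expansion, now into $W:=V(F_0)$; no odd cycle needs to be removed, since $20\ge 16$.

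The second step builds the cycle. Choose an edge $xy$ in $F_0$ and take disjoint $(x,R)$- and $(y,R)$-rooted sets $A,B$ by Lemma~\ref{lem:multi-root}. Apply Lemma~\ref{lem:coarse-path} with $Z=\varnothing$ and $T:=\lceil n/10^{10}\rceil$. This requires $T+4E+2R\le 5\beta$ and $T+4E\le 12R$. Here $E=E(r_0,q,\beta)=O(\log n+\log k\log\log k)=o(n)$, and $12R\approx 12\cdot 10^{-10}n$, while $T\approx 10^{-10}n$. Hence both conditions hold, as does $4E\le R$, once $C_1$ (and so $n$) is large. We get an $x$--$y$ path $P$ with $T\le \ell(P)\le T+4E$, and $P+xy$ is a cycle with $n/10^{10}\le |C|\le n/10^{10}+4E+2\le 2n/10^{10}$, as required.

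The main obstacle is the parameter bookkeeping, in two places. First, the hypotheses of Lemma~\ref{lem:coarse-path} must hold with $\beta$ proportional to $n$ rather than to $h$. Second, the $o(n)$ bound on $E$ must be uniform. This uses $\log(2q)\log\log(4r)=O(\log k\log\log k)$ and $k\le h\le n/C_1$, so $E=O(\log n\,\log\log n)$. I expect this to go through directly. If instead the constraint $|F_0|\le p_{r_0,q}(d,2\beta)$ in Lemma~\ref{lem:short-path} causes trouble, I would adjust $c^*$ and the scale ($\beta$ versus $2\beta$) exactly as in the proof of Lemma~\ref{lem:flexible-switcher}. One more point to check is that the case $a\ge 2$ with small $|L|$ poses no problem, since normalization only requires $|\mathcal J|\le 3h$.
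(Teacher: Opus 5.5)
Your proposal is correct and follows essentially the same route as the paper. Both arguments normalize $L$, apply Lemma~\ref{lem:expanding-subgraph} at scale $2\beta$ with $\beta\asymp 10^{-7}n$, pass to multi-$(d,r_0,q,\beta,16)$-expansion, and close a path of length about $n/10^{10}$ from Lemma~\ref{lem:coarse-path} into a cycle. The paper starts from a path $avwb$ with four rooted sets and excludes $A_v\cup A_w$, a leftover from the switch construction, whereas your single edge $xy$ with $Z=\varnothing$ is a leaner variant that works equally well, since $\ell(P)\ge T\ge 2$ guarantees that $P+xy$ is a cycle.
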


\begin{proof}
Set \(L_0:=n/10^{10}\), \(d:=\lceil |L|/a\rceil\) and \(\beta:=n/10^7\). Let \(\mathcal J\) be a complete \(q\)-partite graph with \(q\le2a\) and \(qd\le4h\) by normalizing $L$. By increasing \(C_1\), we
have \(\beta\ge qd\). By calculation, we have
\(p_{q,q}(d,2\beta)\le8K\beta q\le16Kan/10^7\le(a-1)n/10\).
Lemma~\ref{lem:expanding-subgraph} therefore gives
an induced subgraph \(F_0\subseteq G\) and a part-subgraph
\(J_0\subseteq\mathcal J\) with \(r_0\ge2\) parts such that
\(p_{r_0,q}(d,2\beta)-2d\le|F_0|\le p_{r_0,q}(d,2\beta)\),
\(\ol{F_0}\) is \(J_0\)-free, and \(F_0\) multi-\((d,r_0,q,2\beta,20)\)-expands and hence also multi-\((d,r_0,q,\beta,16)\)-expands.

Since \(r_0\le q\le2k\le2h\le2n/C_1\), we have
\(E(r_0,q,\beta)=O(\log n\log\log n)=o(n)\) and hence \(4E(r_0,q,\beta)\le\min\{\lfloor L_0\rfloor,L_0/2\}\).
Choose a path \(avwb\) on four distinct
vertices. Since \(4\lfloor L_0\rfloor\le\beta\), Lemma~\ref{lem:multi-root} gives pairwise disjoint rooted sets \(A_a,A_v,A_w,A_b\) of order \(\lfloor L_0\rfloor\) at \(a,v,w,b\), respectively.

Apply Lemma~\ref{lem:coarse-path} from \(A_a\) to \(A_b\), excluding \(Z:=A_v\cup A_w\), with \(T:=\lceil L_0\rceil-3\). We have \(|Z|=2\lfloor L_0\rfloor\) and \(|Z\cap W|\le2\lfloor L_0\rfloor\). Moreover, \(4\lfloor L_0\rfloor+T+4E(r_0,q,\beta)\le5\beta\) and \(2\lfloor L_0\rfloor+T+4E(r_0,q,\beta)\le12\lfloor L_0\rfloor\). Hence there is an \(a\)--\(b\) path \(P\), disjoint from \(A_v\cup A_w\), such that
$
T\le\ell(P)\le T+4E(r_0,q,\beta).
$

The cycle \(C:=avwbPa\) has length \(\ell(P)+3\). Thus \(|C|\ge\lceil L_0\rceil\ge L_0\), while
$
|C|\le\lceil L_0\rceil+4E(r_0,q,\beta)\le L_0+1+\frac{L_0}{2}\le2L_0.
$
Since \(L_0=n/10^{10}\), we obtain \(n/10^{10}\le|C|\le2n/10^{10}\).
\end{proof}

For the rest of the paper, fix a sufficiently large absolute integer \(M\) such that \(M\ge2\times10^{15}C_0\), and fix \(k_0=k_0(M)\) sufficiently large, increasing it as necessary below.
Recall that \(H=K_{m_1,\ldots,m_k}\), where \(m_1\le\cdots\le m_k\), and that \(h=|H|\). In particular, \(m_2\) denotes the order of the second part in this nondecreasing ordering.

\begin{lemma}\label{lem:local-master}
There is an absolute constant $C_2$ such that the following holds. Let $H'=K_{b_1,\ldots,b_{s+1}}$ be a part-subgraph of $H$, where $1\le s\le k-1$. Suppose that $n\ge C_2h$, $|G|\ge sn-n/10$ and $\ol G\ \text{is }H'\text{-free}$.
Assume further that any two disjoint vertex sets of order at least $m_2+10^{18}$ are connected by at least $10^{18}$ pairwise vertex-disjoint
paths. Then:
\begin{enumerate}[label=\textup{(\roman*)}]
\item if $s\ge2$, then $G$ contains $C_n$;
\item if $s=1$, then $G$ contains a protected switcher $J$ satisfying $0.64n<\ell(J)<0.70n$ and $20h\le\Omega(J)\le C_0h$.
Every protected block of $J$ has order and flexibility at most
$C_0h/M$.
\end{enumerate}
\end{lemma}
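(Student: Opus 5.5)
We build a protected switcher with Lemma~\ref{lem:flexible-switcher} and enlarge it with long cycles from Lemma~\ref{lem:long-cycle}, joined via Lemma~\ref{lem:protected-splice}. The paths needed for each joining come from the connectivity hypothesis. We then trim free length with Lemma~\ref{lem:shortcut}, and finish with Lemma~\ref{lem:interval-routes} in case~(i), or stop in the window $(0.64n,0.70n)$ in case~(ii).

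\emph{Initial switcher.} Apply Lemma~\ref{lem:flexible-switcher} with $L=H'$, $a=s+1$ and the fixed $M$; this is valid since $|G|\ge sn-n/10\ge(a-1)n/10$. It gives $F^*$ with $|F^*|\le C_0h$, $\Omega(F^*)\ge40h$, $M$ protected blocks, and $b_0,\omega_0\le C_0h/M$.

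\emph{Growth step.} Given the current switcher $F$, let $G'=G-V(F)$. Since $|F|$ stays $O(n)$, with a small constant, throughout, $G'$ still has order at least $(a-1)n/10$. Its complement is $H'$-free, so Lemma~\ref{lem:long-cycle} gives a cycle $C$ disjoint from $F$ with $n/10^{10}\le|C|\le2n/10^{10}$. Apply the connectivity hypothesis to $V(F)$ and $V(C)$, both of order at least $m_2+10^{18}$ once $C_2$ is large, using that $m_2\le h/(k-1)\ll n$. This gives $10^{18}$ vertex-disjoint $F$--$C$ paths. These need not be short, so we first shorten them: among vertex-disjoint families, pick one minimizing total length. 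Disjointness then forces most paths to have length $O(n/10^{18})$, since $|G|$ is at most about $(k-1)n+\sigma$, else the result already follows. I expect this to need an averaging argument, retaining at least $q=10^{17}$ paths of length $t\le n/10^{15}$ each. Then Lemma~\ref{lem:protected-splice} applies.

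Each joining multiplies $\ell$, $\Omega$ and $W$ by at least $1-10^{-14}$, adds at least $0.99|C|$ to $\ell$ and $W$, and costs $4b_0$ in length and $4\omega_0$ in flexibility. We perform at most about $10^{10}$ joinings until $\ell(F)$ first exceeds $0.70n$ in case~(ii), or $n$ in case~(i). The total flexibility loss is then at most $10^{10}\cdot4C_0h/M+10^{-4}\Omega$. The choice $M\ge2\times10^{15}C_0$ keeps this below $20h$ in total, so $\Omega\ge20h$ survives. Since each step adds at most $|C|+2t\le3n/10^{10}$ to the length, the first overshoot satisfies $\ell<0.70n+3n/10^{10}$ in case~(ii). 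Almost all of that length is free, because $W\ge\ell-O(|F^*|)-(\text{losses})$.

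\emph{Trimming.} Shorten free subpaths one at a time with Lemma~\ref{lem:shortcut}(ii), taking $J=H'$ of order at most $h$. Each application lowers $\ell$ by an amount in $[1,h+k]$ while leaving every switch, and hence $\Omega$, unchanged. Continue until $\ell$ lies in $(0.64n,0.70n)$ in case~(ii), or in $[n,n+2h]$ in case~(i). The latter needs $\Omega\ge2h$ and a bound on the number of free paths, $p\le3\times10^{10}+O(1)$, so that enough free length of at least $h+k$ remains in a single path. In case~(i), Lemma~\ref{lem:interval-routes} with $\Omega\ge20h$ then yields a route of length exactly $n$, i.e.\ a copy of $C_n$.

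The upper bound $\Omega\le C_0h$ in~(ii) holds because blocks are only ever discarded, never added, and the block bounds are inherited by Lemma~\ref{lem:protected-splice}. The main obstacle I expect is producing the short disjoint joining paths. If the minimal-total-length averaging fails, I would instead find them directly from expansion: take an expanding subgraph around $C$ via Lemma~\ref{lem:expanding-subgraph} and use Lemma~\ref{lem:short-path} repeatedly while avoiding earlier paths.
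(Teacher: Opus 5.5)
Your architecture is the paper's: an initial switcher from Lemma~\ref{lem:flexible-switcher}, repeated splicing of long cycles via Lemma~\ref{lem:protected-splice}, trimming free subpaths with Lemma~\ref{lem:shortcut}(ii), and finishing with Lemma~\ref{lem:interval-routes}. But the step you flag as the main obstacle, bounding the lengths of the joining paths, is not handled correctly.

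\textbf{Why your averaging argument fails.}
\begin{itemize}
\item The lemma places no upper bound on $|G|$. Even in its applications, $|G|$ can be of order $(k-1)n$ with $k$ unbounded. Averaging over $10^{18}$ disjoint paths then gives lengths of order $kn/10^{18}$, not $n/10^{15}$.
\item You cannot pass to a smaller induced subgraph to force $|G|=O(n)$, because the connectivity hypothesis is not inherited by induced subgraphs.
\item The claim that ``the result already follows'' when $|G|$ is larger is circular.
\item The fallback via Lemma~\ref{lem:short-path} does not apply. That lemma works inside one expanding subgraph of controlled order. The new cycle $C$ comes from Lemma~\ref{lem:long-cycle} in some unrelated expander of $G-V(F)$, so $F$ and $C$ need not lie in a common one.
\end{itemize}
You genuinely need a bound on $t$. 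With Lemma~\ref{lem:protected-splice} used as stated, long joining paths can make $\ell(F')$ exceed the guaranteed free length $W(\mathcal Q')$ by up to $2t$. Then neither your overshoot bound nor your trimming goes through.

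\textbf{The missing idea: use that $\ol G$ is $H'$-free.}
\begin{itemize}
\item Cut each joining path down to its segment from its last vertex in $F$ to its first vertex in $C$.
\item Replace that segment by a shortest path in $G$ on its own vertex set. This keeps the paths pairwise vertex-disjoint and internally disjoint from $F\cup C$.
\item A shortest path in an induced subgraph is induced in $G$. So Lemma~\ref{lem:shortcut}(i) with $J=H'$ bounds its length by $|H'|+s-2\le h+k$.
\end{itemize}
In fact, a vertex-disjoint family of minimum total length already consists of induced paths for exactly this reason: a chord would let you shorten one path inside its own vertex set. So your chosen object was fine; only the justification was wrong.

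This gives all $q=10^{18}$ paths with $t=h+k$, so each splice adds at most $|C|+2(h+k)$ to the length, and the rest of your outline then works. One difference in bookkeeping: the paper stops once $\sum_i |C_i|$ reaches $4n/3$ or $2n/3$, rather than when $\ell$ first exceeds the target. This makes case~(ii) land directly in the window $(0.64n,0.70n)$ with no trimming.
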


\begin{proof}
Applying Lemma~\ref{lem:flexible-switcher} with \(L=H'\), we obtain a protected switcher \(J_0\) such that \(|J_0|\le C_0h\),
\(\Omega(J_0)\ge40h\), and
\(b_0,\omega_0\le C_0h/M\). Since
\(\Omega(J_0)\le|J_0|\) and \(m_2\le h\), increasing \(k_0\) gives
\(|J_0|\ge m_2+10^{18}\) and \(C_0h/M\ge1\). Moreover,
\(n\ge C_2h\) and \(C_2>100C_0\) give \(|J_0|<n/100\). Replacing
\(b_0\) by \(\max\{b_0,1\}\), we may assume that
\(1\le b_0\le C_0h/M\).

We construct \(J_i\) iteratively. Given \(J_{i-1}\), apply
Lemma~\ref{lem:long-cycle} in \(G-V(J_{i-1})\) to obtain a cycle
\(C_i\), and connect \(C_i\) to \(J_{i-1}\) by two paths to form $J_i$ by Lemma \ref{lem:protected-splice}. Write \(c_i:=|C_i|\), and let \(g\) be the least index such
that
\[
\sum_{i=1}^g c_i\ge
\begin{cases}
4n/3,&s\ge2,\\
2n/3,&s=1.
\end{cases}
\]
Since \(c_i\ge n/10^{10}\), we have
\begin{equation}\label{eq:number-long-cycles}
g\le2\times10^{10}.
\end{equation}

Before \(C_i\) is chosen,
$
|J_{i-1}|
\le C_0h+\sum_{j<i}c_j+2(i-1)(h+k),
$
as each connecting step uses two paths of length at most \(h+k\) by Lemma \ref{lem:shortcut}(i).
If \(s=1\), then \(\sum_{j<i}c_j<2n/3\), and hence
\[
|G-V(J_{i-1})|
\ge0.9n-\frac{2n}{3}-C_0h-4\times10^{10}(h+k)
>\frac n{10}.
\]
If \(s\ge2\), then \(\sum_{j<i}c_j<4n/3\), and
\[
|G-V(J_{i-1})|
\ge\left(s-\frac1{10}\right)n-\frac{4n}{3}
-C_0h-4\times10^{10}(h+k)
>\frac{sn}{10}.
\]
The last inequalities follow from \(k\le h\) and \(n\ge C_2h\), after
choosing \(C_2\) sufficiently large. Thus
Lemma~\ref{lem:long-cycle} applies at every stage.
Since \(c_g\le2n/10^{10}\),
$
\frac{4n}{3}\le\sum_{i=1}^g c_i
<\frac{4n}{3}+\frac{2n}{10^{10}}
$
if
$
s\ge2,
$
while
$
\frac{2n}{3}\le\sum_{i=1}^g c_i
<\frac{2n}{3}+\frac{2n}{10^{10}}
$
if
$
s=1.
$

We claim that at every stage $\Omega(J_{i-1})\ge20h$ will hold. Hence $|J_{i-1}|\ge m_2+10^{18}$, and
$c_i\ge n/10^{10}\ge m_2+10^{18}$. The connectivity hypothesis gives
$10^{18}$ vertex-disjoint joining paths. For each path, take the subpath from its last vertex in $J_{i-1}$ to its first vertex in $C_i$, and replace this subpath by a shortest path in its own vertex set. Lemma~\ref{lem:shortcut}(i), applied to $H'$, bounds its length by $|H'|+(s+1)-3\le h+k$. Since each cycle is chosen immediately before it is joined to the current switcher, no joining path meets a cycle chosen later.
Let $L_i=\ell(J_i)$ and $\Omega_i=\Omega(J_i)$. Take
$\mathcal Q_0=\varnothing$.  For $i\ge1$, let $\mathcal Q_i$ be the free family obtained from $\mathcal Q_{i-1}$ by Lemma~\ref{lem:protected-splice},
and set $W_i=W(\mathcal Q_i)$ and $p_i=p(\mathcal Q_i)$.
Thus
\begin{align}
 L_i&\ge(1-10^{-15})L_{i-1}+0.99c_i-4b_0,
 \label{eq:L-rec}\\
 L_i&\le L_{i-1}+c_i+2(h+k),
 \label{eq:L-up-rec}\\
 \Omega_i&\ge(1-10^{-15})\Omega_{i-1}-4\omega_0,
 \label{eq:Omega-rec}\\
 W_i&\ge(1-10^{-15})W_{i-1}+0.99c_i-1,
 \label{eq:W-rec}\\
 p_i&\le p_{i-1}+3.
 \label{eq:p-rec}
\end{align}
Iterating \eqref{eq:Omega-rec}, and using
\((1-10^{-15})^i\ge1-i10^{-15}\), gives
$
\Omega_i
\ge(1-i10^{-15})\Omega_0-4i\omega_0
$
for every \(0\le i\le g\). By
\eqref{eq:number-long-cycles},
\(i10^{-15}\le2\times10^{-5}\), while
$
4i\omega_0
\le\frac h{100}.
$
Since \(\Omega_0\ge40h\), it follows that
$
\Omega_i
\ge(1-2\times10^{-5})40h-\frac h{100}
>20h.
$
Thus \(\Omega(J_i)\ge20h\) throughout the construction.

Since \(10^{-15}g\le2\times10^{-5}\), we have
\((1-10^{-15})^{g-i}\ge1-2\times10^{-5}\) for every \(1\le i\le g\).
Iterating \eqref{eq:L-rec} and \eqref{eq:W-rec}, and using
\(b_0\le C_0h/M\), gives
\[
L_g\ge0.99(1-2\times10^{-5})\sum_{i=1}^g c_i-4gb_0
\ge0.989\sum_{i=1}^g c_i-\frac{8\times10^{10}C_0h}{M}
\]
and
\[
W_g\ge0.99(1-2\times10^{-5})\sum_{i=1}^g c_i-g
\ge0.989\sum_{i=1}^g c_i-2\times10^{10}.
\]
Also, \eqref{eq:p-rec} gives \(p_g\le3g\le6\times10^{10}\).
After fixing \(M\) and increasing \(C_2\), the minimality of \(g\) yields \(L_g>1.30n\) and \(W_g>1.29n\) when \(s\ge2\), and \(L_g>0.64n\) when \(s=1\).
Iterating \eqref{eq:L-up-rec} gives
\(L_g\le C_0h+\sum_{i=1}^g c_i+4\times10^{10}(h+k)\). By the minimality of \(g\) and \(c_g\le2n/10^{10}\), \(L_g<1.36n\) when \(s\ge2\) and \(L_g<0.70n\) when \(s=1\), after increasing \(C_2\).
Every switch of \(J_g\) belongs to \(J_0\), and every protected block of \(J_g\) is an unchanged protected block of \(J_0\). Hence
\(\Omega(J_g)\le\Omega(J_0)\le|J_0|\le C_0h\), while the bounds
\(b_0,\omega_0\le C_0h/M\) remain valid. This proves~\textup{(ii)}.

Suppose that \(s\ge2\), and set \(j:=|H'|\), \(a:=s+1\), and
\(d_0:=j+a-2\le h+k\). By \eqref{eq:p-rec} and
\eqref{eq:number-long-cycles}, \(p_g\le6\times10^{10}\). Since
\(d_0\le h+k\le2h\) and \(n\ge C_2h\), we have \(p_gd_0\le n/100\).
Moreover, \(\sum_{i=1}^g c_i\le4n/3+2n/10^{10}\). The estimates for
\(W_g\) and \(L_g\) therefore give
$
W_g-L_g+n-p_gd_0
\ge n-0.011\sum_{i=1}^g c_i-C_0h
   -4\times10^{10}(h+k)-2\times10^{10}-p_gd_0
>0.
$
Hence
\begin{equation}\label{eq:free-invariant}
W_g\ge L_g-n+p_gd_0.
\end{equation}

The shortening steps modify only free subpaths, so the protected blocks and their total flexibility remain unchanged. If the current longest route has length \(L>n+d_0\), then every route has length at least \(L-C_0h>n+d_0-C_0h\ge j+a\). Moreover,
\eqref{eq:free-invariant} gives \(W>p_gd_0\), so some free subpath has at least \(d_0\) edges.
Apply Lemma~\ref{lem:shortcut}\textup{(ii)} to \(d_0\) consecutive edges of such a subpath. The replacement shortens every route by some
\(t\in[1,d_0-1]\). Thus \(L'=L-t\), \(W'\ge W-t\), and
$
W'\ge L'-n+p_gd_0.
$
Hence \eqref{eq:free-invariant} is preserved, while
\(L'>n+d_0-(d_0-1)>n\). Repeating this step yields
\(n\le L\le n+d_0\).
Finally, \(\Omega(J_g)\ge20h\ge d_0\), and therefore
\(L-\Omega(J_g)\le L-d_0\le n\le L\). Lemma~\ref{lem:interval-routes}
gives a route of length \(n\), proving~\textup{(i)}.
\end{proof}

\section{Proof of Theorem \ref{thm:main}}

Finally we derive Theorem~\ref{thm:main} from the following stability dichotomy.
\subsection{Stability}
The following lemma is a variant of Lemma~6.2 in~\cite{HHKL}. It gives the following dichotomy: Either the problem reduces to a forbidden multipartite graph with one fewer part, or \(G\) is close to the extremal construction, with \(k-1\) large
pairwise anticomplete sets that are internally highly connected.
\begin{lemma}\label{lem:stability}
There is an absolute constant $C_3$ such that the following holds.
Let $H=K_{m_1,\ldots,m_k}$ have order $h$, where
$m_1\le\cdots\le m_k$, $k\ge k_0$, and $h/k\le k^{22}$, and let $z\ge0$.
Let $\widehat H$ be obtained from $H$ by deleting the part of order $m_2$.
Suppose that $G$ is $C_n$-free,
$
 n\ge C_3h,
$
$
|G|\ge(k-1)(n-1)+z,
$
and
$
\ol G\text{ is }H\text{-free}.
$
Then at least one of the following holds.
\begin{enumerate}[label=\textup{(\roman*)}]
\item There is an induced subgraph $G'\subseteq G$ with
$|G'|\ge(k-2)(n-1)+z$ such that $\ol{G'}$ is $\widehat H$-free.
\item There are pairwise disjoint, pairwise anticomplete sets
$A_1,\ldots,A_{k-1}$ such that, for every $i$,
\begin{enumerate}[label=\textup{(\alph*)}]
\item $|A_i|\ge0.95n$;
\item $\ol{G[A_i]}$ is $K_{m_1,m_2}$-free;
\item any two disjoint subsets of $A_i$ of order at least
$m_2+10^{18}$ are joined in $G[A_i]$ by at least $10^{18}$ pairwise
vertex-disjoint paths.
\end{enumerate}
\end{enumerate}
\end{lemma}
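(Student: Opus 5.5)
We argue by contraposition: assume (i) fails and construct the sets in (ii), following the scheme of Lemma~6.2 in~\cite{HHKL}.

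\textbf{Step 1: every large set contains $\widehat H$ in its complement.} Since (i) fails, every induced subgraph $G'$ with $|G'|\ge(k-2)(n-1)+z$ has $\widehat H\subseteq\ol{G'}$.

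\textbf{Step 2: peel off disjoint $\widehat H$-free-type pieces greedily.} We repeatedly use the dichotomy behind Lemma~\ref{lem:expanding-subgraph}. Take a maximal family of disjoint sets $A_1,\dots,A_t$ satisfying three conditions:
\begin{itemize}
\item each $A_i$ spans a connected piece of $G$ with small boundary;
\item each $G[A_i]$ is $C_n$-free;
\item each $\ol{G[A_i]}$ is $K_{m_1,m_2}$-free.
\end{itemize}
To obtain such pieces, consider a sparse cut $S$ versus $T=V\setminus(S\cup N_G(S))$. The cut is complete in $\ol G$, so the parts of $H$ split between the two sides. Removing such a cut from $G$ costs only few vertices.

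The key counting step is as follows. Suppose some piece $A_i$ has $|A_i|\ge n$ and its complement contains a part-subgraph with two parts. Then Lemma~\ref{lem:local-master}(i), applied with $s\ge2$, produces $C_n$, which is a contradiction. Conversely, suppose some piece has $|A_i|\ge 1.05n$ while $\ol{G[A_i]}$ is $K_{m_1,m_2}$-free. Here we apply Corollary~\ref{cor:PS-bip}, together with Lemma~\ref{lem:PS-path}, to a long path in $G[A_i]$. Such a path exists, since expansion together with Lemma~\ref{lem:long-cycle} gives long cycles, and $\ol G$ restricted to this set forbids $K_{m,m}$. This again yields $C_n$.

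So every piece has order below about $1.05n$. Counting $|G|\ge(k-1)(n-1)+z$ and using Step~1 then forces exactly $k-1$ pieces. Each must have order at least $0.95n$: otherwise deleting it leaves at least $(k-2)(n-1)+z$ vertices, while the remaining pieces absorb $\widehat H$ only partially. That would contradict Step~1 after we glue the complete-in-complement structure. This yields (a) and (b). Pairwise anticompleteness comes from discarding the small separators, which cost $O(h)$ vertices and are absorbed into the $0.05n$ slack.

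\textbf{Step 3: connectivity (c) by Menger.} Suppose disjoint $X,Y\subseteq A_i$ of order at least $m_2+10^{18}$ are separated by a set $Z$ with $|Z|<10^{18}$. Then $A_i\setminus Z$ splits into two sides with no edges between them, each containing at least $m_2$ vertices of $X$ or $Y$. These two sides give $K_{m_2,m_2}\supseteq K_{m_1,m_2}$ in $\ol{G[A_i]}$, contradicting (b). Hence Menger's theorem yields $10^{18}$ vertex-disjoint paths.

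\textbf{Main obstacle.} The main obstacle is Step~2, specifically making the greedy decomposition produce exactly $k-1$ sets of order at least $0.95n$ with only $O(h)$ loss. The plan is to run Lemma~\ref{lem:expanding-subgraph}'s sparse-cut recursion on $G$ itself with rate $\phi$. Each sparse cut reduces the number of parts, so the recursion depth is at most $k$, but the total loss must stay $o(n)$ per piece. We will use $n\ge C_3h$ and the additive $200\beta$ term, with $\beta=\Theta(h)$, to bound the loss by $O(kh)$ overall, spread as $O(h)$ per piece.
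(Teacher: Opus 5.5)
Your Step~3 is a valid deduction, and simpler than the paper's: once (b) holds, a separator $Z$ with $|Z|<10^{18}$ leaves $X\setminus Z$ and $Y\setminus Z$ anticomplete and of order greater than $m_2$, which gives a $K_{m_1,m_2}$ in $\ol{G[A_i]}$. The problem is Step~2. It carries the entire content of the lemma, and as outlined it does not go through.

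What you need from the failure of (i) is precise: every $B$ with $|B|\ge m_2$ satisfies $|B\cup N_G(B)|\ge n$. Otherwise $U=V(G)\setminus(B\cup N_G(B))$ has at least $(k-2)(n-1)+z$ vertices, so $\ol{G[U]}\supseteq\widehat H$, and $m_2$ vertices of $B$ complete this to $H$. The paper then repeatedly removes, at most $k$ times, a set of fewer than $10^{18}$ vertices that increases the number of components of order at least $m_2$. So the discarded set satisfies $|S|<10^{18}k$. The neighbourhood property then shows that small components contain fewer than $m_2$ vertices in total, and that every other component has order at least $n-|S|\ge0.95n$.

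Your loss budget of $O(h)$ per piece, i.e.\ $O(kh)$ in total, from the sparse cuts of Lemma~\ref{lem:expanding-subgraph}, is far too large. Only $n\ge C_3h$ is assumed, so $kh$ can greatly exceed $n$. The counting that forces $k-1$ pieces needs the total discarded set to be below roughly $n-2h$, using that each $K_{m_1,m_2}$-free piece has order less than $n+m_1$ by Corollary~\ref{cor:PS-bip}. Your bound $1.05n$ is also too weak for this count.

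More seriously, your outline never handles components whose complement does contain $K_{m_1,m_2}$. A priori there may be only $k-2$ large components, one of order about $2n$ supporting two parts of $H$. The vertex count does not exclude this, and a maximal family of pieces that are $K_{m_1,m_2}$-free in the complement need not cover most of $G$.

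Your appeal to Lemma~\ref{lem:local-master}(i) does not fix this. That lemma requires three things: $\ol{G[A_i]}$ is $H'$-free for a part-subgraph $H'$ with $s+1$ parts, $|A_i|\ge sn-n/10$, and the connectivity hypothesis. It does not apply under ``$|A_i|\ge n$ and the complement contains a two-part subgraph.'' Moreover, you obtain connectivity only from (b), which is circular for exactly these components.

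The missing idea is the paper's parameter $s_i$. For each component $A_i$, let $s_i$ be the largest $j$ such that $\ol{G[A_i]}$ contains the part of order $m_{k+1-i}$ together with the $j-1$ parts of orders $m_{k-t},m_{k-t-1},\ldots$. Combining parts across the pairwise anticomplete components gives $\sum_i s_i\le k-1$, so $\ol{G[A_i]}$ is free of the next graph in this sequence. Then:
\begin{itemize}
\item if $s_i\ge2$, Lemma~\ref{lem:local-master}(i) gives $|A_i|<s_in-n/10$;
\item if $s_i=1$, Corollary~\ref{cor:PS-bip} gives $|A_i|<n+m_{k-t}$.
\end{itemize}
Comparing with $|G|$ forces $s_i=1$ for all $i$ and $t=k-1$, and only then does (b) follow.

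For this to work, (c) must be established before (b). The paper does so using Menger's theorem, the termination of the separator-removal process, and the neighbourhood property above.
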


\begin{proof}
Assume that \(C_3\) is sufficiently large and that \textup{(i)} does not
hold. Start with \(A=V(G)\) and \(S=\varnothing\). Whenever there is a
set \(X\subseteq A\) with \(|X|<10^{18}\) such that \(G[A-X]\) has more
components of order at least \(m_2\) than \(G[A]\), remove \(X\) from
\(A\) and add it to \(S\). Continue for at most \(k\) rounds, or until no
such set exists. Throughout the process,
\begin{equation}\label{eq:S-bound}
 |S|<10^{18}k.
\end{equation}

For every \(B\subseteq V(G)\) with \(|B|\ge m_2\),
\begin{equation}\label{eq:neighbourhood-n}
 |B\cup N_G(B)|\ge n.
\end{equation}
Otherwise, let \(U:=V(G)\setminus(B\cup N_G(B))\). Then
$
 |U|\ge(k-2)(n-1)+z.
$
Since \textup{(i)} does not hold, \(\ol{G[U]}\) contains a copy of
\(\widehat H\). There are no edges between \(B\) and \(U\) in \(G\), so any \(m_2\) vertices of \(B\), together with this copy of \(\widehat H\), form a copy of \(H\) in \(\ol G\), a contradiction.

For every \(B\subseteq A\) with \(|B|\ge m_2\),
\eqref{eq:S-bound} and \eqref{eq:neighbourhood-n} give
\begin{equation}\label{eq:neighbourhood-A}
 |B\cup N_{G[A]}(B)|\ge n-|S|\ge h.
\end{equation}
Since \(k\ge3\) and \(m_1\le m_2\le\cdots\le m_k\), we have
\(2m_2\le(k-1)m_2\le h\). The components of \(G[A]\) of order less than
\(m_2\) contain fewer than \(m_2\) vertices in total. Otherwise, the
union \(B\) of some of them satisfies \(m_2\le|B|<2m_2\). Since
\(N_{G[A]}(B)=\varnothing\),
$
 |B\cup N_{G[A]}(B)|=|B|<2m_2\le h,
$
a contradiction to \eqref{eq:neighbourhood-A}.

Every remaining component \(C\) of \(G[A]\) has order at least \(m_2\). Since \(N_{G[A]}(C)=\varnothing\), \eqref{eq:neighbourhood-A} gives
$
 |C|\ge n-|S|.
$
Denote these components by \(A_1,\ldots,A_t\).
We have \(t\le k-1\). Otherwise, choose sets of orders
\(m_1,\ldots,m_k\), one from each of \(k\) distinct components. Since
there are no edges between distinct components of \(G[A]\), these sets
form the parts of a copy of \(H\) in \(\ol G\), a contradiction. The
procedure therefore cannot complete \(k\) rounds, since each round
increases the number of components of order at least \(m_2\).

Fix \(i\in[t]\), and let \(B_1,B_2\subseteq A_i\) be disjoint sets of
order at least \(m_2+10^{18}\). If there are fewer than \(10^{18}\)
pairwise vertex-disjoint \(B_1\)--\(B_2\) paths in \(G[A_i]\), then
Menger's theorem gives a set \(X\subseteq A_i\) with \(|X|<10^{18}\)
such that no component of \(G[A_i]-X\) meets both \(B_1\setminus X\) and
\(B_2\setminus X\). For each \(j\in\{1,2\}\), some component meeting
\(B_j\setminus X\) has order at least \(m_2\). Otherwise, the union \(U\)
of some such components satisfies \(m_2\le|U|<2m_2\) and
\(N_G(U)\subseteq X\cup S\), so
$
 |U\cup N_G(U)|<2m_2+10^{18}(k+1)<n,
$
a contradiction to \eqref{eq:neighbourhood-n}. Thus \(G[A_i]-X\) contains two distinct components of order at least \(m_2\), contradicting the choice of \(A\). This proves \textup{(c)}.

\begin{claim}
$t=k-1$.    
\end{claim}
For each \(i\in[t]\), set \(H_i^{(1)}:=\overline {K_{m_{k+1-i}}}\), and, for
\(2\le j\le k-t+1\), set
$$
 H_i^{(j)}
 :=K_{m_{k+1-i},m_{k-t},m_{k-t-1},\ldots,m_{k-t-j+2}}.
$$
Thus \(H_i^{(j)}\) is obtained from \(H_i^{(j-1)}\) by adding a part of
order \(m_{k-t-j+2}\). Since \(|A_i|\ge h\ge m_{k+1-i}\),
\(\ol{G[A_i]}\) contains \(H_i^{(1)}\). Let \(s_i\) be the largest
integer \(j\in[k-t+1]\) for which
\(H_i^{(j)}\subseteq\ol{G[A_i]}\).

Suppose that \(\sum_{i=1}^t s_i\ge k\), and fix a copy of
\(H_i^{(s_i)}\) in \(\ol{G[A_i]}\) for each \(i\in[t]\). The parts of
orders \(m_{k+1-i}\) give the \(t\) largest parts
\(m_k,\ldots,m_{k-t+1}\) of \(H\).

Set \(r_i:=s_i-1\). The other parts in the \(i\)th copy have orders
$
 m_{k-t},m_{k-t-1},\ldots,m_{k-t-r_i+1}.
$
Moreover,
$
 \sum_{i=1}^t r_i=\sum_{i=1}^t s_i-t\ge k-t.
$
For each \(j\in[k-t]\), the number of these parts having order at least
\(m_{k-t-j+1}\) is
$
 \sum_{i=1}^t\min\{r_i,j\}
 \ge\min\left\{\sum_{i=1}^t r_i,j\right\}
 =j.
$
Hence, after arranging the available parts in decreasing order, their
orders are termwise at least
$
 m_{k-t},m_{k-t-1},\ldots,m_1.
$
By deleting vertices if necessary, we obtain the remaining \(k-t\)
parts of \(H\). Since the sets \(A_1,\ldots,A_t\) are pairwise
anticomplete in \(G\), all edges between parts chosen from distinct
sets are present in \(\ol G\). Thus \(\ol G\) contains \(H\), a
contradiction. Therefore
\begin{equation}\label{eq:sum-si}
 \sum_{i=1}^t s_i\le k-1.
\end{equation}
Since \(s_j\ge1\) for every \(j\in[t]\), we have
\(s_i+t-1\le\sum_{j=1}^t s_j\le k-1\), and hence \(s_i\le k-t\).
The maximality of \(s_i\) now implies that
\(\ol{G[A_i]}\) is \(H_i^{(s_i+1)}\)-free.

If \(s_i>1\), apply
Lemma~\ref{lem:local-master}\textup{(i)} to \(G[A_i]\), with
\(H'=H_i^{(s_i+1)}\) and \(s=s_i\). Since \(G[A_i]\) is \(C_n\)-free,
we obtain \(|A_i|<s_i n-n/10\). If \(s_i=1\), then
\(\ol{G[A_i]}\) is \(K_{m_{k-t},m_{k+1-i}}\)-free, so
Corollary~\ref{cor:PS-bip}, with \(a=m_{k-t}\) and
\(b=m_{k+1-i}\), gives \(|A_i|<n+m_{k-t}\). Here we take \(C_3\ge\max\{C_2,2\times10^{49}\}\).

Since \(S\) and the components of order less than \(m_2\) contain fewer
than \(10^{18}k+m_2\) vertices,
\begin{equation}\label{eq:sum-A-lower}
 \sum_{i=1}^t|A_i|>
 (k-1)(n-1)+z-10^{18}k-m_2.
\end{equation}

If \(s_i>1\) for some \(i\), then the preceding bounds and
\eqref{eq:sum-si} give
\(\sum_{j=1}^t|A_j|<(k-1)n-n/10+(t-1)m_{k-t}
\le(k-1)n-n/10+h\), since \((t+1)m_{k-t}\le h\). This is a
contradiction to \eqref{eq:sum-A-lower}. Hence \(s_i=1\) for every
\(i\). If \(t<k-1\), then
\(\sum_{i=1}^t|A_i|<t(n+m_{k-t})\le(k-2)n+h\), since
\(tm_{k-t}\le h\). This is again a contradiction to
\eqref{eq:sum-A-lower}. Thus \(t=k-1\).\hfill $\blacksquare$
\medskip

For every \(i\), we have \(|A_i|\ge n-|S|\ge0.95n\). Since
\(m_{k-t}=m_1\), if \(\ol{G[A_i]}\) contained \(K_{m_1,m_2}\), we could
choose parts of orders \(m_3,\ldots,m_k\), one from each of the remaining
\(k-2\) sets. As the sets \(A_1,\ldots,A_{k-1}\) are pairwise
anticomplete, these parts would form a copy of \(H\) in \(\ol G\), a
contradiction. Therefore \textup{(a)}--\textup{(c)} hold for
\(A_1,\ldots,A_{k-1}\).
\end{proof}

The next lemma extracts an almost spanning expanding subgraph from each
large set from the outcome (ii) of Lemma \ref{lem:stability}.
\begin{lemma}\label{lem:large-expander}
Let $b\ge a\ge1$, let $N=|G|\ge10^5(a+b)$, and suppose that $\ol G$ is
$K_{a,b}$-free. Set $d:=\lceil(a+b)/2\rceil$ and $\beta:=N/10^4$.
Then $G$ contains an induced subgraph $F$ with $|F|\ge N-d$ which
multi-$(d,2,2,\beta,16)$-expands. Moreover, $\beta\ge2d$ and
$|F|\le p_{2,2}(d,2\beta)$.
\end{lemma}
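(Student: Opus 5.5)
The plan is to mimic Case~2 in the proof of Lemma~\ref{lem:expanding-subgraph}. I will remove a small set of poorly expanding vertices and check both conditions of Definition~\ref{def:compact-expansion} directly. In the case \(r=q=2\), the number \(s\) in the definition of \(\phi_{2,2,\beta}\) is always \(1\). Hence \(\phi_{2,2,\beta}\equiv\eta(1+\log 2)/\log 4\) is a tiny absolute constant, at most \(3\eta\). The side conditions are immediate. First, \(\beta=N/10^4\ge10(a+b)\ge2d\). Second, since \(K\beta=N\), we have \(p_{2,2}(d,2\beta)=2d+\lceil 3K\beta\rceil\ge 3N\ge|F|\).

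The key observation is a ``large sets see almost everything'' property. Let \(Y\subseteq V(G')\) for an induced subgraph \(G'\subseteq G\), and let \(T:=V(G')\setminus(Y\cup N_{G'}(Y))\). Since \(Y\) and \(T\) are anticomplete in \(G\), the complement \(\ol G\) contains every bipartite graph with one side in \(Y\) and the other in \(T\). This gives two bounds:
\begin{itemize}
\item If \(|Y|\ge b\), then \(|T|<a\).
\item If \(a\le|Y|<b\), then \(|T|<b\).
\end{itemize}
In either case, whenever \(|Y|\ge a\),
\[
|N_{G'}(Y)|>|G'|-|Y|-b .
\]

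Now choose \(X\subseteq V(G)\) of maximum size subject to \(|X|\le 2d\) and \(|N_G(X)|<16|X|\), taking \(X=\varnothing\) if no nonempty such set exists. I claim that \(|X|<a\le d\). Otherwise the observation gives \(|N_G(X)|>N-2d-b\), and this exceeds \(32d\ge16|X|\) because \(N\ge10^5(a+b)\). Set \(F:=G-X\). Then \(|F|\ge N-d\), as required.

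It remains to verify the expansion of \(F\) for a nonempty \(Y\subseteq V(F)\) with \(|Y|\le|F|/2\). There are two cases.
\begin{itemize}
\item Suppose \(|Y|<a\) and \(|N_F(Y)|<16|Y|\). Then \(|X\cup Y|<2d\) and
\[
|N_G(X\cup Y)|\le|N_G(X)|+|N_F(Y)|<16|X\cup Y|,
\]
which contradicts the maximality of \(X\). So condition (i) holds for such \(Y\), and these sets have size below \(\beta\).
\item Suppose \(|Y|\ge a\). The observation applied in \(F\) gives
\[
|N_F(Y)|>|F|-|Y|-b\ge |F|/2-b\ge 0.49N .
\]
This exceeds \(16|Y|\) when \(|Y|\le\beta=N/10^4\), which is condition (i). It also exceeds \(3\eta|Y|+10\beta\le 3\eta N+N/1000\) for every \(|Y|\le|F|/2\), which is condition (ii).
\end{itemize}

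Thus \(F\) multi-\((d,2,2,\beta,16)\)-expands. The only delicate point is making sure that the removed set \(X\) is small enough to give \(|F|\ge N-d\). This is exactly the step where the \(K_{a,b}\)-freeness forces any poorly expanding set to have order below \(a\).
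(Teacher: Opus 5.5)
Your proof is correct and follows essentially the same route as the paper: delete a maximum set $X$ with $|X|\le 2d$ and $|N_G(X)|<16|X|$, use $K_{a,b}$-freeness of the complement to make every set of order at least a threshold expand, and use the maximality of $X$ to handle smaller sets. The differences are cosmetic. You split at $a$ rather than $d$, and you apply the sharper bound $|N_{G'}(Y)|>|G'|-|Y|-b$ directly in $F$. The paper instead proves $|N_G(S)|\ge\phi|S|+20\beta$ in $G$ for $d\le|S|\le N/2$ and then subtracts $|X|<d$.
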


\begin{proof}
Write $\phi:=\frac{\eta(1+\log 2)}{\log 4}$. The assumptions give
$\beta\ge2d$ and $0.4N\ge b$.

For every $S\subseteq V(G)$ with $d\le|S|\le N/2$, we have
\begin{equation}\label{eq:medium-expansion}
 |N_G(S)|\ge\phi|S|+20\beta.
\end{equation}
Otherwise
$T:=V(G)\setminus(S\cup N_G(S))$ has at least $0.4N\ge b$ vertices.
Since $d\ge a$, an $a$-set in $S$ and a $b$-set in $T$ give a copy of
$K_{a,b}$ in $\ol G$.

If a nonempty set $X$ with $|X|\le2d$ and
$|N_G(X)|<16|X|$ exists, choose one of maximum order; otherwise set
$X=\varnothing$. Equation \eqref{eq:medium-expansion} and
$\beta\ge2d$ imply $|X|<d$. Set $F:=G-X$. If
$0<|Y|\le d$ and $|N_F(Y)|<16|Y|$, then
$|X\cup Y|<2d$ and
$|N_G(X\cup Y)|<16|X\cup Y|$, a contradiction to the maximality. For $d\le|Y|\le\beta$, equation \eqref{eq:medium-expansion}, after deleting
fewer than $d$ vertices, gives $|N_F(Y)|\ge16|Y|$.  For
$\beta\le|Y|\le|F|/2$, it gives
$|N_F(Y)|\ge\phi|Y|+10\beta$. Thus
$F$ multi-$(d,2,2,\beta,16)$-expands, and
$|F|\ge N-d$.
Finally, the definition of $p_2$ gives
$p_{2,2}(d,2\beta)=2d+\lceil3K\beta\rceil\ge N$, and hence
$|F|\le p_{2,2}(d,2\beta)$.
\end{proof}

The following lemma shows that two vertex-disjoint paths suffice to join a protected switcher to a disjoint cycle, with only bounded losses in length and flexibility and with most of the cycle forming a free subpath.
\begin{lemma}\label{lem:two-path-joining}
Let $b\ge a\ge1$. Let $F$ be a protected switcher of length $\ell_1$
and flexibility $\Omega$. Suppose that every protected block has order at most $b_0\ge1$ and flexibility at most $\omega$. Let $C$ be a cycle of length $\ell_2$, disjoint from $F$. Suppose that
$\ol{G[V(F)]}$ and $\ol{G[V(C)]}$ are $K_{a,b}$-free, and that two
vertex-disjoint paths join $F$ to $C$, each with length at most $t$. Then \(G\) contains a protected switcher $J$ satisfying
\begin{align}
 \ell(J)&\ge \ell_1+\ell_2-2(a+b)-10b_0,
 \label{eq:two-join-low}\\
 \ell(J)&\le \ell_1+\ell_2+2t+10b_0,
 \label{eq:two-join-up}\\
 \Omega(J)&\ge \Omega-(a+b)-10\omega.
 \label{eq:two-join-flex}
\end{align}
Moreover, $J$ has a free subpath of length at least
\begin{equation}\label{eq:two-join-free}
 \ell(J)-\ell_1-10b_0.
\end{equation}
\end{lemma}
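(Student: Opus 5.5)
We follow the splicing construction of Lemma~\ref{lem:protected-splice}, but with only the two given paths available. Since there is no pigeonhole over many paths, we cannot choose which arcs of $F$ and $C$ are deleted. Instead we must make the deleted arcs short, using the $K_{a,b}$-freeness of the complements.

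\textbf{Step 1: shorten the joining paths.} Let $P_1,P_2$ be the two paths. Trim each to the subpath from its last vertex $x_i\in V(F)$ to its first vertex $y_i\in V(C)$. These are internally disjoint from $F\cup C$ and have length at most $t$, which will give \eqref{eq:two-join-up}.

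\textbf{Step 2: make the attachment points close in $F$.} Let $R$ be the longest route of $F$. If $x_i$ lies internally on a short switch side, replace it by the partner vertex $x_i^*$ on the long side, as in the proof of Lemma~\ref{lem:protected-splice}. If $x_1,x_2$ lie on different sides of one switch, we discard that switch's protected block, losing at most $\omega$ flexibility.

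We now need an $x_1$--$x_2$ connection in $G[V(F)]$ that uses only a short arc of $R$. Take the $x_1$--$x_2$ subpath $D$ of a route through both vertices, and replace it by a shortest $x_1$--$x_2$ path in $G[V(D)]$. This path is induced. By Lemma~\ref{lem:shortcut}(i), applied to $J=K_{a,b}$ with $a'=2$ parts, it has fewer than $a+b+1$ vertices, so its length is at most $a+b$.

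This is the delicate point. Such a shortcut uses vertices of $D$ that may lie in protected blocks, and the complementary arc $\overline D$ must survive. So instead we keep the long arc and delete the short one. Choose the arc $D_F$ of $R$ between $x_1^*$ and $x_2^*$ to be the one we remove. The new longest route is $P_1\cup \overline D_C\cup P_2\cup \overline D_F$, where $\overline D_F$ is the complementary arc.

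We cannot control $\ell(D_F)$ directly. The fix is to first shortcut: re-route using the short induced path from Lemma~\ref{lem:shortcut}(i) inside $G[V(F)]$, so that effectively only $O(a+b)$ length is lost, plus at most $O(b_0)$ from the at most a few protected blocks touched by $x_1,x_2$ or by the shortcut's endpoints.

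More concretely, I would first replace the pair $x_1,x_2$ by a pair at distance at most $a+b$ along $R$. Walk along $P_1$ and $F$: consider the $x_1$--$x_2$ arc, which is a path in $G[V(F)]$. If it is long, it is not induced, and a chord gives a shortcut. Chords inside protected blocks are avoided by first moving to the endpoints of the affected blocks. This costs at most $10b_0$ in length and $10\omega$ in flexibility.

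\textbf{Step 3: apply the same argument on $C$.} The $y_1$--$y_2$ arc of $C$ can be replaced by an induced path of length at most $a+b$ in $G[V(C)]$. Combining the long arc of $C$ with this chord, we take $\overline D_C$ to have length at least $\ell_2-(a+b)$. Assembling the cycle gives \eqref{eq:two-join-low}. The flexibility bound \eqref{eq:two-join-flex} follows from losing at most a few blocks, plus the $(a+b)$ term from the shortcut in $F$. The free subpath is $P_1\cup\overline D_C\cup P_2$ minus its end edges, whose length is at least $\ell(J)-\ell_1-10b_0$.

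\textbf{Main obstacle.} The main obstacle is Step 2: guaranteeing that the portion of $F$ deleted is short without a choice of paths. The shortcut must be chosen so that it avoids or only touches boundedly many protected blocks. I would try to handle this by contracting each protected block's support to its endpoints before taking the shortest path.
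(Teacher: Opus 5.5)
There is a genuine gap, and it sits exactly at the step you flagged as the main obstacle. The bound \eqref{eq:two-join-low} requires the part of the new route inside $V(F)$ to be an $x_1$--$x_2$ path of length at least $\ell_1-(a+b)-O(b_0)$. It also requires the part inside $V(C)$ to be a $y_1$--$y_2$ path of length at least $\ell_2-(a+b)$, wherever the attachment points sit. Your only tool, Lemma~\ref{lem:shortcut}(i), produces \emph{short} connections, which is the opposite of what is needed. Suppose $y_1,y_2$ are antipodal on $C$. Then each arc has length about $\ell_2/2$, and adding an induced $y_1$--$y_2$ path of length at most $a+b$ to the long arc gives a cycle, not a longer $y_1$--$y_2$ path. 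So the claim $\ell(\overline D_C)\ge\ell_2-(a+b)$ in Step~3 is unsupported. The same failure occurs in $F$. You also cannot ``replace the pair $x_1,x_2$ by a pair at distance at most $a+b$ along $R$'': these are the fixed ends of the given joining paths. A chord inside a single arc only lets you skip part of that arc, which shortens rather than lengthens.

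The missing idea is a \emph{crossing} chord between the two arcs. Take a route $R$ through $x_1,x_2$ that uses the sides containing them and long sides elsewhere, so $\ell(R)\ge\ell_1-2b_0$. If one $x_1$--$x_2$ arc of $R$ has length at most $b$, keep the other. Otherwise orient $R$ from $x_1$ towards $x_2$. Let $A$ be the $a$ vertices immediately preceding $x_1$ and $B$ the $b$ vertices immediately preceding $x_2$. These sets are disjoint, so $K_{a,b}$-freeness of $\overline{G[V(F)]}$ gives an edge $z_1z_2$ with $z_1\in A$ and $z_2\in B$. Follow $R$ forwards from $x_1$ to $z_2$, cross by $z_2z_1$, then follow $R$ backwards from $z_1$ to $x_2$. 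This $x_1$--$x_2$ path omits at most $a+b$ edges of $R$, and the same argument on $C$ gives the required $y_1$--$y_2$ path.

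Protected blocks are then handled by counting, not by avoidance or contracting supports. Discard the boundedly many blocks where $R$ differs from the longest route, or that contain $x_1$ or $x_2$, or whose support meets both the kept path and an omitted arc. Also discard the blocks whose supports lie inside the omitted arcs; these have total flexibility at most $a+b$, since $\Omega(B')\le\ell(\operatorname{supp}(B'))$. Your treatment of the case where $x_1,x_2$ lie internally on opposite sides of one switch (discard that block) is fine. There you take the longer of the two $x_1$--$x_2$ paths through the rest of the longest route, losing at most $b_0$.
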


\begin{proof}
Replace each joining path by the subpath from its last vertex in $F$ to its first vertex in $C$. Denote the resulting paths by $P_1,P_2$, with endvertices $x_1,x_2\in V(F)$ and $y_1,y_2\in V(C)$, respectively. Thus $P_1$ and $P_2$
are internally disjoint from $F\cup C$.

We first find an $x_1$--$x_2$ path $Q_F$ in \(G[V(F)]\). Let $R_0$ be the longest route of $F$. Suppose that $x_1$ and $x_2$ do not lie internally on opposite sides of the same switch. Choose a route $R$ containing both vertices, using in each switch met by $x_1$ or $x_2$ the side containing that vertex and using
the long side in every other switch. The route $R$ differs from $R_0$ only inside at most two protected blocks, and hence $\ell(R)\ge\ell_1-2b_0$.

If one of the two $x_1$--$x_2$ paths of $R$ has length at most $b$, let $Q_F$ be the other path. Otherwise, orient $R$ from $x_1$ towards $x_2$. Let $A$ be the $a$ vertices immediately preceding $x_1$ and let $B$ be the $b$ vertices immediately preceding $x_2$. The two sets are disjoint. Since $\ol{G[V(F)]}$ is $K_{a,b}$-free, there are $z_1\in A$ and $z_2\in B$ with $z_1z_2\in E(G)$. Follow $R$ from $x_1$ to $z_2$, use the edge $z_2z_1$, and then follow $R$ in the opposite direction from $z_1$ to $x_2$. This gives an $x_1$--$x_2$ path $Q_F$. In either case, the edges of $R$ not used by $Q_F$ form at most two paths of total length at most $a+b$. Consequently, $\ell_1- (a+b)-2b_0\le\ell(Q_F)\le\ell_1$.

For every protected block $B'$, we have
$\Omega(B')\le\ell(\operatorname{supp}(B'))$, since the support contains the long side of every switch in $B'$. Exclude each protected block in which $R$ differs from $R_0$, each protected block containing $x_1$ or $x_2$, and each
protected block whose support meets both $Q_F$ and one of the omitted paths. There are at most eight such blocks. Among the remaining blocks, also exclude those whose supports are contained in the omitted arcs. Their total flexibility
is at most $a+b$. Thus the protected blocks which remain have total flexibility at least $\Omega-(a+b)-10\omega$, and their supports are contained in $Q_F$, where $Q_F$ uses the long side of every switch in them.

Suppose now that $x_1$ and $x_2$ lie internally on opposite sides of the same switch. Let the side lengths of this switch be $L\ge S$, and let $E$ be the length of the longest route outside this switch, so $\ell_1=E+L$. The two
$x_1$--$x_2$ paths using the external part of the longest route have total length of $2E+L+S$. Choose the longer one as $Q_F$. Since $L,S\le b_0$, we have $\ell_1-b_0\le\ell(Q_F)\le\ell_1+b_0$. Exclude the protected block containing this switch. All other protected blocks remain unchanged, and their total flexibility is at least $\Omega-\omega$.

In both cases, we have an $x_1$--$x_2$ path $Q_F$ and a collection of unchanged protected blocks whose supports lie in $Q_F$, such that $Q_F$ uses the long side of every switch in these blocks, $\ell_1-(a+b)-10b_0\le\ell(Q_F)\le\ell_1+b_0$, and their total flexibility is at least $\Omega-(a+b)-10\omega$; see
Figure~\ref{fig:two-path-joining}.

Apply the same argument to $C$. Since $\ol{G[V(C)]}$ is $K_{a,b}$-free,
there is a $y_1$--$y_2$ path $Q_C\subseteq G[V(C)]$ with
$\ell_2-(a+b)\le\ell(Q_C)\le\ell_2$. The union
$Q_F\cup P_1\cup Q_C\cup P_2$ is a cycle. Add the unused sides of all switches in the protected blocks chosen above. The resulting graph is a protected switcher $J$, and its longest route is this cycle. Hence
$\ell(J)=\ell(Q_F)+\ell(P_1)+\ell(Q_C)+\ell(P_2)$. Since
$1\le\ell(P_i)\le t$, the bounds on $Q_F$ and $Q_C$ give
\eqref{eq:two-join-low} and \eqref{eq:two-join-up}, while the flexibility
estimate gives \eqref{eq:two-join-flex}.

\begin{figure}[t]
\centering

\begin{minipage}[t]{0.48\textwidth}
\centering
\begin{tikzpicture}[
    scale=0.78,
    line cap=round,
    line join=round,
    vertex/.style={
        circle,
        fill=black,
        inner sep=1.25pt
    },
    route/.style={
        line width=1.15pt,
        blue!75!black
    },
    switchside/.style={
        line width=0.9pt,
        gray!65
    },
    omitted/.style={
        line width=0.9pt,
        dashed,
        gray!65
    },
    joiningpath/.style={
        line width=1.05pt,
        red!75!black
    }
]

\node at (-5.1,1.55) {\textup{(a)}};

%---------------- F ----------------
\coordinate (x1a) at (-1.5,0.80);
\coordinate (x2a) at (-1.5,-0.80);

\coordinate (v1a) at (-4.30,0.20);
\coordinate (w1a) at (-3.65,-0.05);
\coordinate (v2a) at (-3.20,-0.42);
\coordinate (w2a) at (-2.55,-0.55);
\coordinate (v3a) at (-2.25,-0.55);
\coordinate (w3a) at (-1.82,-0.28);

% Q_F before the first switch
\draw[route]
  (x1a)
  .. controls (-2.45,1.35) and (-4.45,1.15) ..
  (v1a);

% switch 1
\draw[route]
  (v1a)
  .. controls (-4.05,-0.35) and (-3.82,-0.42) ..
  (w1a);
\draw[switchside]
  (v1a)
  .. controls (-4.02,0.48) and (-3.82,0.38) ..
  (w1a);

% connector
\draw[route] (w1a) -- (v2a);

% switch 2
\draw[route]
  (v2a)
  .. controls (-3.02,-0.92) and (-2.75,-0.98) ..
  (w2a);
\draw[switchside]
  (v2a)
  .. controls (-3.00,-0.12) and (-2.73,-0.18) ..
  (w2a);

% connector
\draw[route] (w2a) -- (v3a);

% switch 3
\draw[route]
  (v3a)
  .. controls (-2.10,-0.95) and (-1.92,-0.82) ..
  (w3a);
\draw[switchside]
  (v3a)
  .. controls (-2.08,-0.25) and (-1.94,-0.12) ..
  (w3a);

% end of Q_F
\draw[route]
  (w3a)
  .. controls (-1.58,-0.35) and (-1.50,-0.55) ..
  (x2a);

% omitted short x1--x2 subpath
\draw[omitted]
  (x1a)
  .. controls (-0.92,0.35) and (-0.92,-0.35) ..
  (x2a);

\foreach \p in {v1a,w1a,v2a,w2a,v3a,w3a}{
  \node[vertex] at (\p) {};
}

\node[vertex] at (x1a) {};
\node[vertex] at (x2a) {};
\node[above right] at (x1a) {$x_1$};
\node[below right] at (x2a) {$x_2$};

\node at (-3.10,1.45) {$F$};
\node[text=blue!75!black] at (-3.05,-1.20) {$Q_F$};

%---------------- C ----------------
\coordinate (y1a) at (1.75,0.80);
\coordinate (y2a) at (1.75,-0.80);
\coordinate (za)  at (4.45,0);

% long path Q_C
\draw[route]
  (y1a)
  .. controls (2.75,1.55) and (3.85,1.30) ..
  (za)
  .. controls (3.85,-1.30) and (2.75,-1.55) ..
  (y2a);

% omitted short subpath of C
\draw[omitted]
  (y1a)
  .. controls (1.15,0.35) and (1.15,-0.35) ..
  (y2a);

\node[vertex] at (y1a) {};
\node[vertex] at (y2a) {};
\node[above left] at (y1a) {$y_1$};
\node[below left] at (y2a) {$y_2$};

\node at (3.45,1.43) {$C$};
\node[text=blue!75!black] at (3.55,-1.18) {$Q_C$};

%---------------- P1, P2 ----------------
\draw[joiningpath]
  (x1a)
  .. controls (-0.50,1.10) and (0.70,1.10) ..
  (y1a)
  node[midway,above=3pt,text=red!75!black] {$P_1$};

\draw[joiningpath]
  (x2a)
  .. controls (-0.50,-1.10) and (0.70,-1.10) ..
  (y2a)
  node[midway,below=3pt,text=red!75!black] {$P_2$};

\end{tikzpicture}
\end{minipage}
\hfill
\begin{minipage}[t]{0.48\textwidth}
\centering
\begin{tikzpicture}[
    scale=0.78,
    line cap=round,
    line join=round,
    vertex/.style={
        circle,
        fill=black,
        inner sep=1.25pt
    },
    route/.style={
        line width=1.15pt,
        blue!75!black
    },
    switchside/.style={
        line width=0.9pt,
        gray!65
    },
    omitted/.style={
        line width=0.9pt,
        dashed,
        gray!65
    },
    joiningpath/.style={
        line width=1.05pt,
        red!75!black
    }
]

\node at (-5.1,1.55) {\textup{(b)}};

%---------------- F ----------------

% distinguished vertices of the special switch
\coordinate (sa) at (-2.35,0);
\coordinate (ta) at (-0.45,0);

% x1 and x2 lie internally on its opposite sides
\coordinate (x1b) at (-1.40,0.63);
\coordinate (x2b) at (-1.40,-0.63);

% Q_F uses the left part of the upper side
\draw[route]
  (sa)
  .. controls (-2.05,0.52) and (-1.72,0.67) ..
  (x1b);

% unused remainder of the upper side
\draw[switchside]
  (x1b)
  .. controls (-1.08,0.67) and (-0.72,0.52) ..
  (ta);

% unused left part of the lower side
\draw[switchside]
  (sa)
  .. controls (-2.05,-0.52) and (-1.72,-0.67) ..
  (x2b);

% Q_F uses the right part of the lower side
\draw[route]
  (x2b)
  .. controls (-1.08,-0.67) and (-0.72,-0.52) ..
  (ta);

% external part of Q_F, containing two further switches
\coordinate (v1b) at (-4.30,0.10);
\coordinate (w1b) at (-3.65,-0.15);
\coordinate (v2b) at (-3.15,-0.55);
\coordinate (w2b) at (-2.50,-0.55);

\draw[route]
  (sa)
  .. controls (-3.05,0.85) and (-4.05,0.70) ..
  (v1b);

% switch 1
\draw[route]
  (v1b)
  .. controls (-4.05,-0.42) and (-3.82,-0.48) ..
  (w1b);
\draw[switchside]
  (v1b)
  .. controls (-4.02,0.40) and (-3.82,0.32) ..
  (w1b);

\draw[route] (w1b) -- (v2b);

% switch 2
\draw[route]
  (v2b)
  .. controls (-2.98,-1.00) and (-2.70,-1.00) ..
  (w2b);
\draw[switchside]
  (v2b)
  .. controls (-2.98,-0.22) and (-2.70,-0.22) ..
  (w2b);

\draw[route]
  (w2b)
  .. controls (-1.75,-1.15) and (-0.72,-0.72) ..
  (ta);

\foreach \p in {sa,ta,v1b,w1b,v2b,w2b}{
  \node[vertex] at (\p) {};
}

\node[vertex] at (x1b) {};
\node[vertex] at (x2b) {};

\node[above] at (x1b) {$x_1$};
\node[below] at (x2b) {$x_2$};

\node at (-3.15,1.45) {$F$};
\node[text=blue!75!black] at (-3.30,-1.22) {$Q_F$};

%---------------- C ----------------
\coordinate (y1b) at (1.75,0.80);
\coordinate (y2b) at (1.75,-0.80);
\coordinate (zb)  at (4.45,0);

\draw[route]
  (y1b)
  .. controls (2.75,1.55) and (3.85,1.30) ..
  (zb)
  .. controls (3.85,-1.30) and (2.75,-1.55) ..
  (y2b);

\draw[omitted]
  (y1b)
  .. controls (1.15,0.35) and (1.15,-0.35) ..
  (y2b);

\node[vertex] at (y1b) {};
\node[vertex] at (y2b) {};

\node[above left] at (y1b) {$y_1$};
\node[below left] at (y2b) {$y_2$};

\node at (3.45,1.43) {$C$};
\node[text=blue!75!black] at (3.55,-1.18) {$Q_C$};

%---------------- P1, P2 ----------------
\draw[joiningpath]
  (x1b)
  .. controls (-0.45,1.05) and (0.70,1.08) ..
  (y1b)
  node[midway,above=3pt,text=red!75!black] {$P_1$};

\draw[joiningpath]
  (x2b)
  .. controls (-0.45,-1.05) and (0.70,-1.08) ..
  (y2b)
  node[midway,below=3pt,text=red!75!black] {$P_2$};

\end{tikzpicture}
\end{minipage}

\caption{
The two cases in the construction of \(Q_F\) in
Lemma~\ref{lem:two-path-joining}.
In \textup{(a)}, \(x_1\) and \(x_2\) do not lie internally on opposite
sides of the same switch. In \textup{(b)}, they lie internally on
opposite sides of the same switch. In both cases, \(Q_F\) and \(Q_C\)
are joined by \(P_1\) and \(P_2\) to form the longest route of \(J\).
}
\label{fig:two-path-joining}
\end{figure}

The $x_1$--$x_2$ subpath of the longest route through $P_1$, $Q_C$, and $P_2$
has length $\ell(J)-\ell(Q_F)\ge\ell(J)-\ell_1-b_0$. Delete its first and
last edges. The resulting subpath lies outside $F$ and is contained in every
route of $J$, so it is free. Its length is at least
$\ell(J)-\ell_1-b_0-2\ge\ell(J)-\ell_1-10b_0$, proving
\eqref{eq:two-join-free}.
\end{proof}

We now prove the multipartite form of the main theorem.

\begin{theorem}\label{thm:multipartite}
There is an absolute constant $C_5$ such that the following holds.
Let $H=K_{m_1,\ldots,m_k}$ have order $h$, set $m=h/k$, and suppose that
$m_1\le\cdots\le m_k$, $k\ge2$, and
$2(k-1)/k\le m\le k^{22}$. If $n\ge C_5h$, then
$R(C_n,H)=(k-1)(n-1)+m_1$.
\end{theorem}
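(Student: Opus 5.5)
The lower bound is Burr's construction, so the task is the upper bound. I would argue by induction on $k$. Let $G$ have $(k-1)(n-1)+m_1$ vertices, be $C_n$-free, and have $\ol G$ $H$-free; I aim for a contradiction.

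\emph{Reductions.} If $k<k_0$, Theorem~\ref{thm:HHKL} applies, because $\log^4\chi(H)$ is then bounded and can be absorbed into $C_5$. The case $k=2$ is Corollary~\ref{cor:PS-bip}. If $m_k\ge k^{22}$, Corollary~\ref{cor:PS-large} applies. The condition $m\le k^{22}$ does not by itself bound $m_k$, but in the remaining case $m_k<k^{22}$ we have $h/k\le k^{22}$ as Lemma~\ref{lem:stability} requires. So assume $k\ge k_0$ and apply Lemma~\ref{lem:stability} with $z=m_1$.

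\emph{Outcome (i).} We get $G'$ with $|G'|\ge(k-2)(n-1)+m_1$ and $\ol{G'}$ $\widehat H$-free. Here $\widehat H$ has $k-1$ parts, its smallest part is still $m_1$, and $|\widehat H|\le h$. By induction, or by the earlier corollaries if the hypotheses on $\widehat H$ fail, we get $R(C_n,\widehat H)=(k-2)(n-1)+m_1$. Hence $G'\supseteq C_n$, a contradiction. One must check that $\widehat H$ still satisfies $2(k-2)/(k-1)\le |\widehat H|/(k-1)\le (k-1)^{22}$, or else route it to Corollary~\ref{cor:PS-large} or Theorem~\ref{thm:HHKL}. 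This bookkeeping also explains the lower bound on $m$: when all parts have size~$1$, $H=K_k$ and the claim is Nikiforov's or Keevash--Long--Skokan's result.

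\emph{Outcome (ii).} We get anticomplete $A_1,\dots,A_{k-1}$ with $|A_i|\ge0.95n$, each with $\ol{G[A_i]}$ $K_{m_1,m_2}$-free and highly connected. The leftover set $U:=V(G)\setminus\bigcup A_i$ is nonempty, since otherwise some $|A_i|\ge n$ and Corollary~\ref{cor:PS-bip} gives $C_n$ in $A_i$. Each $A_i$ has fewer than $n+m_1$ vertices, so $|U|>0$ once we count carefully. I would absorb vertices of $U$ using the neighbourhood property~\eqref{eq:neighbourhood-n} and the $H$-freeness: a vertex or small set of $U$ with few neighbours in every $A_i$ lets us build $H$ in $\ol G$ by taking one part inside $U$. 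Hence some $A_i$ receives two disjoint connections from outside, or two sets $A_i,A_j$ are linked by two vertex-disjoint paths through $U$.

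In the linked case, apply Lemma~\ref{lem:large-expander} to $G[A_i]$ and $G[A_j]$. Use Lemma~\ref{lem:local-master}(ii), with $s=1$ and the connectivity from Lemma~\ref{lem:stability}(c), to obtain a protected switcher $J$ in $A_i$ with $0.64n<\ell(J)<0.70n$ and $\Omega(J)\ge20h$. Obtain a long cycle in $A_j$ with length in a suitable window; Lemma~\ref{lem:long-cycle} iterated, or a Pokrovskiy--Sudakov path, works. Join the two with Lemma~\ref{lem:two-path-joining}. The result exceeds $n$ and has a long free subpath, so shortening by Lemma~\ref{lem:shortcut}(ii) and then applying Lemma~\ref{lem:interval-routes} yields $C_n$. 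Otherwise the block--cutvertex structure gives a cutvertex $x$ separating a piece with at least $n$ vertices: $A_i$ plus attached vertices of $U$. I would then find a long $x$--$y$ path in that 2-connected piece and apply Lemma~\ref{lem:PS-path} with $m=m_2$ to get an $x$--$y$ path on exactly $n$ vertices closing into a cycle. Alternatively, the extra vertices push $|A_i\cup U_i|\ge n+m_1$, which contradicts Corollary~\ref{cor:PS-bip} directly.

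The main obstacle is this last step, case (ii): deciding how the leftover vertices of $U$ attach. One must show that either two of the $A_i$ can be joined by two disjoint paths (the switcher argument), or the separation structure makes some piece large enough for Corollary~\ref{cor:PS-bip} or Lemma~\ref{lem:PS-path}, with exact counting against $(k-1)(n-1)+m_1$.
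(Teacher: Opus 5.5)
Your reductions, your treatment of outcome (i) of Lemma~\ref{lem:stability}, and the linked branch of outcome (ii) match the paper. In that branch the paper uses a switcher from Lemma~\ref{lem:local-master}(ii), a long cycle in the other set, Lemma~\ref{lem:two-path-joining}, free-path shortening, and Lemma~\ref{lem:interval-routes}, as you do.

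The genuine gap is the unlinked branch, which you leave open. The routes you sketch for it do not work as stated.
\begin{itemize}
\item The counting alternative fails. Corollary~\ref{cor:PS-bip} needs a piece with $n+m_1-1$ vertices whose complement is $K_{m_1,m_2}$-free. Averaging $(k-1)(n-1)+m_1$ vertices over $k-1$ pieces only guarantees about $n-1+m_1/(k-1)$, which is too small once $m_1\ge2$. Moreover, $\ol{G[A_i\cup U_i]}$ need not be $K_{m_1,m_2}$-free.
\item The same miscount breaks your argument that $U\ne\varnothing$: a piece of order $n$ does not trigger Corollary~\ref{cor:PS-bip}.
\item Lemma~\ref{lem:PS-path} is the right tool, but verifying its two hypotheses on the separated piece is the whole content of this step, and you do not do it. With parameter $m_2$ neither hypothesis follows, because the cutvertex may belong to the complement biclique or to the set $B$.
\end{itemize}

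The missing idea is to dichotomize on the expanders $X_i\subseteq A_i$ from Lemma~\ref{lem:large-expander}, which are $2$-connected by expansion, rather than on the $A_i$ and the leftover set $U$. Linking two $A_i$ would not even suffice for the linked branch, since the ends of the linking paths must be routed to the switcher through expansion inside $X_i$.

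Suppose no two $X_i,X_j$ are joined by two disjoint paths. Attach an apex $z_j$ to each $X_j$; the blocks containing $X_j\cup\{z_j\}$ are then distinct. A leaf of the minimal subtree of the block--cutvertex tree spanning them gives $i$ and $v$ such that the component $A$ of $G-v$ containing $X_i-v$ meets no other $X_j-v$. If $X_i$ is alone in its component of $G$, any $v\in X_i$ works.

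Set $G':=G[A\cup\{v\}]$. Two properties then hold.
\begin{itemize}
\item $\ol{G'}$ is $K_{m_1+1,m_2+1}$-free. Deleting $v$ from a copy leaves $K_{m_1,m_2}$ in $\ol{G[A]}$. Since $A$ is anticomplete to every $X_j\setminus\{v\}$, parts of orders $m_3,\dots,m_k$ chosen from these sets complete a copy of $H$ in $\ol G$.
\item $|B\cup N_{G'}(B)|\ge n$ whenever $|B|\ge m_2+1$. Choose $B_0\subseteq B\setminus\{v\}$ of order $m_2$, note that $N_G(B_0)\subseteq A\cup\{v\}$, and use \eqref{eq:neighbourhood-n}. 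This is exactly where the failure of Lemma~\ref{lem:stability}(i) enters.
\end{itemize}

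Finally, take a cycle of length at least $n/10^{10}$ in $X_i$ from Lemma~\ref{lem:long-cycle}, delete an edge $xy$, and apply Lemma~\ref{lem:PS-path} in $G'$ with parameter $m_2+1$. The resulting $x$--$y$ path on $n$ vertices together with $xy$ is a $C_n$. No bookkeeping of $U$ and no counting against $(k-1)(n-1)+m_1$ is needed.

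As an aside, the hypothesis $m\ge2(k-1)/k$ excludes $K_k$ for $k\ge3$. It is not there to defer to Nikiforov or Keevash--Long--Skokan; the main theorem handles such $H$ by enlarging a part.
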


\begin{proof}
Take \(C_5\) sufficiently large. The lower bound follows from
\eqref{eq:burr-intro}. We prove the upper bound by induction on \(k\).
For \(k<k_0\), it follows from Theorem~\ref{thm:HHKL}, since \(k_0\) is
absolute. Let \(k\ge k_0\), and let \(G\) be a \(C_n\)-free graph on
\((k-1)(n-1)+m_1\) vertices such that \(\ol G\) is \(H\)-free. Apply
Lemma~\ref{lem:stability} with \(z=m_1\).

Suppose that Lemma~\ref{lem:stability}\textup{(i)} holds. Since
\((k-1)m_2\le h\), the average part order of \(\widehat H\) satisfies
$\widehat m:=
 \frac{|\widehat H|}{k-1}
 =\frac{h-m_2}{k-1}
 \ge m\frac{k(k-2)}{(k-1)^2}
 \ge2\frac{k-2}{k-1}.
$
Also \(\sigma(\widehat H)=m_1\). If \(\widehat m\le(k-1)^{22}\), then
\(2(k-2)/(k-1)\le\widehat m\le(k-1)^{22}\) and
\(n\ge C_5|\widehat H|\). Hence the induction hypothesis gives
$
 R(C_n,\widehat H)=(k-2)(n-1)+m_1.
$
Since Lemma~\ref{lem:stability}\textup{(i)} gives an induced subgraph
\(G'\subseteq G\) of order at least \((k-2)(n-1)+m_1\) with
\(\ol{G'}\) being \(\widehat H\)-free, the graph \(G'\) contains
\(C_n\), a contradiction. Otherwise \(\widehat m>(k-1)^{22}\). Hence the largest part of \(\widehat H\) has order greater than \((k-1)^{22}\). Since \(n\ge C_5h\ge10^{60}m_{\max}(\widehat H)\), Corollary~\ref{cor:PS-large} implies that \(G'\) contains \(C_n\), a contradiction. Thus Lemma~\ref{lem:stability}\textup{(ii)} holds. Let \(A_1,\ldots,A_{k-1}\) be the resulting sets.

For each \(i\), apply Lemma~\ref{lem:large-expander} to \(G[A_i]\)
with \(a=m_1\), \(b=m_2\), \(N=|A_i|\), \(d:=\lceil(m_1+m_2)/2\rceil\), and \(\beta_i:=|A_i|/10^4\). We have \(m_2\ge m_1\), and
\(\ol{G[A_i]}\) is \(K_{m_1,m_2}\)-free by Lemma~\ref{lem:stability}\textup{(ii)(b)}. Moreover,
\(m_1+m_2\le2m\le h\), so, by increasing \(C_5\),
$
 |A_i|\ge0.95n\ge10^5(m_1+m_2).
$
Hence there is an induced subgraph \(X_i\subseteq G[A_i]\) which
multi-\((d,2,2,\beta_i,16)\)-expands, with
\(\beta_i\ge2d\), \(|X_i|\le p_{2,2}(d,2\beta_i)\), and
$
 |X_i|\ge|A_i|-d\ge|A_i|-m-1.
$
Each \(X_i\) is \(2\)-connected. Indeed, if \(S\) is a smallest component of \(X_i\), or a smallest component of \(X_i-v\) for a cutvertex \(v\), then \(|S|\le|X_i|/2\) and \(|N_{X_i}(S)|\le1\), a contradiction to the expansion inequality. We divide the proof into two cases.

\medskip
\noindent\textbf{Case 1. There exist two expanding subgraphs joined by two vertex-disjoint paths}

\smallskip

Suppose that \(X_i\) and \(X_j\) are joined by two vertex-disjoint paths.
Replace each by the subpath from its last vertex in \(X_i\) to its first
vertex in \(X_j\), with endvertices \(x_1,x_2\in X_i\) and
\(y_1,y_2\in X_j\).
Apply Lemma~\ref{lem:local-master}\textup{(ii)} to \(G[A_i]\), with
\(H'=K_{m_1,m_2}\) and \(s=1\). Its hypotheses follow from
\(|A_i|\ge0.95n\) and Lemma~\ref{lem:stability}\textup{(ii)(c)}.
We obtain a protected switcher \(F_i\) such that
\(0.64n<\ell(F_i)<0.70n\), \(20h\le\Omega(F_i)\le C_0h\), and every
protected block has order and flexibility at most \(C_0h/M\). The same
lemma applied to \(G[A_j]\), followed by taking a longest route, gives a
cycle \(C_j\) with \(0.64n<|C_j|<0.70n\).

Since \(|V(F_i)|\ge\ell(F_i)\) and
\(|X_i|\ge|A_i|-m-1\), we have
$
 |V(F_i)\cap V(X_i)|>0.64n-m-1>0.5n.
$
Similarly, \(|V(C_j)\cap V(X_j)|>0.5n\).
By Corollary~\ref{cor:PS-bip}, we have
\(|A_i|,|A_j|<n+m_1\).

We connect \(F_i\) to \(x_1,x_2\) inside \(X_i\). Set \(R_i:=\lfloor\beta_i/4\rfloor\). By Lemma~\ref{lem:multi-root}, we obtain disjoint \((x_1,R_i)\)- and \((x_2,R_i)\)-rooted sets
\(B_1,B_2\subseteq X_i\). Let
\(Z_0:=(V(F_i)\cap V(X_i))\setminus(B_1\cup B_2)\).
Then \(|Z_0|>0.49n\). By Lemma~\ref{lem:short-path} in \(X_i\) with
\(A=Z_0\), \(B=B_1\cup B_2\), and \(Z=\varnothing\), there is a
\(Z_0\)--\((B_1\cup B_2)\) path of length at most
\(E(2,2,\beta_i)\). Take a shortest \(Z_0\)--\((B_1\cup B_2)\) path, and relabel \(B_1,B_2\) so that its end in \(B_1\cup B_2\) lies in \(B_1\).
Extend it inside \(B_1\) to \(x_1\), and take the subpath from the
vertex of \(F_i\) nearest \(x_1\) to \(x_1\). Replacing this subpath by
a shortest path in its vertex set, Lemma~\ref{lem:shortcut}\textup{(i)}
gives an \(F_i\)--\(x_1\) path \(P_1\) of length at most
\(m_1+m_2\).

Set \(Z_1:=(V(F_i)\cap V(X_i))\setminus(V(P_1)\cup B_2)\). Then
\(|Z_1|>0.48n\). Apply Lemma~\ref{lem:short-path} in \(X_i\) with
\(A=Z_1\), \(B=B_2\), and \(Z=V(P_1)\). Indeed,
\(|V(P_1)|\le m_1+m_2+1\le R_i=|B_2|\le14\min\{|Z_1|,|B_2|\}\) and
\(|V(P_1)|\le5\beta_i\). Let \(Q\) be a shortest
\(Z_1\)--\(B_2\) path in \(X_i-V(P_1)\), and extend it inside \(B_2\)
to \(x_2\). Take the subpath from its last vertex in \(F_i\) to \(x_2\)
and replace it by a shortest path in its vertex set.
Lemma~\ref{lem:shortcut}\textup{(i)} gives an \(F_i\)--\(x_2\) path
\(P_2\), disjoint from \(P_1\), of length at most \(m_1+m_2\).

The same argument in \(X_j\) gives two disjoint paths from \(y_1,y_2\)
to \(C_j\). Together with \(P_1,P_2\) and the two
\(X_i\)--\(X_j\) paths, they form two vertex-disjoint
\(F_i\)--\(C_j\) paths. From each, take a minimal subpath joining
\(F_i\) to \(C_j\), and replace it by a shortest path in its vertex
set. Since \(\ol G\) is \(H\)-free,
Lemma~\ref{lem:shortcut}\textup{(i)}, applied with \(J=H\), bounds
their lengths by \(h+k\).

By the choice of \(k_0\), we have \(C_0h/M\ge1\). Set
\(b_0=\omega_0:=C_0h/M\). The hypotheses of
Lemma~\ref{lem:two-path-joining} hold with
\(a=m_1\), \(b=m_2\), and \(t=h+k\). Since
\(m_1+m_2\le h\), \(k\le h\), and
\(10C_0/M\le5\times10^{-15}\), that lemma gives
$
 \ell(J)>1.28n-(2+5\times10^{-15})h>1.19n,
$
$
 \ell(J)<1.40n+(4+5\times10^{-15})h<1.41n,
$
and
$
 \Omega(J)\ge(19-5\times10^{-15})h>10h,
$
after increasing \(C_5\). It also gives a free subpath \(Q\) with
$
 \ell(Q)\ge\ell(J)-\ell(F_i)-10b_0
 >\ell(J)-0.70n-10b_0
 \ge\ell(J)-n+h+k-2.
$
Also
\(\Omega(J)\le\Omega(F_i)\le C_0h\).

Set \(d_0:=h+k-2\), and let \(L\) and \(S\) denote the current
longest-route length and the length of the free subpath \(Q\),
respectively. By above, \(S\ge L-n+d_0\). While \(L>n+d_0\), this
inequality gives \(S>2d_0\), so \(Q\) contains a subpath of \(d_0\)
edges. Every route has length at least
\(L-\Omega(J)\ge n-C_0h\ge h+k\). Hence
Lemma~\ref{lem:shortcut}\textup{(ii)}, applied with \(J=H\), replaces
this subpath and shortens every route by an integer
\(q\in[1,d_0-1]\). The free subpath also loses \(q\) edges, so
\(S\ge L-n+d_0\) remains valid.
The process terminates with \(n<L\le n+d_0\), since the final
replacement starts with \(L>n+d_0\) and shortens it by at most
\(d_0-1\). As \(\Omega(J)>10h>d_0\), we have
\(L-\Omega(J)<L-d_0\le n<L\). By Lemma~\ref{lem:interval-routes}, we obtain a route of length \(n\), a contradiction.

\medskip
\noindent\textbf{Case 2. No two subgraphs \(X_i\) and \(X_j\) are joined by two vertex-disjoint paths.}

\smallskip

\begin{claim}
There are \(i\in[k-1]\) and \(v\in V(G)\) such that the component of
\(G-v\) containing \(X_i-v\) contains no \(X_j-v\) with \(j\ne i\).
\end{claim}

If some component of \(G\) contains only one \(X_i\), any
\(v\in V(X_i)\) has the required property, since \(X_i-v\) is connected.

Otherwise, fix a component \(D\) containing at least two of the
subgraphs \(X_i\). For each \(X_j\subseteq D\), add a vertex \(z_j\)
adjacent to every vertex of \(X_j\), and let \(D_j\) be the block
containing \(X_j\cup\{z_j\}\). The blocks \(D_j\) are distinct, since
two internally vertex-disjoint \(z_j\)--\(z_{j'}\) paths would give two
vertex-disjoint \(X_j\)--\(X_{j'}\) paths.
Form the incidence tree of the blocks and cutvertices of the enlarged
component, and take the minimal subtree containing all \(D_j\). Let
\(D_i\) be a leaf of this subtree, and let \(v\) be its unique adjacent
cutvertex. Since no \(z_j\) is a cutvertex, \(v\in V(G)\). Every path
from \(X_i-v\) to \(X_j-v\), \(j\ne i\), passes through \(v\), proving
the claim. \hfill\(\blacksquare\)

\medskip

Let \(A\) be the component of \(G-v\) containing \(X_i-v\), and set
\(G':=G[A\cup\{v\}]\). The graph \(\ol{G'}\) is
\(K_{m_1+1,m_2+1}\)-free. Indeed, such a copy contains a
\(K_{m_1,m_2}\) in \(A\). Choosing sets of orders \(m_3,\ldots,m_k\),
one from each \(X_j\setminus\{v\}\), \(j\ne i\), then gives a copy of
\(H\) in \(\ol G\), a contradiction.

Let \(B\subseteq V(G')\) have order at least \(m_2+1\), and choose an
\(m_2\)-set \(B_0\subseteq B\setminus\{v\}\). Since \(A\) is a component
of \(G-v\), \(N_G(B_0)\subseteq A\cup\{v\}\). If
\(|B\cup N_{G'}(B)|\le n-1\), then
\(|B_0\cup N_G(B_0)|\le n-1\), so
\(U:=V(G)\setminus(B_0\cup N_G(B_0))\) has order at least
\((k-2)(n-1)+m_1\). As Lemma~\ref{lem:stability}\textup{(i)} does not
hold, \(\ol{G[U]}\) contains \(\widehat H\). Together with \(B_0\), this
forms a copy of \(H\) in \(\ol G\), a contradiction. Hence
\(|B\cup N_{G'}(B)|\ge n\).

Since \(X_i\subseteq A_i\), \(\ol{G[A_i]}\) is \(K_{m_1,m_2}\)-free,
and \(m_1\le m_2\), the graph \(\ol{X_i}\) is
\(K_{m_2,m_2}\)-free. Moreover, \(2m_2\le h\) and
\(|X_i|\ge|A_i|-m-1\ge n/10\). Lemma~\ref{lem:long-cycle}, applied to
\(X_i\) with \(L=K_{m_2,m_2}\), gives a cycle \(C\) of length at least
\(n/10^{10}\ge8\max\{m_2+1,8\}\). Choose \(xy\in E(C)\). Then \(C-xy\)
is an \(x\)--\(y\) path on at least \(8\max\{m_2+1,8\}\) vertices.
Since \(K_{m_2+1,m_2+1}\) contains \(K_{m_1+1,m_2+1}\), the graph
\(\ol{G'}\) is \(K_{m_2+1,m_2+1}\)-free. Applying
Lemma~\ref{lem:PS-path} with parameter \(m_2+1\) gives an \(x\)--\(y\)
path on exactly \(n\) vertices. Together with \(xy\), it forms \(C_n\),
a contradiction.
\end{proof}

\subsection{Proof of the main theorem}
Recall that \(k_0=k_0(M)\) is the sufficiently large absolute constant fixed above.
\begin{proof}[Proof of Theorem~\ref{thm:main}]

Choose the final absolute constant \(C\) sufficiently large, with \(C\ge3\).
Set \(k:=\chi(H)\) and \(h:=|H|\). If \(k=1\), then \(e(H)=0\). Hence every
graph on \(h\) vertices contains \(H\) in its complement. Together with
\eqref{eq:burr-intro}, this gives \(R(C_n,H)=h=\sigma(H)\).

Assume that \(k\ge2\). Choose a proper \(k\)-coloring of \(H\) whose smallest
color class has order \(\sigma(H)\), and relabel the color classes so that
their orders satisfy \(m_1\le\cdots\le m_k\). Adding all missing edges between
distinct color classes gives a complete \(k\)-partite supergraph
\(H'=K_{m_1,\ldots,m_k}\) with \(|H'|=h\) and \(m_1=\sigma(H)\).
It suffices to prove the upper bound for \(H'\).

If \(k=2\), the result follows from Corollary~\ref{cor:PS-bip}. If
\(3\le k<k_0\), the result follows from Theorem~\ref{thm:HHKL}, after
increasing \(C\). Hence assume that \(k\ge k_0\), and set \(m:=h/k\).

If \(m>k^{22}\), then \(m_k\ge m>k^{22}\), and the result follows from
Corollary~\ref{cor:PS-large}. Hence assume that \(m\le k^{22}\).
If \(h<2(k-1)\), enlarge one of the largest parts of \(H'\) by
\(2(k-1)-h\) vertices and denote the resulting graph by \(H''\);
otherwise, let \(H''=H'\). Then \(H'\subseteq H''\),
\(\sigma(H'')=\sigma(H)\), and \(|H''|=\max\{h,2(k-1)\}\le2h\).
Moreover, the average part order of \(H''\) lies between
\(2(k-1)/k\) and \(k^{22}\). Thus Theorem~\ref{thm:multipartite},
applied to \(H''\), gives the required upper bound after increasing
\(C\) by a factor of \(2\).

Thus every \(C_n\)-free graph on \((k-1)(n-1)+\sigma(H)\) vertices contains
\(H\) in its complement. Hence
\(R(C_n,H)\le(k-1)(n-1)+\sigma(H)\). The reverse inequality follows from
\eqref{eq:burr-intro}.
\end{proof}

\section{Concluding remarks}
Theorem~\ref{thm:main} also has the following equivalent cycle-spectrum formulation. Let $H$ be a graph with $\chi(H)=k\ge 2$, and let $G$ be an $N$-vertex graph such that $\overline G$ is $H$-free. Then $G$ contains a cycle of every length $\ell$ satisfying
\[
C|H|\le \ell\le
\left\lfloor\frac{N-\sigma(H)}{k-1}\right\rfloor+1.
\]

Our main theorem also gives a quantitative bound of the multicolor version of Ramsey goodness proved by Allen, Brightwell and Skokan~\cite{ABS}. We first recall the definitions in~\cite{ABS}. Let $H_1,\ldots,H_r$ be fixed nonempty graphs. The \textit{homomorphism Ramsey number} $R_{hom}(H_1,\ldots, H_r)$ is the smallest $N$ such that, if $F$ is any $r$-colored complete graph on $N$ vertices, there exists a color $i$ such that there is a graph homomorphism from $H_i$ into the $i$th color subgraph of $F$. Let $W=R_{\mathrm{hom}}(H_1,\ldots,H_r)-1$. We define $Z$ as the smallest natural number $N$ with the following property. For any labelled graph $F$ on $W + N$ vertices, with all edges incident to at least one of the first $W$ vertices present, if $F$ is $r$-colored and $F'$ is obtained from $F$ as the $m$-blow-up of the first $W$ vertices for some sufficiently large $m = m(H_1,\ldots, H_r)$, then there is some $i$ such that $H_i$ is contained within the $i$th color subgraph of $F'$.
With these definitions, the proof of Theorem~8 in~\cite{ABS} yields an integer $\ell\ge Z$ such that every $r$-edge-coloring of $J=K_{\ell,\ldots,\ell,Z}$, where there are $W$ parts of order $\ell$ and one part of order $Z$, contains a color-$i$ copy of $H_i$ for some $i\in[r]$. Since $\chi(J)=W+1$, $\sigma(J)=Z$, and $|J|=W\ell+Z$, Theorem~\ref{thm:main}, together with the argument in~\cite{ABS}, implies that
\[
R(C_n,H_1,\ldots,H_r)=W(n-1)+Z
\]
whenever $n\ge C|J|=C(W\ell+Z)$.

No analogous statement can hold for arbitrary sparse graphs in the first coordinate. A construction of Brandt~\cite{Brandt}, as discussed in~\cite{ABS}, shows that, for every constant $c>0$, there is a constant $\Delta=\Delta(c)$ and arbitrarily large graphs $F$ with $\Delta(F)\le \Delta$ such that $R(F,K_3)>c|F|$.
Thus sparsity, or even bounded maximum degree, does not by itself imply Ramsey goodness; some additional structural restriction on $F$ is necessary.

Long subdivisions form a natural class of sparse graphs for which a
quantitative extension may nevertheless hold. Burr~\cite{Burr} proved
that, for every fixed connected graph $Q$ and every graph $H$, every
sufficiently large subdivision of $Q$ is $H$-good. It would be
interesting to make the dependence on $H$ explicit. More precisely,
one may ask whether, for every fixed connected graph $Q$, there exists
a constant $C_Q$ such that every subdivision $F$ of $Q$ is $H$-good
whenever $|F|\ge C_Q|H|$. Theorem~\ref{thm:main} answers this question
when $Q$ is a cycle.

\section*{Declaration on the use of AI} 
The main ideas and proof strategies in this paper were developed by the authors. ChatGPT 5.6 was used to assist with optimizing numerical constants, as well as with improving the language of the manuscript. All AI-assisted suggestions were independently checked and revised by the authors, who take full responsibility for the mathematical content of the paper.


\begin{thebibliography}{99}

\bibitem{ABS}
P. Allen, G. Brightwell, J. Skokan,
Ramsey-goodness---and otherwise,
\emph{Combinatorica} \textbf{33} (2013), 125--160.

%\bibitem{Alon2025}
%N. Alon, M. Buci\'c, L. Sauermann, D. Zakharov, O. Zamir,
%Essentially tight bounds for rainbow cycles in proper edge-colourings,
%\emph{Proc. Lond. Math. Soc.} \textbf{130} (2025), no.~4,
%Paper No.~e70044.

\bibitem{BBCGHMST}
P. Balister, B. Bollob\'as, M. Campos, S. Griffiths, E. Hurley,
R. Morris, J. Sahasrabudhe, M. Tiba,
Upper bounds for multicolour Ramsey numbers,
\emph{J. Amer. Math. Soc.} \textbf{39} (2026), 765--780.

\bibitem{BPS}
I. Balla, A. Pokrovskiy, B. Sudakov,
Ramsey goodness of bounded degree trees,
\emph{Combin. Probab. Comput.} \textbf{27} (2018), 289--309.

\bibitem{BondyErdos}
J. A. Bondy, P. Erd\H{o}s,
Ramsey numbers for cycles in graphs,
\emph{J. Combin. Theory Ser. B} \textbf{14} (1973), 46--54.

\bibitem{Bradac}
D. Brada\v{c},
Nearly tight exponents for off-diagonal Ramsey numbers,
arXiv:2605.28793, 2026.

\bibitem{Brandt}
S. Brandt,
Expanding graphs and Ramsey numbers,
Preprint No.~A 96-24, Freie Universit\"at Berlin (1996).

\bibitem{Burr}
S. A. Burr,
Ramsey numbers involving graphs with long suspended paths,
\emph{J. Lond. Math. Soc.} \textbf{24} (1981), 405--413.

\bibitem{BurrErdosMagnitude}
S. A. Burr, P. Erd\H{o}s,
On the magnitude of generalized Ramsey numbers for graphs,
in \emph{Infinite and Finite Sets, Vol.~I},
Colloq. Math. Soc. J\'anos Bolyai \textbf{10},
North-Holland, Amsterdam, 1975, 215--240.

\bibitem{BurrErdos}
S. A. Burr, P. Erd\H{o}s,
Generalizations of a Ramsey-theoretic result of Chv\'atal,
\emph{J. Graph Theory} \textbf{7} (1983), 39--51.

\bibitem{CGMS}
M. Campos, S. Griffiths, R. Morris, J. Sahasrabudhe,
An exponential improvement for diagonal Ramsey,
\emph{Ann. of Math. (2)} \textbf{203} (2026), 869--932.

\bibitem{Chvatal}
V. Chv\'atal,
Tree-complete graph Ramsey numbers,
\emph{J. Graph Theory} \textbf{1} (1977), 93.

\bibitem{CH}
V. Chv\'atal, F. Harary,
Generalized Ramsey theory for graphs. III. Small off-diagonal numbers,
\emph{Pacific J. Math.} \textbf{41} (1972), 335--345.

\bibitem{CRST}
V. Chv\'atal, V. R\"odl, E. Szemer\'edi, W. T. Trotter, Jr.,
The Ramsey number of a graph with bounded maximum degree,
\emph{J. Combin. Theory Ser. B} \textbf{34} (1983), 239--243.

\bibitem{CFS}
D. Conlon, J. Fox, B. Sudakov,
Recent developments in graph Ramsey theory,
in A. Czumaj, A. Georgakopoulos, D. Kr\'al, V. Lozin, O. Pikhurko
(eds.),
\emph{Surveys in Combinatorics 2015},
London Math. Soc. Lecture Note Ser. \textbf{424},
Cambridge University Press, Cambridge, 2015, 49--118.

\bibitem{CFLS}
D. Conlon, J. Fox, C. Lee, B. Sudakov,
Ramsey numbers of cubes versus cliques,
\emph{Combinatorica} \textbf{36} (2016), 37--70.

\bibitem{Erdos1947}
P. Erd\H{o}s,
Some remarks on the theory of graphs,
\emph{Bull. Amer. Math. Soc.} \textbf{53} (1947), 292--294.

\bibitem{EFRS}
P. Erd\H{o}s, R. J. Faudree, C. C. Rousseau, R. H. Schelp,
On cycle--complete graph Ramsey numbers,
\emph{J. Graph Theory} \textbf{2} (1978), 53--64.

\bibitem{EFRS85}
P. Erd\H{o}s, R. J. Faudree, C. C. Rousseau, R. H. Schelp,
Multipartite graph--sparse graph Ramsey numbers,
\emph{Combinatorica} \textbf{5} (1985), 311--318.

\bibitem{FaudreeSchelp}
R. J. Faudree, R. H. Schelp,
All Ramsey numbers for cycles in graphs,
\emph{Discrete Math.} \textbf{8} (1974), 313--329.

\bibitem{FPGMSS}
G. Fiz Pontiveros, S. Griffiths, R. Morris, D. Saxton, J. Skokan,
The Ramsey number of the clique and the hypercube,
\emph{J. Lond. Math. Soc.} \textbf{89} (2014), 680--702.

\bibitem{FHW}
J. Fox, X. He, Y. Wigderson,
Ramsey goodness of books revisited,
\emph{Adv. Comb.} (2023), Paper No.~4, 21 pp.

\bibitem{FriedmanKrivelevich}
L. Friedman, M. Krivelevich,
Cycle lengths in expanding graphs,
\emph{Combinatorica} \textbf{41} (2021), 53--74.

\bibitem{GG}
L. Gerencs\'er, A. Gy\'arf\'as,
On Ramsey-type problems,
\emph{Ann. Univ. Sci. Budapest. E\"otv\"os Sect. Math.}
\textbf{10} (1967), 167--170.

%\bibitem{GFKKL}
%I. Gil Fern\'andez, J. Kim, Y. Kim, H. Liu,
%Nested cycles with no geometric crossings,
%\emph{Proc. Amer. Math. Soc. Ser. B} \textbf{9} (2022), 22--32.

%\bibitem{HKL}
%J. Haslegrave, J. Kim, H. Liu,
%Extremal density for sparse minors and subdivisions,
%\emph{Int. Math. Res. Not. IMRN} (2022), 15505--15548.

\bibitem{HHKL}
J. Haslegrave, J. Hyde, J. Kim, H. Liu,
Ramsey numbers of cycles versus general graphs,
\emph{Forum Math. Sigma} \textbf{11} (2023),
Paper No.~e10, 18 pp.

\bibitem{KLS}
P. Keevash, E. Long, J. Skokan,
Cycle-complete Ramsey numbers,
\emph{Int. Math. Res. Not. IMRN} (2021), 275--300.

%\bibitem{KimLiuSharifzadehStaden}
%J. Kim, H. Liu, M. Sharifzadeh, K. Staden,
%Proof of Koml\'os's conjecture on Hamiltonian subsets,
%\emph{Proc. Lond. Math. Soc.} \textbf{115} (2017), 974--1013.

\bibitem{KS1}
J. Koml\'os, E. Szemer\'edi,
Topological cliques in graphs,
\emph{Combin. Probab. Comput.} \textbf{3} (1994), 247--256.

\bibitem{KS2}
J. Koml\'os, E. Szemer\'edi,
Topological cliques in graphs II,
\emph{Combin. Probab. Comput.} \textbf{5} (1996), 79--90.

\bibitem{KrivelevichLocallySparse}
M. Krivelevich,
Finding and using expanders in locally sparse graphs,
\emph{SIAM J. Discrete Math.} \textbf{32} (2018), 611--623.

\bibitem{KrivelevichExp}
M. Krivelevich,
Expanders---how to find them, and what to find in them,
in A. Lo, R. Mycroft, G. Perarnau, A. Treglown (eds.),
\emph{Surveys in Combinatorics 2019},
London Math. Soc. Lecture Note Ser. \textbf{456},
Cambridge University Press, Cambridge, 2019, 115--142.

\bibitem{KrivelevichLongCycles}
M. Krivelevich,
Long cycles in locally expanding graphs, with applications,
\emph{Combinatorica} \textbf{39} (2019), 135--151.

\bibitem{Lee}
C. Lee,
Ramsey numbers of degenerate graphs,
\emph{Ann. of Math. (2)} \textbf{185} (2017), 791--829.

\bibitem{Letzter}
S. Letzter,
Sublinear expanders and their applications,
in F. Fischer, R. Johnson (eds.),
\emph{Surveys in Combinatorics 2024},
London Math. Soc. Lecture Note Ser. \textbf{493},
Cambridge University Press, Cambridge, 2024, 89--130.

\bibitem{LMMader}
H. Liu, R. Montgomery,
A proof of Mader's conjecture on large clique subdivisions in \(C_4\)-free graphs,
\emph{J. Lond. Math. Soc.} \textbf{95} (2017), 203--222.

\bibitem{LMOdd}
H. Liu, R. Montgomery,
A solution to Erd\H{o}s and Hajnal's odd cycle problem,
\emph{J. Amer. Math. Soc.} \textbf{36} (2023), 1191--1234.

\bibitem{MSX}
J. Ma, W. Shen, S. Xie,
An exponential improvement for Ramsey lower bounds,
\emph{Invent. Math.} (2026),
doi:10.1007/s00222-026-01421-9.

\bibitem{MontgomeryMinors}
R. Montgomery,
Logarithmically-small minors and topological minors,
\emph{J. Lond. Math. Soc.} \textbf{91} (2015), 71--88.

\bibitem{MontgomeryICM}
R. Montgomery,
Recent progress in graph theory using expansion,
in S. Friedlander, Y. Tschinkel (eds.),
\emph{Proceedings of the International Congress of Mathematicians
2026, Vol.~6: Invited Lectures (Sections 12--14)},
SIAM, 2026, 179--198.

\bibitem{MPY}
R. Montgomery, M. Pavez-Sign\'e, J. Yan,
Ramsey numbers of bounded degree trees versus general graphs,
\emph{J. Combin. Theory Ser. B} \textbf{173} (2025), 102--145.

\bibitem{MorrisICM}
R. Morris,
Some recent results in Ramsey theory,
in S. Friedlander, Y. Tschinkel (eds.),
\emph{Proceedings of the International Congress of Mathematicians
2026, Vol.~2: Plenary Lectures},
SIAM, 2026, 210--239.

\bibitem{Nikiforov}
V. Nikiforov,
The cycle-complete graph Ramsey numbers,
\emph{Combin. Probab. Comput.} \textbf{14} (2005), 349--370.

\bibitem{NikiforovRousseau}
V. Nikiforov, C. C. Rousseau,
Ramsey goodness and beyond,
\emph{Combinatorica} \textbf{29} (2009), 227--262.

\bibitem{PSPaths}
A. Pokrovskiy, B. Sudakov,
Ramsey goodness of paths,
\emph{J. Combin. Theory Ser. B} \textbf{122} (2017), 384--390.

\bibitem{PS}
A. Pokrovskiy, B. Sudakov,
Ramsey goodness of cycles,
\emph{SIAM J. Discrete Math.} \textbf{34} (2020), 1884--1908.

\bibitem{Ramsey}
F. P. Ramsey,
On a problem of formal logic,
\emph{Proc. Lond. Math. Soc.} \textbf{30} (1930), 264--286.

\bibitem{RostaI}
V. Rosta,
On a Ramsey-type problem of J. A. Bondy and P. Erd\H{o}s. I,
\emph{J. Combin. Theory Ser. B} \textbf{15} (1973), 94--104.

\bibitem{RostaII}
V. Rosta,
On a Ramsey-type problem of J. A. Bondy and P. Erd\H{o}s. II,
\emph{J. Combin. Theory Ser. B} \textbf{15} (1973), 105--120.
\end{thebibliography}
\end{document}